\DeclareMathAlphabet\EuR{U}{eur}{m}{n}
\SetMathAlphabet\EuR{bold}{U}{eur}{b}{n}
\theoremstyle{plain}
\newtheorem{theorem}{Theorem}[section]
\newtheorem{lemma}[theorem]{Lemma}
\newtheorem{proposition}[theorem]{Proposition}
\newtheorem{corollary}[theorem]{Corollary}
\theoremstyle{definition}
\newtheorem{definition}[theorem]{Definition}
\newtheorem{example}[theorem]{Example}
\newtheorem{remark}[theorem]{Remark}
\global\let\c@equation=\c@theorem}
\renewcommand{\theequation}{\thetheorem}
\newcommand{\comsquare}[8]                   
{\begin{CD}
#1 @>#2>> #3\\
@V{#4}VV @V{#5}VV\\
#6 @>#7>> #8
\end{CD}
}
\newcommand{\xycomsquare}[8]                   
{\xymatrix
{#1 \ar[r]^{#2} \ar[d]^{#4} &
#3 \ar[d]^{#5}  \\
#6\ar[r]^{#7} &
#8
}
}
\newcommand{\xycomsquareminus}[8]                      
{\xymatrix{#1 \ar[r]^-{#2} \ar[d]^-{#4} &
#3 \ar[d]^-{#5}  \\
#6\ar[r]^-{#7} &
#8
}
}
\newcommand{\aut}{\operatorname{aut}}
\newcommand{\colim}{\operatorname{colim}}
\newcommand{\Ext}{\operatorname{Ext}}
\newcommand{\Sq}{\operatorname{Sq}}
\newcommand{\higherlim}[3]{{\setbox1=\hbox{\rm lim}
        \setbox2=\hbox to \wd1{\leftarrowfill} \ht2=0pt \dp2=-1pt
        \mathop{\vtop{\baselineskip=5pt\box1\box2}}
        _{#1}}^{#2}#3}
\newcommand{\version}[1]                       
{\begin{center} last edited on #1\\
last compiled on \today\\
name of texfile: \jobname
\end{center}
}
\newcounter{commentcounter}
\newcommand{\abstract}[1]{
\begin{center}
\begin{minipage}{11.8cm}
ABSTRACT. #1
\end{minipage}	
\end{center}
}
\def \mmod{/\mkern-3mu /}
\newcommand{\nd}{\not\!|}
\newcommand{\mc}[1]{\mathcal{#1}}
\newcommand{\mb}[1]{\mathbb{#1}}
\newcommand{\mr}[1]{\mathrm{#1}}
\newcommand{\abs}[1]{\lvert #1 \rvert}
\newcommand{\bra}[1]{\langle #1 \rangle}
\newcommand{\br}[1]{\overline{#1}}
\newcommand{\td}[1]{\widetilde{#1}}
\newcommand{\ZZ}{\mathbb{Z}}
\newcommand{\CC}{\mathbb{C}}
\newcommand{\QQ}{\mathbb{Q}}
\newcommand{\FF}{\mathbb{F}}
\newcommand{\GG}{\mathbb{G}}
\newcommand{\MS}{\mathbb{S}}
\newcommand{\AF}{\mathbb{A}}
\newcommand{\PP}{\mathbb{P}}
\newcommand{\TAF}{\mathrm{TAF}}
\newcommand{\TMF}{\mathrm{TMF}}
\newcommand{\Tmf}{\mathrm{Tmf}}
\newcommand{\tmf}{\mathrm{tmf}}
\newcommand{\bo}{\mathrm{bo}}
\newcommand{\MU}{\mathrm{MU}}
\newcommand{\MUP}{\mathrm{MUP}}
\DeclareMathOperator*{\holim}{holim}
\begin{document}

\frontmatter


\setcounter{page}{1} 

\mainmatter


\renewcommand{\leftmark}{Mark Behrens}

\chapterauthoronly{Mark Behrens}{}

\chapter{Topological modular and automorphic forms}

\renewcommand{\theequation}{\thesection.\arabic{equation}}

\section{Introduction}

The spectrum of topological modular forms (TMF)\index{topological modular forms} was first introduced by Hopkins and Miller \cite{HopkinsMiller}, \cite{HopkinsTMFI}, \cite{HopkinsTMFII}, and Goerss and Hopkins constructed it as an $E_\infty$ ring spectra (see \cite{Behrenstmf}).  Lurie subsequently gave a conceptual approach to $\TMF$ using his theory of spectral algebraic geometry \cite{Lurie}.  Lurie's construction relies on a general theorem \cite{LurieI}, \cite{LurieII}, which was used by the author and Lawson to construct spectra of topological automorphic forms (TAF) \cite{taf}.  

The goal of this article is to give an accessible introduction to the $\TMF$ and $\TAF$ spectra.
Besides the articles mentioned above, there already exist many excellent such surveys  (see \cite{HopkinsMahowald}, \cite{Rezk}, \cite{Lawsonsurvey}, \cite{Goerss}, \cite{Goersstmf}, \cite{tmf}).  Our intention is to give an account which is somewhat complementary to these existing surveys.  We assume the reader knows about the stable homotopy category, and knows some basic algebraic geometry, and attempt to give the reader a concrete understanding of the big picture while deemphasizing many of the technical details.  Hopefully, the reader does not find the inevitable sins of omission to be too grievous.

In Section~\ref{sec:elliptic} we recall the definition of a complex orientable ring spectrum $E$, and its associated formal group law $F_E$.  We then explain how an algebraic group $G$ also gives rise to a formal group law $F_G$, and define elliptic cohomology theories to be complex orientable cohomology theories whose formal group laws arise from elliptic curves.  We explain how a theorem of Landweber proves the existence of certain \emph{Landweber exact} elliptic cohomology theories.

We proceed to define topological modular forms in Section~\ref{sec:tmf}.  We first begin by recalling the definition of classical modular forms as sections of powers of a certain line bundle on the compactification $\br{\mc{M}}_{ell}$ of the moduli stack $\mc{M}_{ell}$ of elliptic curves.  Then we state a theorem of Goerss-Hopkins-Miller, which states that there exists a sheaf of $E_\infty$ ring spectra $\mc{O}^{top}$ on the \'etale site of $\br{\mc{M}}_{ell}$ whose sections over an affine
$$ \mr{spec}(R) \xrightarrow[\mr{etale}]{C} \mc{M}_{ell} $$
recover the Landweber exact elliptic cohomology theory associated to the elliptic curve $C$ it classifies.  The spectrum $\Tmf$ is defined to be the global sections of this sheaf:
$$ \Tmf := \mc{O}^{top}(\br{\mc{M}}_{ell}). $$
We compute $\pi_*\Tmf[1/6]$, and use that to motivate the definition of connective topological modular forms ($\tmf$) as the connective cover of $\Tmf$, and periodic topological modular forms ($\TMF$) as the sections over the non-compactified moduli stack: 
$$ \TMF := \mc{O}^{top}(\mc{M}_{ell}). $$ 

The homotopy groups of $\TMF$ at the primes $2$ and $3$ are more elaborate.  While we do not recount the details of these computations, we do indicate the setup in Section~\ref{sec:pitmf}, and state the results in a form that we hope is compact and understandable.  The computation of $\pi_*\Tmf_{(p)}$ for $p = 2,3$ is then discussed in Section~\ref{sec:piTmf}.  We introduce the notion of the height of a formal group law, and use it to create a computable cover of the compactified moduli stack $(\br{\mc{M}}_{ell})_{\ZZ_{(p)}}$. 
By taking connective covers, we recover the homotopy of groups of $\tmf_{(p)}$.

In Section~\ref{sec:chromatic}, we go big picture.  We explain how complex cobordism associates to certain ring spectra $E$ a stack
$$ \mc{X}_E \rightarrow \mc{M}_{fg} $$
over the moduli stack of formal group laws.  The sheaf $\mc{O}^{top}$ serves as a partial inverse to $\mc{X}_{(-)}$, in the sense that where it is defined,
we have
\begin{align*}
\mc{O}^{top}(\mc{X}_E) & \simeq E, \\
\mc{X}_{\mc{O}^{top}(\mc{U})} & \simeq \mc{U}.
\end{align*} 
We describe the height filtration of the moduli stack of formal groups $\mc{M}_{fg}$, and explain how chromatic localizations of the sphere realize this filtration in topology.  The stacks associated to chromatic localizations of a ring spectrum $E$ are computed by pulling back the height filtration to $\mc{X}_E$.  We then apply this machinery to $\Tmf$ to compute its chromatic localizations, and explain how chromatic fracture is closely connected to the approach to $\pi_*\Tmf$ discussed in Section~\ref{sec:piTmf}.

We then move on to discuss Lurie's theorem, which expands $\mc{O}^{top}$ to the \'etale site of the moduli space of \emph{$p$-divisible groups}.  After recalling the definition, we state Lurie's theorem, and explain how his theorem simultaneously recovers the Goerss-Hopkins-Miller theorem on Morava $E$-theory, and the Goerss-Hopkins-Miller sheaf $\mc{O}^{top}$ on $\mc{M}_{ell}$.  We then discuss a class of moduli stacks of Abelian varieties (\emph{PEL Shimura stacks of type $U(1,n-1)$}) which give rise to spectra of topological automorphic forms \cite{taf}.

There are many topics which should have appeared in this survey, but regrettably do not, such as the Witten orientation, the connection to $2$-dimensional field theories, spectral algebraic geometry, and equivariant elliptic cohomology, to name a few.  We compensate for this deficiency in Section~\ref{sec:further} with a list of such topics, and references to the literature for further reading.

\subsection*{Acknowledgments}

The author would like to thank Haynes Miller, for soliciting this survey, as well as Sanath Devalapurkar, Lennart Meier, John Rognes, Taylor Sutton, and Markus Szymik for valuable suggestions and corrections.  The author was partially supported from a grant from the National Science Foundation.

\section{Elliptic cohomology theories}\label{sec:elliptic}

\subsection*{Complex orientable ring spectra}

Let $E$ be a (homotopy associative, homotopy commutative) ring spectrum.  

\begin{definition}
A \emph{complex orientation} of $E$ is an element
$$ x \in \td{E}^*(\CC P^\infty) $$
such that the restriction
$$ x\vert_{\CC P^1} \in \td{E}^*(\CC P^1) $$
is a generator (as an $E_*$-module).
A ring spectrum which admits a complex orientation is called \emph{complex orientable}\index{complex orientatable}.
\end{definition}

For complex oriented ring spectra $E$, the Atiyah-Hirzebruch spectral sequence collapses to give
\begin{align}
E^*(\CC P^\infty) & = E_*[[x]], \label{eq:ECP}\\
E^*(\CC P^\infty \times \CC P^\infty) & = E_*[[x_1, x_2]]
 \end{align}
where $x_i$ is the pullback of $x$ under the $i$th projection.

Consider the map
$$ \mu: \CC P^\infty \times \CC P^\infty \rightarrow \CC P^\infty $$
which classifies the universal tensor product of line bundles.
Quillen \cite{Quillen} (see also \cite{Adams}) observed that 
because $\mu$ gives $\CC P^\infty$ the structure of a homotopy commutative, homotopy associative $H$-space, the power series
$$ F_E(x_1, x_2) := \mu^*x \in E_*[[x_1, x_2]] $$
is a (commutative, $1$ dimensional) \emph{formal group law}\index{formal group law} over $E_*$, in the sense that it satisfies
\begin{enumerate}
\item $F_E(x, 0) = F_E(0,x) = x,$ 
\item $F_E(x_1, F_E(x_2, x_3)) = F_E(F_E(x_1, x_2), x_3),$
\item $F_E(x_1, x_2) = F_E(x_2, x_1)$.
\end{enumerate}

\begin{example}\label{ex:add} 
Let $E = H\ZZ$, the integral Eilenberg-MacLane spectrum.  Then the complex orientation $x$ is a generator of $H^2(\CC P^\infty)$, and 
$$ F_{H\ZZ}(x_1, x_2) = x_1 + x_2. $$
This is the \emph{additive formal group law} $F_{\rm{add}}$.
\end{example}

\begin{example}\label{ex:mult}
Let $E = KU$, the complex $K$-theory spectrum.  Then the class
$$ x := [L_{\mr{can}}] - 1 \in \td{KU}^0(\CC P^\infty) $$
(where $L_{\mr{can}}$ is the canonical line bundle) gives a complex orientation for $KU$, and
$$ F_{KU}(x_1, x_2) = x_1 + x_2 + x_1x_2. $$
This is the \emph{multiplicative formal group law} $F_{\rm{mult}}$.
\end{example}

Example (\ref{ex:mult}) above is an example of the following.

\begin{definition}
An \emph{even periodic} ring spectrum\index{even periodic ring spectrum} is a ring spectrum $E$ so that
\begin{equation}\label{eq:odd} \pi_{\mr{odd}}E = 0 \end{equation}
and such that $E_{2}$ contains a unit.
\end{definition}

It is easy to see using a collapsing Atiyah-Hirzebruch spectral sequence argument that (\ref{eq:odd}) is enough to guarantee the complex orientability of an even periodic ring spectrum $E$.  The existence of the unit in $E_2$ implies one can take the complex orientation to be a class
$$ x \in \td{E}^0(\CC P^\infty) $$
giving
\begin{equation}\label{eq:E0CP} 
E^0(\CC P^\infty) = E_0[[x]].
\end{equation}
It follows that in the even periodic case, for such choices of complex orientation, we can regard the formal group law $F_E$ as a formal group law over $E_0$, and (\ref{eq:E0CP}) can be regarded as saying
$$ E^0(\CC P^\infty) = \mc{O}_{F_{E}}. $$
where the latter is the ring of functions on the formal group law.  Then it follows that we have a canonical identification
$$ E_2 = \td{E}^0(\CC P^1) = (x)/(x)^2 = T^*_0F_E $$
Here $(x)$ is the ideal generated by $x$ in $E^0(\CC P^\infty)$ and $T^*_0F_E$ is the cotangent space of $F_E$ at $0$.
The even periodicity of $E$ then gives
\begin{equation}\label{eq:E2i}
E_{2i} \xleftarrow{\cong} E_2^{\otimes_{E_0} i} = (T^*_0F_E)^{\otimes i}.
\end{equation}
Here (\ref{eq:E2i}) even makes sense for $i$ negative: since $E_2$ is a free $E_0$-module of rank $1$, it is invertible (in an admittedly trivial manner), and $T^*_0F_E$ is invertible since it is a line bundle over $\mr{spec}(E_0)$.

\subsection*{Formal groups associated to algebraic groups}

Formal group laws also arise in the context of algebraic geometry.  Let $G$ be a $1$-dimensional commutative algebraic group over a commutative ring $R$.  If the line bundle $T_eG$ (over $\mr{spec} (R)$) is trivial, there exists a coordinate $x$ of $G$ at the identity $e \in G$.  We shall call such group schemes \emph{trivializable}\index{trivializable group scheme}.  In this case the group structure
$$ G \times G \rightarrow G $$
can be expressed locally in terms of the coordinate $x$ as a power series
$$ F_G(x_1, x_2) \in R[[x_1, x_2]]. $$
The unitality, associativity, and commutativity of the group structure on $G$ makes $F_G$ a formal group law over $R$.  The formal groups in Examples~\ref{ex:add} and \ref{ex:mult} arise in this manner from the additive and multiplicative groups $\GG_a$ and $\GG_m$ (defined over $\ZZ$) by making appropriate choices of coordinates:
\begin{align*}
F_{\GG_a} & = F_{add}, \\
F_{\GG_m} & = F_{mult}.
\end{align*}

It turns out that if we choose different coordinates/complex orientations, we will still get isomorphic formal group laws.
A \emph{homomorphism}\index{homomorphism, of formal group laws} $f: F \rightarrow F'$ of formal group laws over $R$ is a formal power series
$$ f(x) \in R[[x]] $$
satisfying
$$ f(F(x_1, x_2)) = F'(f(x_1), f(x_2)). $$
If the power series $f(x)$ is invertible (with respect to composition) then we say that it is an isomorphism.
Clearly, choosing a different coordinate on a trivializable commutative $1$-dimensional algebraic group gives an isomorphic formal group law.  One similarly has the following proposition.

\begin{proposition}
Suppose that $x$ and $x'$ are two complex orientations of a complex orientable ring spectrum $E$, with corresponding formal group laws $F$ and $F'$.  Then there is a canonical isomorphism between $F$ and $F'$.
\end{proposition}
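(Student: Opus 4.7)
The plan is to use the first orientation $x$ as a coordinate on $E^*(\CC P^\infty)$ and express the second orientation $x'$ as a power series in $x$; that power series will be the desired isomorphism.

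First I would invoke (\ref{eq:ECP}) to identify $E^*(\CC P^\infty) = E_*[[x]]$. Since $x' \in \td{E}^*(\CC P^\infty)$, it is a power series in $x$ with zero constant term, say
$$ f(x) := x' = a_1 x + a_2 x^2 + a_3 x^3 + \cdots \in E_*[[x]]. $$
The hypothesis that $x'$ restricts to an $E_*$-module generator of $\td{E}^*(\CC P^1) = (x)/(x)^2$ forces $a_1 \in E_*$ to be a unit. This already shows $f(x)$ is invertible under composition (one can solve for the compositional inverse coefficient by coefficient, since $a_1$ is a unit), so $f$ will automatically be an isomorphism of formal group laws once we know it is a homomorphism.

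The second step is to check the homomorphism property, which is really just naturality of $\mu^*$. Let $x_i$ and $x'_i$ denote the pullbacks of $x$ and $x'$ under the two projections $\CC P^\infty \times \CC P^\infty \to \CC P^\infty$; by naturality, $x'_i = f(x_i)$ in $E^*(\CC P^\infty \times \CC P^\infty) = E_*[[x_1,x_2]]$. Applying $\mu^*$ to the equation $x' = f(x)$ and using $\mu^* x = F(x_1,x_2)$ and $\mu^* x' = F'(x'_1,x'_2)$ gives
$$ F'(f(x_1), f(x_2)) = \mu^* f(x) = f(\mu^* x) = f(F(x_1, x_2)), $$
which is precisely the condition that $f$ is a homomorphism from $F$ to $F'$.

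The argument is essentially bookkeeping, and I don't expect any real obstacle; the one subtle point worth flagging is the word \emph{canonical}, which is justified because $f$ is constructed purely from the two orientations without any auxiliary choices — it is the unique power series expressing $x'$ in terms of $x$ under the identification (\ref{eq:ECP}).
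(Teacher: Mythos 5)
Your proposal is correct and follows exactly the paper's (much terser) argument: write $x' = f(x)$ using the identification $E^*(\CC P^\infty) = E_*[[x]]$ and verify that $f$ is an isomorphism from $F$ to $F'$. You have simply filled in the details the paper leaves to the reader, namely that the unit leading coefficient makes $f$ compositionally invertible and that naturality of $\mu^*$ gives the homomorphism identity $f(F(x_1,x_2)) = F'(f(x_1),f(x_2))$.
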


\begin{proof}
Using (\ref{eq:ECP}), we deduce that $x' = f(x) \in E_*[[x]]$.  It is a simple matter to use the resulting change of coordinates to verify that $f$ is an isomorphism from $F$ to $F'$. 
\end{proof}

The only $1$-dimensional connected algebraic groups over an algebraically closed field are $\GG_a$, $\GG_m$, and elliptic curves.  As we have shown that there are complex orientable ring spectra which yield the formal groups of the first two, it is reasonable to consider the case of elliptic curves.

\subsection*{The elliptic case}

\begin{definition}[\cite{AHS}]
An \emph{elliptic cohomology theory}\index{elliptic cohomology theory} consists of a triple
$$ (E,C,\alpha) $$
where
\begin{align*}
E & = \text{an even periodic ring spectrum},\\
C & = \text{a trivializable elliptic curve over $E_0$}, \\
(\alpha: F_C \xrightarrow{\cong} F_E) & = \text{an isomorphism of formal group laws}.
\end{align*} 
\end{definition}

\begin{remark}
Note that every elliptic curve $C$ which admits a Weierstrass presentation
$$ C: y^2 + a_1xy + a_3 y = x^3 + a_2 x^2 + a_4 x + a_6 $$
is trivializable, since $z = x/y$ is a coordinate at $e \in C$.  
\end{remark}

For an elliptic cohomology theory $(E, C, \alpha)$,  the map $\alpha_*$ gives an isomorphism
\begin{equation*}
 T_e^*C \cong T^*_0F_C \xleftarrow[\cong]{\alpha^*} T^*_0F_E.
\end{equation*}
It follows that we have a canonical isomorphism
\begin{equation}\label{eq:TC}
E_{2i} \cong (T^*_eC)^{\otimes i}.
\end{equation}

It is reasonable to ask when elliptic cohomology theories exist.  This was first studied by Landweber, Ravenel, and Stong \cite{LRS} using the Landweber Exact Functor Theorem \cite{Landweber}.  Here we state a reformulation of this theorem which appears in \cite{Naumann} (this perspective originates with Franke \cite{Franke} and Hopkins \cite{coctalos}).

\begin{theorem}[Landweber Exact Functor Theorem]\label{thm:LEFT}\index{Landweber exact functor theorem} Suppose that $F$ is a formal group law over $R$ whose classifying map
$$ \mr{spec}(R) \xrightarrow{F} \mc{M}_{fg} $$
to the moduli stack of formal groups is flat.  Then there exists a unique (in the homotopy category of ring spectra) even periodic ring spectrum $E$ with $E_0 = R$ and $F_E \cong F$.  
\end{theorem}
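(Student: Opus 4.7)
The plan is to reduce the statement to the classical form of the Landweber exact functor theorem (for $MU_*$-modules), via the dictionary between flat maps to $\mathcal{M}_{fg}$ and Landweber exact modules over the Lazard ring. Since $MU_*$ is the Lazard ring and $MU_*MU$ corepresents strict isomorphisms of formal group laws, quasi-coherent sheaves on $\mathcal{M}_{fg}$ correspond to $(MU_*,MU_*MU)$-comodules (after passing to the stack quotient $\mathrm{spec}(MU_*)\mathbin{/\!/}\mathbb{G}$ where $\mathbb{G}$ is the group scheme of strict isomorphisms; to eliminate the strictness and work with the honest stack $\mathcal{M}_{fg}$, one replaces $MU$ by its periodic version $MUP$). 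Under this correspondence, the datum of the formal group law $F$ over $R$ provides a ring map $MUP_0 \to R$ classifying $F$, and the flatness of $\mathrm{spec}(R) \to \mathcal{M}_{fg}$ is equivalent to $R$ being a flat $(MUP_0,MUP_0MUP)$-comodule.

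Next I would define the candidate homology theory by
$$ E_n(X) := MUP_n(X) \otimes_{MUP_0} R, $$
and verify the Eilenberg-Steenrod axioms. The wedge and homotopy invariance axioms are automatic; the sole nontrivial point is exactness of the long exact sequence of a cofibration, which reduces to exactness of the functor $MUP_*(-) \otimes_{MUP_0} R$ on the category of $(MUP_0,MUP_0MUP)$-comodules. This is precisely Landweber's algebraic criterion, and the translation from ``flat quasi-coherent sheaf on $\mathcal{M}_{fg}$'' to ``exact tensor functor on comodules'' is the content of the Franke--Hopkins--Naumann reformulation. I expect this translation to be the principal technical obstacle: one needs the description of $\mathcal{M}_{fg}$ stratified by height, together with the observation that the closed substacks $\mathcal{M}_{fg}^{\geq n}$ are cut out locally by the regular sequences $(p,v_1,v_2,\ldots,v_{n-1})$, so that flatness of the pullback to $\mathrm{spec}(R)$ is equivalent to each $v_n$ acting as a non-zero-divisor on $R/(p,v_1,\ldots,v_{n-1})R$, which is exactly Landweber's condition.

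Once exactness is established, Brown representability furnishes a spectrum $E$ representing this homology theory, and the resulting $E$ is a ring spectrum in the homotopy category since the functor $MUP_*(-) \otimes_{MUP_0} R$ carries the ring structure on $MUP$ to one on $E$. Even periodicity follows from the identifications $E_0 = R$ and $E_2 = MUP_2 \otimes_{MUP_0} R$, the latter being free of rank one over $R$ by construction (periodicity of $MUP$); the formal group law $F_E$ is $F$ by tracing through the complex orientation inherited from $MUP$. Finally, uniqueness in the homotopy category: any other even periodic ring spectrum $E'$ with $E'_0 = R$ and $F_{E'} \cong F$ carries a canonical map from $MUP$ classifying its orientation, and the resulting natural transformation $MUP_*(-) \otimes_{MUP_0} R \to E'_*(-)$ is a map of homology theories that is an isomorphism on a point, hence an equivalence; Brown representability then provides the unique homotopy equivalence $E \simeq E'$.
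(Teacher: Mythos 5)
The paper does not prove this theorem; it states it with citations to Landweber, Naumann, Franke, and Hopkins, so there is no in-text argument to compare against. Your proposal is a faithful reconstruction of the standard proof found in those sources: pass to $\MUP$ to absorb the strict-isomorphism issue, identify flatness of $\mr{spec}(R) \to \mc{M}_{fg}$ with exactness of $\MUP_*(-) \otimes_{\MUP_0} R$ on comodules via the height stratification and the regular sequences $(p, v_1, \ldots, v_{n-1})$ (Landweber's filtration theorem), and then represent the resulting homology theory by Brown representability. The overall architecture is correct.

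Two steps deserve more care than you give them. First, Brown representability produces $E$ only as a spectrum, and your one-sentence claim that the ring structure on $\MUP$ ``carries over'' is not automatic: to realize the evident multiplication on the homology theory by an actual map $E \wedge E \to E$ in the homotopy category you must know that the universal-coefficient map $E^0(E \wedge E) \to \mathrm{Hom}_{E_0}(E_0(E \wedge E), E_0)$ is surjective onto the product, i.e.\ that there is no obstruction from phantom maps or ${\lim}^1$ terms. This is where one uses that $E_*(E \wedge E)$, computed as an iterated tensor product of $E_*$ with copies of $\MUP_0\MUP$ over $\MUP_0$, is concentrated in even degrees and flat (equivalently, the Hovey--Strickland result that there are no phantom maps out of Landweber exact spectra); associativity and commutativity follow by the same argument. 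Second, the uniqueness you establish is uniqueness as a spectrum, whereas the statement asserts uniqueness in the homotopy category of \emph{ring} spectra: you should check that the comparison equivalence $E \to E'$ is multiplicative, which again follows from the same evenness considerations since both products restrict from the one on $\MUP \wedge \MUP$. Note also that the complex orientation of $E'$ classifies a formal group law only \emph{isomorphic} to $F$, so the ring map $\MUP_0 \to R$ must be adjusted by that isomorphism before the two homology theories can be compared. Neither point is a wrong turn, but both are precisely the details the cited references exist to supply.
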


\begin{corollary}\label{cor:Landweber}
Suppose that $C$ is a trivializable elliptic curve over $R$ whose associated formal group law $F_C$ satisfies the hypotheses of the Landweber exact functor theorem.  Then there exists an elliptic cohomology theory $E_C$ associated to the elliptic curve $C$.
\end{corollary}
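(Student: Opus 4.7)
The plan is to invoke the Landweber Exact Functor Theorem directly and unpack its conclusion into the triple $(E,C,\alpha)$ required by the definition of an elliptic cohomology theory.

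First, I would apply Theorem \ref{thm:LEFT} to the formal group law $F_C$ over $R$. By hypothesis, its classifying map $\mr{spec}(R) \xrightarrow{F_C} \mc{M}_{fg}$ is flat, so the theorem produces an even periodic ring spectrum $E$, unique up to homotopy, equipped with an identification $E_0 = R$ and an isomorphism $\alpha: F_C \xrightarrow{\cong} F_E$ of formal group laws over $E_0$.

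Second, I would set $E_C := E$ and observe that, since $E_0 = R$, the elliptic curve $C$ over $R$ is tautologically an elliptic curve over $E_0$, and the trivializability of $T_eC$ as a line bundle over $\mr{spec}(R) = \mr{spec}(E_0)$ is the same condition in either guise. The triple $(E_C, C, \alpha)$ then satisfies all three clauses of the definition of an elliptic cohomology theory.

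There is no substantive obstacle: the corollary is a repackaging of Theorem \ref{thm:LEFT}, once one recognizes that the formal group law $F_C$ attached to the trivializable elliptic curve $C$ is itself a formal group law over $R$ to which the Landweber theorem applies. All the genuine content — constructing the spectrum $E$ from $F$ and establishing uniqueness up to homotopy — is absorbed into Theorem \ref{thm:LEFT}, whose proof via the flatness characterization of Landweber exactness is not addressed at this point in the text.
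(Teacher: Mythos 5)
Your proof is correct and is exactly the intended argument: the corollary is an immediate repackaging of Theorem~\ref{thm:LEFT}, applying it to $F_C$ to obtain the even periodic spectrum $E$ with $E_0 = R$ and an isomorphism $\alpha: F_C \cong F_E$, which together with $C$ itself forms the required triple. The paper treats this as immediate and offers no separate proof, so there is nothing to compare beyond noting that you have spelled out the unpacking carefully and correctly.
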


\begin{remark}
For us, a \emph{stack}\index{stack} is a functor
$$ \mr{Rings}^{op} \rightarrow \mr{Groupoids} $$
which satisfies a descent condition with respect to a given Grothendieck topology.   The moduli stack of formal groups $\mc{M}_{fg}$ associates to a ring $R$ the groupoid whose objects are formal group schemes over $R$ which are Zariski locally (in $\mr{spec}(R)$) isomorphic to the formal affine line $\widehat{\mb{A}}^1$, and whose morphisms are the isomorphisms of such. 
\end{remark}

\begin{remark}
While Landweber's original formulation of the exactness criterion may be less conceptual than that given above, it is much easier to check in practice.
\end{remark}

The problem with Landweber's theorem is that while it gives a functor
$$ \{ \text{Landweber flat formal groups} \} \rightarrow \mr{Ho}(\mr{Spectra}), $$
this functor does not refine to a point-set level functor to spectra.  

\section{Topological modular forms}\label{sec:tmf}

\subsection*{Classical modular forms}

Let $\mc{M}_{ell}$ denote the moduli stack of elliptic curves (over $\mr{spec}(\ZZ)$).\index{moduli stack of elliptic curves}  It is the stack whose $R$-points is the groupoid of elliptic curves over $R$ and isomorphisms.  Consider the line bundle $\omega$ on $\mc{M}_{ell}$ whose fiber over an elliptic curve $C$ is given by the cotangent space at the identity
$$ \omega_C = T^*_eC. $$ 
The moduli stack of elliptic curves $\mc{M}_{ell}$ admits a compactification $\br{\mc{M}}_{ell}$ \cite{DeligneRapoport} where we allow our elliptic curves to degenerate to singular curves in the form of \emph{N\'eron $n$-gons}.  The line bundle $\omega$ extends over this compactification.
The space of (integral) modular forms of weight $k$ is defined to be the global sections (see \cite{Katz})\index{modular form}
\begin{equation}\label{eq:MF}
 MF_k := H^0(\br{\mc{M}}_{ell}, \omega^{\otimes k}).
\end{equation}

The complex points $\mc{M}_{ell}(\CC)$ admit a classical description (see, for example, \cite{SilvermanII}).  Let $\mc{H} \subseteq \CC$ denote the upper half plane.  Then we can associate to a point $\tau \in \mc{H}$ an elliptic curve $C_\tau$ over $\CC$ by defining
$$ C_\tau := \CC/(\ZZ + \ZZ\tau). $$
Every elliptic curve over $\CC$ arises this way.  Let $SL_2(\ZZ)$ act on $\mc{H}$ through M\"obius transformations:
$$ \begin{pmatrix}
a & b \\
c & d
\end{pmatrix}
\cdot \tau = \frac{a\tau + b}{c \tau + d}.
$$
Two such elliptic curves $C_\tau$ and $C_{\tau'}$ are isomorphic if and only if 
$\tau' = A\cdot \tau$ for some $A$ in $SL_2(\ZZ)$.  It follows that 
$$ \mc{M}_{ell}(\CC) = \mc{H} \mmod SL_2(\ZZ). $$
In this language, a modular form $f \in MF_k$ can be regarded as a meromorphic function on $\mc{H}$ which satisfies
\begin{equation*}
 f(\tau) = (c\tau+d)^{-k}f(A\cdot \tau)
 \end{equation*}
for every
$$ A = \begin{pmatrix} a & b \\ c & d \end{pmatrix} \in SL_2(\ZZ). $$
The condition of extending over the compactification $\br{\mc{M}}_{ell}$ can be expressed over $\CC$ by requiring that the Fourier expansion (a.k.a. \emph{$q$-expansion})\index{q-expansion}
$$ f(q) = \sum_{i\in \ZZ} a_i q^i \quad (q := e^{2\pi i \tau})$$
satisfies $a_i = 0$ for $i < 0$ (a.k.a. ``holomorphicity at the cusp'').

\subsection*{The Goerss-Hopkins-Miller sheaf}

The following major result of Goerss-Hopkins-Miller \cite{HopkinsMiller}, \cite{Behrenstmf}
gives a topological lift of the sheaf $\bigoplus_i \omega^{\otimes i}$.

\begin{theorem}[Goerss-Hopkins-Miller]\label{thm:GHM}
There is a homotopy sheaf of $E_\infty$-ring spectra $\mc{O}^{top}$\index{$\mc{O}^{top}$} on the \'etale site of $\br{\mc{M}}_{ell}$ with the property that the spectrum of sections
$$ E_C := \mc{O}^{top}(\mr{spec}(R) \xrightarrow{C} \mc{M}_{ell}) $$
associated to an \'etale map $\mr{spec}(R) \rightarrow \mc{M}_{ell}$ classifying a trivializable elliptic curve $C/R$ is an elliptic cohomology theory for the elliptic curve $C$.
\end{theorem}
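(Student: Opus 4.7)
The plan is to produce $\mc{O}^{top}$ by rigidifying the homotopy-level elliptic cohomology theories supplied by Corollary~\ref{cor:Landweber} into $E_\infty$-ring spectra via obstruction theory, and then sheafifying. For any étale map $\mr{spec}(R) \xrightarrow{C} \mc{M}_{ell}$, the composite to $\mc{M}_{fg}$ classifying $F_C$ is flat (since $\mc{M}_{ell} \to \mc{M}_{fg}$ is), so Theorem~\ref{thm:LEFT} already yields a homotopy-commutative elliptic cohomology theory $E_C$, unique up to non-canonical isomorphism. The three remaining tasks are (a) to lift each $E_C$ functorially to an $E_\infty$-ring, (b) to verify that this assignment satisfies étale descent, and (c) to extend the resulting sheaf across the cusps of $\br{\mc{M}}_{ell}$.

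For (a) I would first handle the formal completions at closed points of $(\mc{M}_{ell})_{\FF_p}$. An elliptic curve $C_0$ over an algebraically closed field of characteristic $p$ has formal group of height $1$ (ordinary) or $2$ (supersingular), and the Lubin-Tate universal deformation identifies the Landweber-exact spectrum over the formal neighborhood with a Morava $E$-theory $E_n$, $n\le 2$. The Goerss-Hopkins-Miller theorem for Morava $E$-theories supplies an essentially unique $E_\infty$-structure on each such $E_n$ and, crucially, identifies the derived space $\mathrm{Map}_{E_\infty}(E_n, E_n)$ with the discrete space of automorphisms of the underlying deformation of formal groups. This is the local input I would rely on.

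The heart of the argument is an obstruction-theoretic extension from these formal neighborhoods to the whole étale site of $\mc{M}_{ell}$. For a given homotopy-commutative spectrum $E_C$, the moduli space of $E_\infty$-structures on $E_C$ (and, similarly, the moduli space of $E_\infty$-lifts of a given homotopy ring map) has a Postnikov tower whose layers are controlled by André-Quillen cohomology groups of the form $H^{s}_{\mathrm{AQ}}\bigl(E_C ; \pi_\ast E_C[t]\bigr)$, with coefficients in modules built from the cotangent complex of $F_C$. I would compute these groups by reducing to the deformation theory of the formal group $F_C$ and show that they vanish in the bidegrees where obstructions to existence and to uniqueness-up-to-contractible-choice could live. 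This vanishing computation is the main obstacle; everything else is formal. Combined with Artin approximation, to pass from formal to étale neighborhoods, and with a direct descent check on these moduli spaces, it would assemble into a sheaf of $E_\infty$-rings on the étale site of the open moduli stack $\mc{M}_{ell}$.

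For (c) I would treat the cusps separately using the Tate curve. A formal neighborhood of the cuspidal locus of $\br{\mc{M}}_{ell}$ classifies the Tate curve, whose formal group is $\widehat{\GG}_m$, so the required $E_\infty$-lift there is furnished by a Tate $K$-theory $K^{\mathrm{Tate}}$, constructed $E_\infty$-structurally from $KU$. Gluing $K^{\mathrm{Tate}}$ to the sheaf already built on $\mc{M}_{ell}$ along the punctured formal neighborhood of the cusp (where both sides agree, by the height $1$ case of the obstruction calculation) extends $\mc{O}^{top}$ to all of $\br{\mc{M}}_{ell}$. That sections over an étale affine classifying a trivializable $C$ recover the Landweber exact elliptic cohomology theory for $C$ is then automatic, since by construction $\pi_\ast E_C$ agrees with the output of Theorem~\ref{thm:LEFT} applied to $F_C$.
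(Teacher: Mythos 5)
Before anything else: the paper does not prove Theorem~\ref{thm:GHM} — it quotes the result from \cite{HopkinsMiller} and \cite{Behrenstmf} — so your proposal can only be measured against the construction in those references. At that level your sketch assembles the right raw ingredients: Landweber exactness for the homotopy-level theories, Goerss--Hopkins--Miller rigidity of Morava $E$-theory as the local input at supersingular points, an obstruction theory for $E_\infty$-structures controlled by Andr\'e--Quillen cohomology, and Tate $K$-theory at the cusp. But the step you yourself flag as ``the main obstacle'' is exactly where the argument, as you have set it up, does not go through.

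The gap is in (a)/(b). You propose to run obstruction theory directly on each Landweber spectrum $E_C$ for an arbitrary \'etale affine over $\mc{M}_{ell}$, claiming the obstruction groups vanish ``by reducing to the deformation theory of $F_C$.'' No such reduction is available: over a general base $F_C$ is not a universal deformation, the relevant Andr\'e--Quillen groups do not vanish for formal reasons, and the moduli space of $E_\infty$-structures on $E_C$ is not homotopy discrete. This is precisely why the actual construction does not proceed one affine at a time. Instead the sheaf is built by arithmetic and chromatic fracture: $K(2)$-locally it comes from Serre--Tate theory together with the Goerss--Hopkins--Miller theorem for $E_2$ (Galois descent along the Lubin--Tate cover of the supersingular locus); $K(1)$-locally one needs a separate $\theta$-algebra obstruction theory whose obstruction groups are computed from the moduli of ordinary elliptic curves and, at $p=2$, are not all zero, so a genuine computation and a choice of lift are required; the rational sheaf is elementary; and these pieces are glued along chromatic and arithmetic fracture squares. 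Your appeal to ``Artin approximation'' plus ``a direct descent check'' also buries the second real difficulty: rigidifying a diagram of $E_\infty$-rings defined only up to coherent homotopy into an honest presheaf on the \'etale site is a statement about the homotopy limit of moduli spaces over the whole site, controlled by a Bousfield--Kan/$\lim^1$-type spectral sequence whose contributions must be shown to vanish; it is not a pointwise assertion. Your treatment of the cusp in (c) via the Tate curve and an $E_\infty$ form of $K[[q]]$ is essentially correct and matches \cite{Behrenstmf} and the later log-\'etale extension of \cite{HillLawson}.
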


\begin{remark}
Since the map
\begin{gather*}
\mc{M}_{ell} \rightarrow \mc{M}_{fg} \\
C \mapsto F_C
\end{gather*}
is flat, it follows that every elliptic cohomology theory $E_C$ coming from the theorem above could also have been constructed using Corollary~\ref{cor:Landweber}.  The novelty in Theorem~\ref{thm:GHM} is: 
\begin{enumerate}
\item the functor $\mc{O}^{top}$ lands in the point-set category of spectra, rather than in the homotopy category of spectra,
\item the spectra $E_C$ are $E_\infty$, not just homotopy ring spectra, and 
\item the functor $\mc{O}^{top}$ can be evaluated on non-affine \'etale maps of stacks
$$ \mc{X} \rightarrow \br{\mc{M}}_{ell}. $$
\end{enumerate}
\end{remark}

Elaborating on point (3) above, the ``homotopy sheaf''\index{homotopy sheaf} property of $\mc{O}^{top}$ implies that for any \'etale cover 
$$ \mc{U} = \{ U_i \} \rightarrow \mc{X} $$
the map
$$ \mc{X} \rightarrow  \mr{Tot} \left( \prod_{i_0} \mc{O}^{top}(U_{i_0}) \Rightarrow \prod_{i_0, i_1} \mc{O}^{top}(U_{i_0} \times_{\mc{X}} U_{i_1}) \Rrightarrow \cdots \right)$$ 
is a equivalence.  The Bousfield-Kan spectral sequence of the totalization takes the form
\begin{equation}\label{eq:BKSS}
 E_1^{s,t} =  \prod_{i_0, \ldots, i_s} \pi_t \mc{O}^{top}(U_{i_0} \times_{\mc{X}} \cdots \times_{\mc{X}} U_{i_s}) \Rightarrow \pi_{t-s}\mc{O}^{top}(\mc{X}).
\end{equation}
Because $\br{\mc{M}}_{ell}$ is a separated Deligne-Mumford stack, there exists a cover of $\mc{X}$ by affines, and all of their iterated pullbacks are also affine.  Since every elliptic curve is locally trivializable over its base, we can refine any such cover to be a cover which classifies trivializable elliptic curves. In this context we find (using (\ref{eq:TC})) that the $E_1$-term above can be identified with the \v Cech complex 
$$ E_1^{s,2k} = \check{C}^s_{\mc{U}}(\mc{X}, \omega^{\otimes k}) $$
and we obtain the \emph{descent spectral sequence}\index{descent spectral sequence}
\begin{equation}\label{eq:dss}
E_2^{s,2k} = H^s(\mc{X}, \omega^{\otimes k}) \Rightarrow \pi_{2k-s} \mc{O}^{top}(\mc{X}).
\end{equation}

\subsection*{Non-connective topological modular forms}

Motivated by (\ref{eq:MF}) and (\ref{eq:dss}), we make the following definition.   

\begin{definition}
The spectrum of \emph{(non-connective) topological modular forms}\index{topological modular forms} \index{$\Tmf$} is defined to be the spectrum of global sections
$$ \Tmf := \mc{O}^{op}(\br{\mc{M}}_{ell}). $$
\end{definition}

To get a feel for $\Tmf$, we investigate the descent spectral sequence for $\Tmf[1/6]$.  

\begin{proposition}\label{prop:piTmf}
We have\footnote{Here, we use the notation $\frac{\ZZ[1/6][c_4,c_6]}{(c_4^\infty, c_6^\infty)}\{\theta\}$ to mean that $\theta$ is highest degree non-zero class in this divisible pattern.}
$$
H^*((\br{\mc{M}}_{ell})_{\ZZ[1/6]}, \omega^{\otimes *}) = \ZZ[1/6][c_4, c_6]\oplus \frac{\ZZ[1/6][c_4,c_6]}{(c_4^\infty, c_6^\infty)}\{\theta\} 
$$
where
\begin{align*}
c_k & \in H^0((\br{\mc{M}}_{ell})_{\ZZ[1/6]}, \omega^{\otimes k}), \\
\theta & \in H^1((\br{\mc{M}}_{ell})_{\ZZ[1/6]}, \omega^{\otimes -10}).
\end{align*}
Thus there are no possible differentials in the descent spectral sequence, and we have
$$ \pi_*\Tmf[1/6] \cong \ZZ[1/6][c_4, c_6]\oplus \frac{\ZZ[1/6][c_4,c_6]}{(c_4^\infty, c_6^\infty)}\{\theta\} $$
with
\begin{align*}
\abs{c_k} & = 2k, \\
\abs{\theta} & = -21.
\end{align*}
\end{proposition}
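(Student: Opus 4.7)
The plan is to reduce the cohomology computation to one on a weighted projective stack, then show the descent spectral sequence (\ref{eq:dss}) collapses for formal degree reasons.

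Over $\ZZ[1/6]$ one can complete the square in $y$ and the cube in $x$ in a Weierstrass equation, reducing every elliptic curve to the normal form
$$ y^2 = x^3 - 27 c_4 x - 54 c_6, \qquad \Delta = (c_4^3 - c_6^2)/1728. $$
Allowing $\Delta = 0$ (so as to pick up the N\'eron $1$-gon at the cusp) gives an identification
$$ (\br{\mc{M}}_{ell})_{\ZZ[1/6]} \;\cong\; \bigl( \mathrm{spec}(\ZZ[1/6][c_4,c_6]) \setminus \{0\} \bigr) / \GG_m \;=\; \mc{P}(4,6)_{\ZZ[1/6]}, $$
the weighted projective stack with weights $(4,6)$; under this identification $\omega$ becomes the tautological weight-one line bundle $\mc{O}(1)$. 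Once the stack is in this form, the cohomology of $\omega^{\otimes *}$ is a standard weighted-$\PP^1$ calculation, which I would carry out via the two-affine \v{C}ech cover $\{c_4 \neq 0\}\cup\{c_6 \neq 0\}$ (equivalently, via the local cohomology sequence for the irrelevant ideal $(c_4,c_6) \subset \ZZ[1/6][c_4, c_6]$). One reads off
$$ H^0 = \ZZ[1/6][c_4, c_6], \qquad H^{\geq 2} = 0, $$
and Serre duality for $\mc{P}(4,6)$, whose dualizing sheaf is $\mc{O}(-4-6) = \omega^{\otimes -10}$, packages $H^1(\omega^{\otimes k})$ over all $k$ into $\frac{\ZZ[1/6][c_4,c_6]}{(c_4^\infty, c_6^\infty)}\{\theta\}$ with the top class $\theta$ sitting in weight $-10$.

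Feeding this into the descent spectral sequence (\ref{eq:dss}), no differentials can be nonzero. Only even values of $t = 2k$ are populated on $E_2$, so a differential $d_r$ can be nonzero only for odd $r$, hence $r \geq 3$; but the $E_2$-page is concentrated in the two rows $s = 0$ and $s = 1$, and any differential of length $\geq 2$ lands in row $s + r \geq 3$, where $E_2$ vanishes. The sequence therefore collapses at $E_2$. Moreover, the two surviving rows contribute to disjoint parities of $\pi_*$ (the $s=0$ classes land in even degrees and the $s=1$ classes in odd degrees), so there is no extension problem between them. Tallying bidegrees via $|c_k| = 2k$ and $|\theta| = 2(-10) - 1 = -21$ then gives the stated formula for $\pi_*\Tmf[1/6]$. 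The only non-formal input is the $H^1$ computation on $\mc{P}(4,6)$, and that is the main (though mild) obstacle; it is either a direct \v{C}ech computation or an application of Serre duality on a proper smooth $1$-dimensional Deligne-Mumford stack, both of which are standard once the identification $(\br{\mc{M}}_{ell})_{\ZZ[1/6]} \cong \mc{P}(4,6)$ is in hand.
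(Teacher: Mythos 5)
Your proposal is correct and follows essentially the same route as the paper: reduce to the Weierstrass normal form $y^2 = x^3 - 27c_4x - 54c_6$ over $\ZZ[1/6]$, present $(\br{\mc{M}}_{ell})_{\ZZ[1/6]}$ as a $\GG_m$-quotient computed by a two-chart \v{C}ech complex (the paper uses the cover $\{\Delta \neq 0\} \cup \{c_4 \neq 0\}$ rather than your $\{c_4 \neq 0\} \cup \{c_6 \neq 0\}$, but these cover the same open substack since $V(c_4,\Delta)$ and $V(c_4,c_6)$ have the same underlying closed set), and then note the descent spectral sequence collapses because $E_2$ is concentrated in $s \le 1$. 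Your explicit identification with the weighted projective stack $\mc{P}(4,6)$ and the appeal to Serre duality are just alternative packaging of the same calculation.
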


\begin{proof}
Every trivializable elliptic curve $C$ over a $\ZZ[1/6]$-algebra $R$ can be embedded in $\PP^2$, where it takes the Weierstrass form (see, for example, \cite[III.1]{SilvermanI})
\begin{equation}\label{eq:Weier}
 C_{c_4, c_6}: y^2 = x^3 -27c_4x-54c_6, \quad c_4, c_6 \in R
\end{equation}
where the \emph{discriminant}\index{discriminant}
$$ \Delta := \frac{c_4^3-c_6^2}{1728} $$
is invertible. The isomorphisms of elliptic curves of this form are all of the form
\begin{align*}
 f_\lambda: C_{c_4, c_6} & \rightarrow C_{c_4', c_6'} \\
 (x,y) & \mapsto (\lambda^2 x, \lambda^3 y)
\end{align*}
with 
\begin{equation}\label{eq:Gmaction}
c_k' = \lambda^k c_k.
\end{equation}
We deduce that
$$ (\mc{M}_{ell})_{\ZZ[1/6]} = \mr{spec}(\ZZ[1/6][c_4, c_6, \Delta^{-1}])\mmod \GG_m $$
where the $\GG_m$-action is given by (\ref{eq:Gmaction}).  The $\GG_m$-action encodes a grading on $\ZZ[1/6][c_4, c_6, \Delta^{-1}]$ where
$$ \deg c_k := k. $$
Using the coordinate $z = y/x$ at $\infty$ for the Weierstrass curve $C_{c_4, c_6}$ (the identity for the group structure), we compute
$$ f^*_{\lambda} dz = \lambda dz. $$
It follows that the cohomology of $\omega^{\otimes k}$ is the $k$th graded summand of the cohomology of the structure sheaf of $\mr{spec}(\ZZ[1/6][c_4, c_6, \Delta^{-1}])$:
\begin{align} 
H^s((\mc{M}_{ell})_{\ZZ[1/6]}, \omega^{\otimes k}) & \cong  
H^{s,k}(\mr{spec}(\ZZ[1/6][c_4, c_6, \Delta^{-1}])) \\
& = \begin{cases}
\ZZ[1/6][c_4, c_6, \Delta^{-1}]_k, & s = 0, \\
0, & s > 0.
\end{cases} 
\label{eq:HsMell}
\end{align}
We extend the above analysis to the compactification $\br{\mc{M}}_{ell}$ by allowing for nodal singularities.\footnote{A curve of the form $C_{c_4, c_6}$ can only have nodal or cuspidal singularities, and the nodal case is a N\'eron $1$-gon.}  A curve $C_{c_4, c_6}$ has a nodal singularity if and only if $\Delta = 0$ and $c_4$ is invertible.  We therefore compute
\begin{multline*}
H^s((\br{\mc{M}}_{ell})_{\ZZ[1/6]}, \omega^{\otimes k}) \\
\cong  
H^{s,k}(\mr{spec}(\ZZ[1/6][c_4, c_6, \Delta^{-1}]) \cup \mr{spec}(\ZZ[1/6][c_4^{\pm}, c_6]))
\end{multline*}
as the kernel and cokernel of the map
$$ 
\begin{array}{c}
\ZZ[1/6][c_4, c_6, \Delta^{-1}] \\
\oplus \\
\ZZ[1/6][c_4^{\pm}, c_6]
\end{array}
\rightarrow \ZZ[1/6][c_4^{\pm}, c_6, \Delta^{-1}].
$$
\end{proof}

The unlocalized cohomology
$$ H^s(\br{\mc{M}}_{ell}, \omega^{\otimes k}) $$
is non-trivial for arbitrarily large values of $s$, but for $s > 1$ consists entirely of $2$- and $3$-torsion, resulting in $2$- and $3$-torsion persisting to $\pi_*\Tmf$.  This will be discussed in more detail in Section~\ref{sec:pitmf}.  

\subsection*{Variants of Tmf}

We highlight two variants of the spectrum $\Tmf$: the \emph{connective} and the \emph{periodic} versions.  One feature of $\pi_*\Tmf[1/6]$ which is apparent in Proposition~\ref{prop:piTmf} is that
$$ \pi_k \Tmf[1/6] = 0,  \quad -20 \le k \le -1. $$
It turns out that this gap in homotopy groups occurs in the unlocalized $\Tmf$ spectrum (see Section~\ref{sec:pitmf}), and the negative homotopy groups of $\Tmf$ are related to the positive homotopy groups of $\Tmf$ by Anderson duality (at least with 2 inverted, see \cite{Stojanoska}).  

We therefore isolate the positive homotopy groups by defining the \emph{connective $\tmf$-spectrum} to be the connective cover
$$ \tmf := \tau_{\ge 0} \Tmf. $$\index{topological modular forms} \index{$\tmf$}

The modular form $\Delta \in MF_{12}$ is not a permanent cycle in the descent spectral sequence for unlocalized $\Tmf$, but $\Delta^{24}$ is.
It turns out that the map
$$ \pi_* \tmf \rightarrow \pi_*\tmf[\Delta^{-24}] $$
is injective.  Motivated by this, we define the \emph{periodic $\TMF$-spectrum} by
$$ \TMF := \tmf[\Delta^{-24}]. $$\index{topological modular forms}\index{$\TMF$}
This spectrum is $\abs{\Delta^{24}} = 576$-periodic.  We have
$$\TMF \simeq \Tmf[\Delta^{-24}] \simeq \mc{O}^{top}(\mc{M}_{ell}) $$
where the last equivalence comes from the fact that $\mc{M}_{ell}$ is the complement of the zero-locus of $\Delta$ in $\br{\mc{M}}_{ell}$.  

Another variant comes from the consideration of level structures.  Given a congruence subgroup $\Gamma \le SL_2(\ZZ)$, one can consider the modular forms of level $\Gamma$ to be those holomorphic functions on the upper half plane which satisfy (\ref{eq:MF}) for all $A \in \Gamma$, and which satisfy a holomorphicity condition at all of the cusps of the quotient
$$ \mc{M}_{ell}(\Gamma)(\CC) = \mc{H}\mmod \Gamma. $$

Integral versions of $\mc{M}_{ell}(\Gamma)$ can be defined by considering moduli spaces of elliptic curves with certain types of \emph{level structures}.\index{level structure}  The most common $\Gamma$ which are considered are:
\begin{align*}
\Gamma_0(N) & := \left\{ A \in SL_2(\ZZ) \: : \: A \equiv 
\begin{pmatrix} * & * \\ 0 & * \end{pmatrix} \pmod N \right\}, \\
\Gamma_1(N)& := \left\{ A \in SL_2(\ZZ) \: : \: A \equiv 
\begin{pmatrix} 1 & * \\ 0 & 1 \end{pmatrix} \pmod N \right\}, \\
\Gamma(N)& := \left\{ A \in SL_2(\ZZ) \: : \: A \equiv 
\begin{pmatrix} 1 & 0 \\ 0 & 1 \end{pmatrix} \pmod N \right\}.
\end{align*}
The corresponding moduli stacks $\mc{M}_{ell}(\Gamma_0(N))$ and $\mc{M}_{ell}(\Gamma_1(N))$ (respectively $\mc{M}_{ell}(\Gamma(N))$) are defined over $\ZZ[1/N]$ (respectively $\ZZ[1/N, \zeta_N]$), with $R$-points consisting of the groupoid of pairs
$$ (C, \eta) $$
where $C$ is an elliptic curve over $R$, and 
$$
\eta = 
\begin{cases}
\text{a cyclic subgroup of $C$ of order $N$}, & \Gamma = \Gamma_0(N), \\
\text{a point of $C$ of exact order $N$}, & \Gamma = \Gamma_1(N), \\
\text{an isomorphism $C[N] \cong \ZZ/N \times \ZZ/N$}, & \Gamma = \Gamma(N).
\end{cases}
$$

In each of these cases, forgetting the level structure results in an \'etale
map of stacks
\begin{equation}\label{eq:forget}
 \mc{M}_{ell}(\Gamma) \rightarrow \mc{M}_{ell}
 \end{equation}
and we define the associated (periodic) spectra of \emph{topological modular forms, with level structure}\index{topological modular forms with level structure} by
\begin{align*}
\TMF_0(N) & := \mc{O}^{top}(\mc{M}_{ell}(\Gamma_0(N))), \\
\TMF_1(N) & := \mc{O}^{top}(\mc{M}_{ell}(\Gamma_1(N))), \\
\TMF(N) & := \mc{O}^{top}(\mc{M}_{ell}(\Gamma(N))). \\
\end{align*}

Compactifications $\br{\mc{M}}_{ell}(\Gamma)$ of the moduli stacks $\mc{M}_{ell}(\Gamma)$ above were constructed by Deligne and Rapoport \cite{DeligneRapoport}.
The extensions of the maps (\ref{eq:forget}) to these compactifications
$$ 
 \br{\mc{M}}_{ell}(\Gamma) \rightarrow \br{\mc{M}}_{ell}
$$
are not \'etale, but they are log-\'etale.
Hill and Lawson have shown that the sheaf $\mc{O}^{top}$ extends to the log-\'etale site of $\br{\mc{M}}_{ell}$ \cite{HillLawson}, allowing us to define corresponding $\Tmf$-spectra by
\begin{align*}
\Tmf_0(N) & := \mc{O}^{top}(\br{\mc{M}}_{ell}(\Gamma_0(N))), \\
\Tmf_1(N) & := \mc{O}^{top}(\br{\mc{M}}_{ell}(\Gamma_1(N))), \\
\Tmf(N) & := \mc{O}^{top}(\br{\mc{M}}_{ell}(\Gamma(N))). \\
\end{align*}

\section{Homotopy groups of $\TMF$ at the primes $2$ and $3$}\label{sec:pitmf}

We now give an overview of the $2$- and $3$-primary homotopy groups of $\TMF$.  Detailed versions of these computations can be found in \cite{Konter}, \cite{Bauer}, and some very nice charts depicting the answers were created by Henriques \cite{Henriques}.  The basic idea is to invoke the descent spectral sequence.  The $E_2$-term is computed by imitating the argument of Proposition~\ref{prop:piTmf}.  The descent spectral sequence does not degenerate $2$ or $3$-locally, and differentials must be deduced using a variety of ad hoc methods similar to those used to compute differentials in the Adams-Novikov spectral sequence.

\subsection*{$3$-primary homotopy groups of $\TMF$}
Every elliptic curve over a $\ZZ_{(3)}$-algebra $R$ can (upon taking a faithfully flat extension of $R$) be put in the form \cite{Bauer}
$$ C'_{a_2, a_4}: y^2= 4x(x^2+a_2x+a_4), \quad a_i \in R $$
with
$$ \Delta = a_4^2(16a_2^2-64a_4) $$
invertible.
The isomorphisms of any such are of the form
\begin{equation}\label{eq:f_r}
\begin{split} 
f_{\lambda,r}: C'_{a_2, a_4} & \rightarrow C'_{a_2', a_4'} \\
 (x,y) & \mapsto (\lambda^2 (x - r), \lambda^3 y)
\end{split}
\end{equation}
with
$$ r^3 + a_2r^2 + a_4 r = 0 $$
and
\begin{equation}\label{eq:etaR3}
\begin{split}
a_2' & = \lambda^2(a_2 + 3r), \\
a_4' & = \lambda^4(a_4 + 2a_2 r + 3r^2).
\end{split}
\end{equation}
Following the template of the proof of Proposition~\ref{prop:piTmf}, we observe that for the coordinate $z = y/x$ at $\infty$, we have
\begin{equation}\label{eq:Df_r}
 f_{\lambda,r}^* dz = \lambda dz.
 \end{equation}
We may therefore use the $\lambda$ factor to compute the sections of $\omega^{\otimes *}$.

Specifically,
by setting $\lambda = 1$, we associate to this data a graded Hopf algebroid $(A',\Gamma')$ with
\begin{align*}
A' & := \ZZ_{(3)}[a_2, a_4, \Delta^{-1}], \quad \abs{a_i} = i, \\
\Gamma' & :=  A'[r]/(r^3 + a_2r^2 + a_4 r), \quad \abs{r} = 2
\end{align*}
with right unit given by (\ref{eq:etaR3}) (with $\lambda = 1$) and coproduct given by the composition of two isomorphisms of the form $f_{1,r}$.

Now consider the cover
$$ U = \mr{Proj}(A') \rightarrow (\mc{M}_{ell})_{\ZZ_{(3)}}. $$
We deduce from (\ref{eq:Df_r}) that 
$$ \omega^{\otimes k}(U) = A'_k $$
and more generally
$$ \omega^{\otimes k}(U^{\times_{\mc{M}_{ell}} (s+1)}) = \left( (\Gamma')^{\otimes_{A'} s} \right)_k. $$
It follows that the \v Cech complex for $\bigoplus_k \omega^{\otimes k}$ associated to the cover $U$
is the cobar complex for the graded Hopf algebroid $(A', \Gamma')$.
We deduce that the $E_2$-term of the descent spectral sequence is given by the cohomology of the Hopf algebroid $(A', \Gamma')$:
$$ H^s((\mc{M}_{ell})_{\ZZ_{(3)}}, \omega^{\otimes k}) = H^{s,k}(A', \Gamma'). $$
One computes (see \cite{Bauer}):

\begin{proposition}
$$ H^{s,k}(A', \Gamma') = \frac{\ZZ_{(3)}[c_4, c_6, \Delta^{\pm}][\beta] \otimes E[\alpha]}{3\alpha, 3\beta, \alpha c_4, \alpha c_6, \beta c_4, \beta c_6, c_6^2 = c_4^3-1728\Delta} $$
where $\beta$ is given by the Massey product
$$ \beta = \bra{\alpha, \alpha, \alpha} $$
and the generators are in bidegrees $(s,k)$: 
\begin{align*}
\abs{c_i} & =  (0,i), & \abs{\Delta} & = (0,12) \\
\abs{\alpha} & = (1,2), & \abs{\beta} & = (2,6).
\end{align*}
\end{proposition}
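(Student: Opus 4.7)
The plan is to analyze the cobar complex of $(A',\Gamma')$ directly, decomposing $H^{*,*}$ into a torsion-free summand of classical modular forms and a $3$-power torsion summand coming from the supersingular locus. The calculation splits into three stages.

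First, I would identify the polynomial generators. The Weierstrass invariants $c_4$, $c_6$, $\Delta$ for the curve $C'_{a_2, a_4}$ (with $\Delta = a_4^2(16 a_2^2 - 64 a_4)$ as already observed) are explicit polynomials in $a_2$ and $a_4$. A direct substitution into (\ref{eq:etaR3}) (with $\lambda = 1$) shows $\eta_R(c_k) = c_k$, so each is a cocycle in $C^0$, and the single polynomial relation $c_6^2 = c_4^3 - 1728 \Delta$ among them is classical. This produces the summand $\ZZ_{(3)}[c_4, c_6, \Delta^{\pm}]/(c_6^2 - c_4^3 + 1728\Delta) \subset H^{0,*}$; a Krull-dimension argument on $A'$ confirms these generate all of $H^0$.

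Second, to detect the higher classes I would reduce modulo $3$. The right unit simplifies to $\eta_R(a_2) = a_2$ and $\eta_R(a_4) = a_4 - a_2 r$, so $a_2$ is primitive, and the reduced cobar complex over $A'/3$ becomes tractable. A short computation produces a $1$-cocycle representing $\alpha \in H^{1,2}$ (built from $r$ together with the relation $r(r^2 + a_2 r + a_4) = 0$), and the triple Massey product $\beta = \langle \alpha, \alpha, \alpha \rangle$ is realized by an explicit cobar chain in bidegree $(2,6)$. The vanishing of $\alpha c_4$, $\alpha c_6$, $\beta c_4$, $\beta c_6$ is verified by exhibiting cobounding elements; this reflects the fact that $c_4$ and $c_6$ become nilpotent on the supersingular locus, where the torsion is supported.

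Third, I would pass from the mod $3$ cohomology back to the $3$-local answer via the Bockstein spectral sequence associated to $0 \to A' \xrightarrow{3} A' \to A'/3 \to 0$. The polynomial classes $c_4$, $c_6$, $\Delta$ are detected in $H^0$ integrally and lift with no trouble, while $\alpha$ and $\beta$ satisfy $3\alpha = 0 = 3\beta$, accounting for the displayed torsion relations. The main obstacle is ruling out hidden Bockstein differentials and additional torsion classes. The cleanest route is comparison with the Adams--Novikov $E_2$-term for the height-$\le 2$ formal groups that arise from elliptic curves at $p = 3$: after restriction to a formal neighborhood of the supersingular point, the $3$-local Hopf algebroid $(A', \Gamma')$ matches the Hopf algebroid governing the cohomology of the Morava stabilizer group $\mb{G}_2$ at $p = 3$, whose cohomology is well-known and matches the stated answer exactly.
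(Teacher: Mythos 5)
The paper does not actually prove this proposition; it states ``One computes (see Bauer)'' and defers entirely to that reference, so your outline has to stand on its own. Stages one and two, and the Bockstein setup opening stage three, are essentially the right plan and match what Bauer does: $c_4, c_6, \Delta$ satisfy $\eta_R(c_k) = c_k$ integrally (they are the Weierstrass covariants with $\lambda = 1$), $H^0$ is the $\Delta$-inverted ring of $3$-local modular forms, $\alpha = [r]$ is a primitive $1$-cocycle with $3\alpha = d(a_2)$, and the relations $\alpha c_4 = \alpha c_6 = 0$ do come from explicit coboundaries (mod $3$ one has $c_4 \equiv a_2^2$ and $d(a_4) \equiv -a_2 r$). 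One small caveat: a Krull-dimension count only shows the invariant subring has the right size; that $c_4, c_6, \Delta^{\pm 1}$ actually generate $H^0$ is Deligne's computation of the ring of integral modular forms and should be invoked as such.

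The genuine gap is in your final step, which is exactly where the content lies: ruling out additional torsion and hidden Bockstein differentials. You propose to do this by identifying $(A',\Gamma')$ near the supersingular locus with ``the Hopf algebroid governing the cohomology of the Morava stabilizer group $\GG_2$ at $p=3$.'' This fails on two counts. First, the correct local object at the supersingular point is the cohomology of the \emph{finite} group $\aut(C)\rtimes\mr{Gal}(\FF_9/\FF_3)$ (with $\abs{\aut(C)} = 12$), a finite subgroup of $\GG_2$ acting on Lubin--Tate theory --- not the continuous cohomology of the full profinite group $\GG_2$, which is a far larger object (Ravenel's computation, containing classes such as $\zeta$) and does not match the stated answer. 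Second, even the correct comparison map from $H^{*,*}(A',\Gamma')$ to that group cohomology is an isomorphism only after completion at $(3,c_4)$ --- this is precisely the content of Proposition~\ref{prop:K2TMF} --- so it detects only the $c_4$-power-torsion part; you would still owe a separate argument that everything in $H^{s}$ for $s\ge 1$ is $c_4$-power torsion and that the ordinary locus contributes nothing above $H^0$. The reference the paper cites avoids any such appeal: the completeness of the list of generators and relations is established directly by filtering the mod-$3$ cobar complex (a May-type spectral sequence collapsing onto $\FF_3[\beta]\otimes E[\alpha]$ together with the image of the modular forms), after which the integral Bockstein is immediate. Replacing your comparison step with that filtration argument (or with the corrected finite-group comparison plus the missing $c_4$-torsion claim) would close the gap.
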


\begin{figure}
\includegraphics[angle = 90, origin=c, height =.7\textheight]{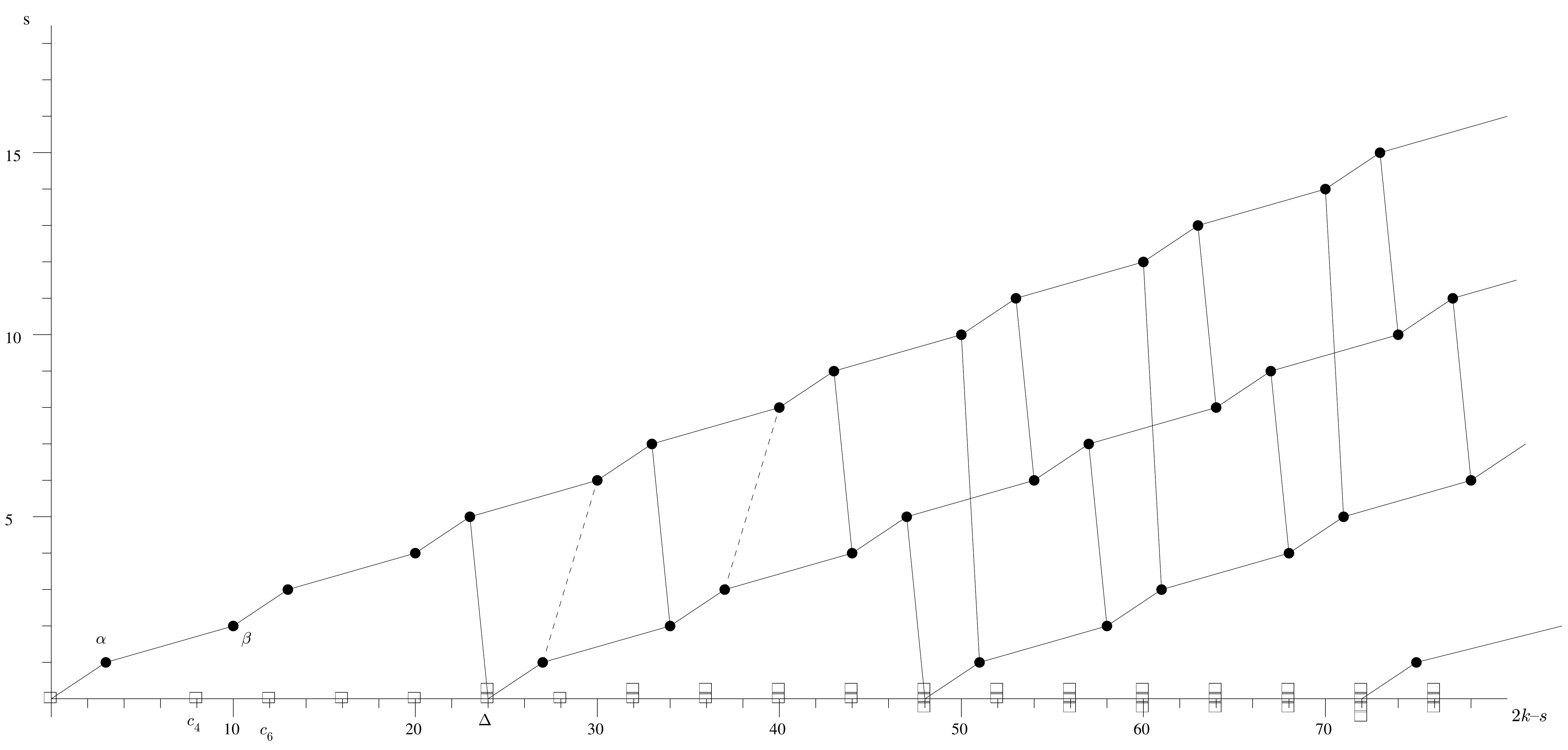}
\caption{The descent spectral sequence for $\tmf_{(3)}$ (the descent spectral sequence for $\TMF_{(3)}$ is obtained by inverting $\Delta$)}\label{fig:tmf3}
\end{figure}

Figure~\ref{fig:tmf3} displays the descent spectral sequence 
$$ H^{s,k}(A', \Gamma') \Rightarrow \pi_{2k-s}\TMF_{(3)}. $$
Here:
\begin{itemize}
\item Boxes correspond to $\ZZ_{(3)}$'s.
\item Dots correspond to $\ZZ/3$'s.
\item Lines of slope $1/3$ correspond to multiplication by $\alpha$.
\item Lines of slope $1/7$ correspond to the Massey product $\bra{-, \alpha, \alpha}$.  
\item Lines of slope $-r$ correspond to $d_r$-differentials.
\item Dashed lines correspond to hidden $\alpha$ extensions.
\end{itemize}
We omit the factors coming from negative powers of $\Delta$.  In other words, the descent spectral sequence for $\TMF$ is obtained from Figure~\ref{fig:tmf3} by inverting $\Delta$.  The differential on $\beta\Delta$ comes from the Toda differential in the Adams-Novikov spectral sequence for the sphere, and this implies all of the other differentials.  As the figure indicates, $\Delta^3$ is a permanent cycle, and so $\pi_*\TMF_{(3)}$ is $72$-periodic.

Under the Hurewicz homomorphism
$$ \pi_*S_{(3)} \rightarrow \pi_*{\TMF_{(3)}} $$
the elements $\alpha_1$ and $\beta_1$ map to $\alpha$ and $\beta$, respectively.

\subsection*{$2$-primary homotopy groups of $\TMF$}

The analysis of the $2$-primary descent spectral sequence proceeds in a similar fashion, except that the computations are significantly more involved.  We will content ourselves to summarize the set-up, and then state the resulting homotopy groups of $\TMF$, referring the reader to \cite{Bauer} for the details.

Every elliptic curve over a $\ZZ_{(2)}$-algebra $R$ can (upon taking an \'etale extension of $R$) be put in the form \cite{Bauer}
$$ C''_{a_1, a_3}: y^2 + a_1 xy + a_3 y = x^3, \quad a_i \in R $$
with
$$ \Delta =  a_3^3(a_1^3 - 27a_3) $$
invertible.

The isomorphisms of any such are of the form
\begin{equation}\label{eq:f_st}
\begin{split} 
f_{\lambda,s,t}: C''_{a_1, a_3} & \rightarrow C''_{a_1', a_3'} \\
 (x,y) & \mapsto (\lambda^2 (x - 1/3(s^2+a_1s)), \lambda^3 (y- sx + 1/3(s^3+a_1 s^2)-t))
\end{split}
\end{equation}
with\footnote{We warn the reader that there may be a typo in the analog of (\ref{eq:rel2}) which appears in \cite[Sec.~7]{Bauer}, as even using (\ref{eq:rel1}), relation (\ref{eq:rel2}) seems to be inconsistent with what appears there.}
\begin{gather}
s^4 - 6st +a_1s^3 - 3a_1t - 3a_3s = 0, \label{eq:rel1}\\
- 27t^2 + 18s^3t + 18a_1s^2t - 27a_3t - 2s^6 - 3a_1s^5 + 9a_3s^3 + a_1^3s^3 + 9a_1a_3s^2 = 0,
\label{eq:rel2}
\end{gather}
and
\begin{equation}\label{eq:etaR2}
\begin{split}
a_1' & := \lambda(a_1+2s), \\
a_3' & := \lambda^3(a_3+ 1/3(a_1 s^2+a_1 s) + 2t).
\end{split}
\end{equation}
Again,
setting $\lambda = 1$, we associate to this data a graded Hopf algebroid $(A'',\Gamma'')$ with
\begin{align*}
A'' & := \ZZ_{(2)}[a_1, a_3, \Delta^{-1}], \quad \abs{a_i} = i, \\
\Gamma'' & :=  A''[s,t]/\sim, \quad \abs{s} = 1, \: \abs{t} = 3
\end{align*}
(where $\sim$ consists of relations (\ref{eq:rel1}), (\ref{eq:rel2})) with right unit given by (\ref{eq:etaR3}) (with $\lambda = 1$) and coproduct given by the composition of two isomorphisms of the form $f_{1,s,t}$.  The $E_2$-term of the descent spectral sequence takes the form
$$ H^s((\mc{M}_{ell})_{(2)}, \omega^{\otimes k}) = H^{s,k}(A'', \Gamma''). $$

\begin{proposition}[\cite{Bauer}, \cite{Rezk}]
The cohomology of the Hopf algebroid $(A'',\Gamma'')$
is given by
$$ H^{*,*}(A'', \Gamma'') = \ZZ_{(2)}[c_4, c_6, \Delta^{\pm}, \eta, a_1^2 \eta, \nu, \epsilon, \kappa, \bar{\kappa}]/(\sim) $$
where $\sim$ consists of the relations
\begin{gather*}
2\eta, \: \eta\nu, \: 4\nu, \: 2\nu^2, \: \nu^3 = \eta\epsilon, \\
2\epsilon, \: \nu \epsilon, \: \epsilon^2, \: 2a_1^2\eta, \: \nu a_1^2\eta, \: \epsilon a_1^2\eta, \: (a_1^2 \eta)^2 = c_4 \eta^2 , \\
2\kappa, \: \eta^2\kappa, \: \nu^2\kappa = 4\bar{\kappa}, \: \epsilon\kappa, \: \kappa^2, \: \kappa a_1^2\eta, \\
\nu c_4, \: \nu c_6, \: \epsilon c_4, \: \epsilon c_6, \: a_1^2\eta c_4 = \eta c_6, \: a_1^2\eta c_6 = \eta c_4^2, \\
\kappa c_4, \: \kappa c_6, \: \bar{\kappa}c_4 = \eta^4\Delta, \: \bar{\kappa}c_6 = \eta^3(a_1^2\eta)\Delta, \:  c_6^2 = c_4^3 - 1728\Delta
\end{gather*}
and the generators are in bidegrees $(s,k)$: 
\begin{align*}
\abs{c_i} & =  (0,i), & \abs{\Delta} & = (0,12), & \abs{\eta} & = (1,1), \\
\abs{a_1^2\eta} & = (1,3), & \abs{\nu} & = (1,2), & \abs{\epsilon} & = (2,5), \\
\abs{\kappa} & = (2,8), & \abs{\bar{\kappa}} & = (4,12).
\end{align*}
\end{proposition}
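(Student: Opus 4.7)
The plan is to compute $H^{*,*}(A'',\Gamma'')$ using the cobar complex of the Hopf algebroid, broken down via an algebraic filtration spectral sequence, and then to pin down the multiplicative structure by comparison with the Adams-Novikov $E_2$-term for the sphere. First I would identify the invariants $H^{0,*}=(A'')^{\Gamma''}$. Using the right-unit formulas (\ref{eq:etaR2}) together with the defining relations (\ref{eq:rel1}), (\ref{eq:rel2}), one checks directly that the invariant subring is generated by the classical Weierstrass invariants, yielding $\ZZ_{(2)}[c_4,c_6,\Delta^{\pm}]/(c_6^2 - c_4^3 + 1728\Delta)$. Modulo $2$, $c_4 \equiv a_1^4$ and $c_6\equiv a_1^6$, which will matter for later extensions.

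To access $H^s$ for $s>0$, I would put a decreasing filtration on the cobar complex by powers of the augmentation ideal $I=(a_1,a_3)\subseteq A''$ (compatibly with $s,t\in\Gamma''$). The associated graded Hopf algebroid has primitively generated coproduct and an easily computable Koszul resolution, giving an algebraic May-type spectral sequence
\begin{equation*}
E_1^{*,*} = H^{*,*}(\mathrm{gr}\, A'', \mathrm{gr}\, \Gamma'') \Longrightarrow H^{*,*}(A'',\Gamma'').
\end{equation*}
The $E_1$-page should be a bigraded commutative algebra on polynomial generators in the bidegrees claimed for $\eta,\nu,a_1^2\eta,\epsilon,\kappa,\bar\kappa$ over the ring of invariants. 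The classes $\eta,\nu,\epsilon,\kappa,\bar\kappa$ are then identified as the images of the corresponding Adams-Novikov classes for the sphere under $\mathrm{Ext}^{*,*}_{BP_*BP}(BP_*,BP_*)\to H^{*,*}(A'',\Gamma'')$, while the exotic class $a_1^2\eta$ has no sphere analog: it arises because $a_1^2$ becomes a primitive in $A''/2$, producing a new $H^1$-generator sensitive to the mod-$2$ structure of the elliptic curve.

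Most of the additive relations (e.g.\ $2\eta$, $4\nu$, $2\nu^2$, $\nu^3=\eta\epsilon$, $\epsilon^2=0$) and the Massey product description $\beta=\langle\alpha,\alpha,\alpha\rangle$-style presentation of $\bar\kappa$ can be imported from the Adams-Novikov $E_2$-term for the sphere together with the algebraic $d_1$-differentials detecting $2$-torsion in the filtration SS. The main obstacle is verifying the hidden multiplicative extensions between the invariant-ring generators $c_4,c_6,\Delta$ and the higher-filtration classes: the relations $\nu c_4=\nu c_6=\epsilon c_4=\epsilon c_6=\kappa c_4=\kappa c_6=0$, the mixing relations $a_1^2\eta\cdot c_4=\eta c_6$ and $a_1^2\eta\cdot c_6=\eta c_4^2$ and $(a_1^2\eta)^2=c_4\eta^2$, and the crucial extensions $\bar\kappa c_4=\eta^4\Delta$ and $\bar\kappa c_6=\eta^3(a_1^2\eta)\Delta$. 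I would attack these by two complementary techniques: (a) reducing modulo $2$ and $\Delta$ respectively to localize to the supersingular locus, where the formal group is of height $2$ and the computation reduces to a known calculation in the Morava stabilizer algebra cohomology; and (b) matching Massey product relations in the cobar complex (e.g.\ expressing $\bar\kappa c_4$ via $\langle \kappa, c_4, \eta\rangle$-type brackets) to identify hidden extensions. This bookkeeping of $2$-primary torsion and multiplicative extensions is where the bulk of the Bauer/Rezk argument resides and is the step I expect to absorb most of the effort.
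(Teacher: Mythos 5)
The paper does not actually prove this proposition; it is quoted from Bauer and Rezk, and the survey explicitly defers all details to those references. Your outline is essentially the strategy of the cited proof: reduce to the restricted Weierstrass Hopf algebroid $(A'',\Gamma'')$, filter the cobar complex so that the associated graded is primitively generated and Koszul, run the resulting algebraic (May-type) spectral sequence, and then settle the multiplicative structure by Massey products and comparison with the Adams--Novikov $E_2$-term and with the height-$2$ stabilizer-group calculation at the supersingular point. So the route is the right one. One caveat: your description of the $E_1$-page is too optimistic --- the associated graded of the $(a_1,a_3)$-adic filtration has cohomology polynomial on infinitely many Koszul generators (with $c_4$, $c_6$, $\Delta$ themselves sitting in positive filtration, not as a base ring of invariants), and a nontrivial cascade of algebraic differentials is needed before the six listed generators emerge; likewise, relations such as $\nu^3=\eta\epsilon$ cannot simply be ``imported'' from the sphere without first verifying they hold in $\mathrm{Ext}_{BP_*BP}$ rather than only in $\pi_*S$ --- Bauer derives them internally by Massey-product juggling. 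These are exactly the points where, as you anticipate, the bulk of the work lies.
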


\begin{figure}
\includegraphics[angle = 90, origin=c, height =.7\textheight]{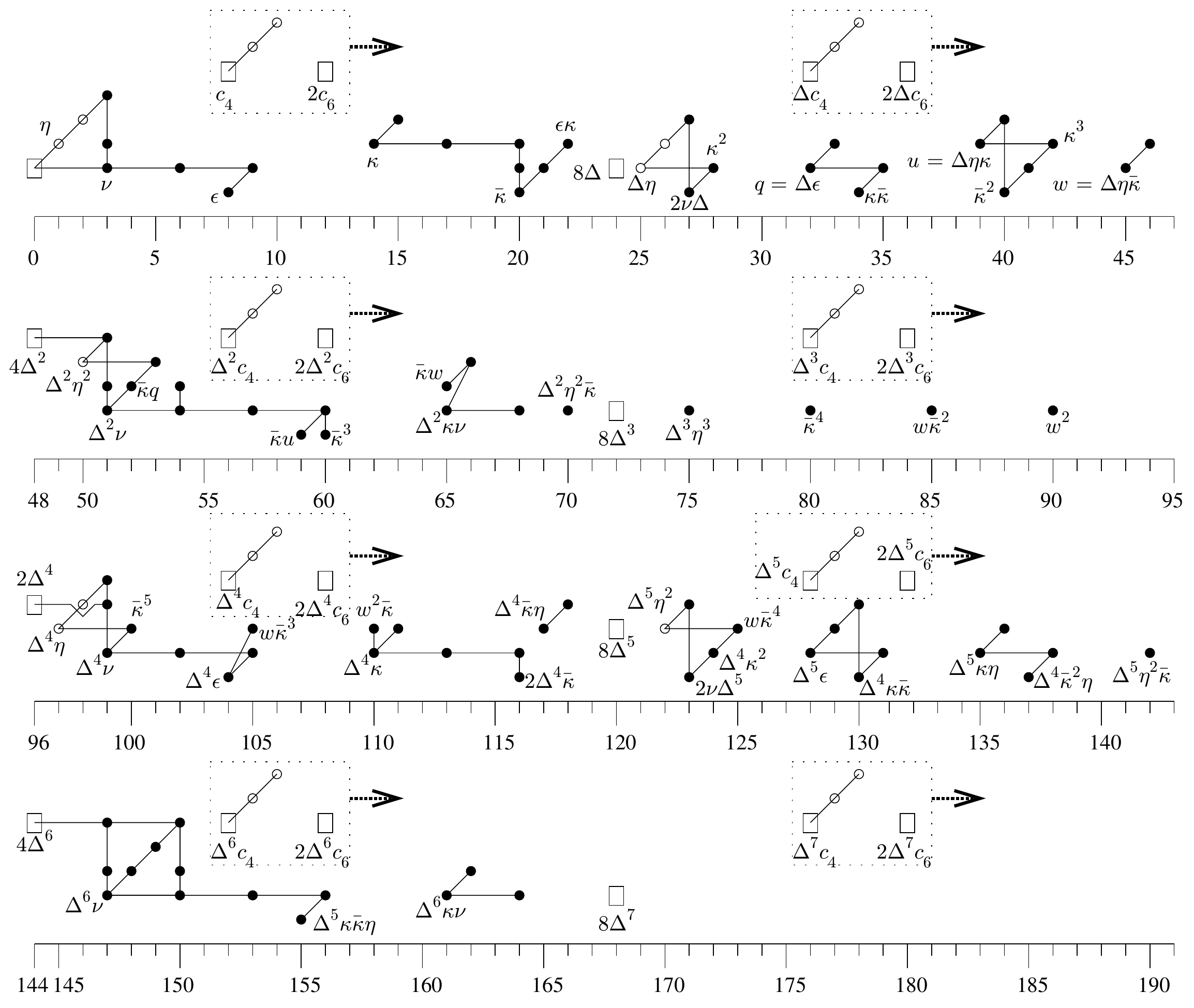}
\caption{The homotopy groups of $\tmf_{(2)}$ ($\pi_*\TMF_{(2)}$ is obtained by inverting $\Delta$)}\label{fig:tmf2}
\end{figure}

There are many differentials in the descent spectral sequence
$$ H^{s,k}(A'', \Gamma'') \Rightarrow \pi_{2k-s}\TMF_{(2)}. $$
These were first determined by Hopkins, and first appeared in the preprint ``From elliptic curves to homotopy theory'' by Hopkins and Mahowald \cite{HopkinsMahowald}, and we refer the reader to that paper or  \cite{Bauer} for the details.

We content ourselves with simply stating the resulting homotopy groups of $\TMF_{(2)}$.  These are displayed in Figure~\ref{fig:tmf2}.  Our choice of names for elements in the descent spectral sequence (and our abusive practice of giving the elements of $\pi_*\TMF$ they detect the same names) is motivated by the fact that the elements
$$ \eta, \nu, \epsilon, \kappa, \bar{\kappa}, q, u, w $$
in the 2-primary stable homotopy groups of spheres map to the corresponding elements in $\pi_*\TMF_{(2)}$.  We warn the reader that there are many hidden extensions in the descent spectral sequence, so that often the names of elements in Figure~\ref{fig:tmf2} do not reflect the element which detects them in the descent spectral sequence because in the descent spectral sequence the product would be zero.  For example, $\kappa^2$ is zero in $H^{*,*}(A'', \Gamma'')$, but nonzero in $\pi_*\TMF$.  More complete multiplicative information can be found in \cite{Henriques}.

In Figure~\ref{fig:tmf2}:
\begin{itemize}
\item A series of $i$ black dots joined by vertical lines corresponds to a factor of $\ZZ/2^i$ which is annihilated by some power of $c_4$.

\item An open circle corresponds to a factor of $\ZZ/2$ which is not annihilated by a power of $c_4$.

\item A box indicates a factor of $\ZZ_{(2)}$ which is not annihilated by a power of $c_4$.

\item The non-vertical lines indicate multiplication by $\eta$ and $\nu$.

\item A pattern with a dotted box around it and an arrow emanating from the right face indicates this pattern continues indefinitely to the right by $c_4$-multiplication (i.e. tensor the pattern with $\ZZ_{(2)}[c_4]$).
\end{itemize}
The element $\Delta^{8}$ is a permanent cycle, and $\pi_*\TMF_{(2)}$ is $192$-periodic on the pattern depicted in Figure~\ref{fig:tmf2}.  The figure does not depict powers of $c_4$ supported by negative powers of $\Delta$.  

\section{The homotopy groups of $\Tmf$ and $\tmf$}\label{sec:piTmf}

We give a brief discussion of how the analysis in Section~\ref{sec:pitmf}  can be augmented to determine $\pi_*\Tmf$, and thus $\pi_*\tmf$.  We refer the reader to \cite{Konter} for more details.  We have already described $\pi_*\Tmf[1/6]$ in Section~\ref{sec:tmf}, so we focus on $\pi_*\Tmf_{(p)}$ for $p = 2,3$.

\subsection*{The ordinary locus}

We first must describe a cover of $\br{\mc{M}}_{ell}$.  We recall that for a formal group $F$, the \emph{$p$-series}\index{p-series} is the formal power series
$$ [p]_F(x) = \underbrace{x+_F \cdots +_F x}_{p} $$
where $x+_Fy := F(x,y)$.  If $F$ is defined over a ring of characteristic $p$, we say it has \emph{height $n$}\index{height, of formal group} if its $p$-series takes the form
$$ [p]_F(x) = v_n^F x^{p^n} + \cdots $$
with $v^F_n$ a unit. 

Elliptic curves (over fields of characteristic $p$) have formal groups of height $1$ or $2$. 
We shall call a trivializable elliptic curve over a $\ZZ_{(p)}$-algebra $R$ \emph{ordinary}\index{ordinary elliptic curve} if the formal group $F_{\bar{C}}$ has height $1$ (where $\bar{C}$ the base change of the curve to $R/p$).  Let $(\mc{M}^{ord}_{ell})_{\ZZ_{(p)}}$ denote the moduli stack of ordinary elliptic curves, and define $(\br{\mc{M}}^{ord}_{ell})_{\ZZ_{(p)}}$ to be the closure of 
$(\mc{M}^{ord}_{ell})_{\ZZ_{(p)}}$ in $(\br{\mc{M}}_{ell})_{\ZZ_{(p)}}$.

We have the following lemma (\cite[Sec.~21]{Rezk}).

\begin{lemma}\label{lem:v1}
Let 
\begin{align*}
F' & = F_{\br{C}'_{a_2,a_4}}, \\
F'' & = F_{\br{C}''_{a_1,a_3}},
\end{align*}
denote the formal group laws of the reductions of the elliptic curves $C'_{a_2, a_4}$ and $C''_{a_1,a_3}$ modulo $3$ and $2$, respectively.  Then we have
\begin{align*}
v_1^{F'} & = -a_2, \\
v_1^{F''} & = a_1.
\end{align*}
\end{lemma}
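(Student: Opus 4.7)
The plan is to compute the $p$-series $[p]_F(z)$ modulo $p$ directly from the Weierstrass equation and extract $v_1$ as the coefficient of $z^p$. I would use the standard construction of the formal group law of a Weierstrass cubic (see \cite[Ch.~IV]{SilvermanI}): at the identity $[0{:}1{:}0]$, take the local parameter $z = -x/y$ and $w = -1/y$, so that a general Weierstrass equation $y^2 + a_1 xy + a_3 y = x^3 + a_2 x^2 + a_4 x + a_6$ becomes the fixed-point equation
$$ w = z^3 + a_1 z w + a_2 z^2 w + a_3 w^2 + a_4 z w^2 + a_6 w^3, $$
which is solved recursively for $w(z) \in R[[z]]$. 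The addition law on the curve then determines $F(z_1,z_2) \in R[[z_1,z_2]]$, whose leading terms begin $F(z_1,z_2) = z_1 + z_2 - a_1 z_1 z_2 - a_2 (z_1^2 z_2 + z_1 z_2^2) + \cdots$.

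For $p = 2$, applied to $C''_{a_1,a_3}$ with $a_2 = a_4 = a_6 = 0$, the expansion above immediately gives $F(z,z) = 2z - a_1 z^2 + O(z^3)$, which reduces modulo $2$ to $a_1 z^2 + O(z^3)$ and yields $v_1^{F''} = a_1$. For $p = 3$, applied to $C'_{a_2,a_4}: y^2 = 4x^3 + 4 a_2 x^2 + 4 a_4 x$, I would first rescale $y \mapsto 2y$ to reduce to the standard form $y^2 = x^3 + a_2 x^2 + a_4 x$ (valid over $\ZZ_{(3)}$). Then the invariant-differential shortcut is available: $\omega = (1 + a_2 z^2 + O(z^4))\, dz$ gives $\log_F(z) = z + (a_2/3) z^3 + O(z^5)$ and $\exp_F(z) = z - (a_2/3) z^3 + O(z^5)$, hence
$$ [3]_{F'}(z) = \exp_F(3 \log_F(z)) = 3z - 8 a_2 z^3 + O(z^5), $$
which reduces modulo $3$ to a nonzero multiple of $a_2 z^3$, giving the stated value of $v_1^{F'}$ up to the sign convention for $v_1$.

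The main obstacle is that the logarithm shortcut is not available directly in characteristic $2$, since the coefficient $a_1/2$ of $z^2$ in $\log_F$ is not $2$-integral; this forces one to work at the level of the formal group law itself in the $p=2$ case. More conceptually, the content of the lemma is the well-known fact that $v_1^F$ equals the Hasse invariant of the reduction $\br C$ (up to a unit): for an odd-characteristic Weierstrass curve $y^2 = f(x)$ the Hasse invariant is the coefficient of $x^{p-1}$ in $f(x)^{(p-1)/2}$, which for $C'$ in characteristic $3$ is $a_2$, and the characteristic-$2$ analog of this statement gives $a_1$ for $C''$. This identification both recovers the lemma and provides an independent cross-check for the direct power-series computation.
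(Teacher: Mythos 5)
The paper does not actually prove this lemma --- it is quoted directly from \cite[Sec.~21]{Rezk} --- so your self-contained computation is a reasonable substitute, and your method (read off $v_1$ as the coefficient of $z^p$ in $[p]_F(z) \bmod p$, computed from the Weierstrass formal group law, with the Hasse invariant as a cross-check) is the standard route. The $p=2$ half is complete and correct: from $F(z_1,z_2)=z_1+z_2-a_1z_1z_2+\cdots$ with $a_2=a_4=a_6=0$ one gets $[2](z)\equiv a_1z^2+O(z^3) \pmod 2$, hence $v_1^{F''}=a_1$.

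The problem is the sign at $p=3$. Your computation gives $[3](z)=3z-8a_2z^3+O(z^5)$, hence $v_1^{F'}\equiv-8a_2\equiv +a_2\pmod 3$, and your own cross-check agrees: the coefficient of $x^2$ in $4x^3+4a_2x^2+4a_4x$ is $4a_2\equiv a_2$. The lemma asserts $-a_2$, and in characteristic $3$ one has $a_2\neq -a_2$, so ``up to the sign convention for $v_1$'' does not close the gap: every standard normalization (the coefficient of $z^p$ in the $p$-series, which is the paper's stated definition of $v_1$, as well as the Hazewinkel and Araki generators) yields $+a_2$ here, and a rescaling of the coordinate by a unit $\lambda$ only changes $v_1$ by $\lambda^{1-p}=\lambda^{-2}$, which is $1$ mod $3$ --- in particular your substitution $y\mapsto 2y$ is harmless, but you should say so explicitly, since it does rescale the uniformizer $z=-x/y$ by $2$. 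So your argument as written proves $v_1^{F'}=a_2$, not the stated $v_1^{F'}=-a_2$; you must either exhibit the (nonstandard) convention that produces the minus sign or conclude that the sign in the lemma is a slip. For the paper's purposes the discrepancy is immaterial, since the lemma is only used through $c_4\equiv 16a_2^2\equiv(v_1^{F'})^2\pmod 3$ to identify the ordinary locus with the locus where $c_4$ is invertible, but a proof should state which of $\pm a_2$ it actually establishes rather than deferring to an unspecified convention.
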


Using the fact that \cite[Prop.18.7]{Rezk}
\begin{equation}\label{eq:c4}
c_4  \equiv 
\begin{cases}
16a_2^2 \pmod 3, & p = 3, \\
a_1^4 \pmod 2, & p = 2
\end{cases}
\end{equation}
we have (for $p = 2$ or $3$)
$$ \TMF^{ord}_{(p)} = \TMF_{(p)}[c_4^{-1}]. $$
We deduce from the computations of $\pi_*\TMF_{(p)}$:

\begin{proposition}
We have
$$
\pi_*\TMF^{ord}_{(p)} = \begin{cases}
\frac{\ZZ_{(3)}[c_4^{\pm}, c_6, \Delta^{\pm}]}{c_6^2 = c_4^3-1728\Delta}, & p = 3, \\
\frac{\ZZ_{(2)}[c_4^{\pm}, 2c_6, \Delta^{\pm}, \eta]}{2\eta, \eta^3,  \eta\cdot (2c_6), (2c_6)^2 = 4(c_4^3-1728\Delta)}, & p = 2.
\end{cases} 
$$
\end{proposition}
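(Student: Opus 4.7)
The plan is to identify $\TMF^{ord}_{(p)}$ with $\TMF_{(p)}[c_4^{-1}]$ (which the preceding text asserts without proof) and then to compute its homotopy groups by localizing the descent spectral sequence computations of Section~\ref{sec:pitmf} at $c_4$.

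First I would justify the identification $\TMF^{ord}_{(p)} = \TMF_{(p)}[c_4^{-1}]$. A trivializable elliptic curve $C$ over a $\ZZ_{(p)}$-algebra $R$ is ordinary iff $\bar{C} := C\otimes_R R/p$ has $v_1^{F_{\bar{C}}}$ a unit. By Lemma~\ref{lem:v1}, $v_1^{F'} = -a_2$ and $v_1^{F''} = a_1$. Comparing with (\ref{eq:c4}), we see $c_4 \equiv 16(v_1^{F'})^2 \pmod 3$ at $p=3$ and $c_4 \equiv (v_1^{F''})^4 \pmod 2$ at $p=2$. Thus $c_4$ is a unit mod $p$ exactly on the ordinary locus, so $(\br{\mc{M}}^{ord}_{ell})_{\ZZ_{(p)}}$ is the open substack of $(\br{\mc{M}}_{ell})_{\ZZ_{(p)}}$ obtained by inverting $c_4$. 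Because $c_4 \in \pi_8 \Tmf$ lifts the modular form $c_4$ under the descent spectral sequence edge map, the sheaf property of $\mc{O}^{top}$ gives $\TMF^{ord}_{(p)} \simeq \TMF_{(p)}[c_4^{-1}]$.

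For the case $p=3$, the proof is immediate from the description of $H^{*,*}(A',\Gamma')$ given in Section~\ref{sec:pitmf}: every torsion class in this $E_2$-term lies in the ideal generated by $\alpha$ and $\beta$, and the relations $\alpha c_4 = \beta c_4 = 0$ force the entire torsion ideal to die upon inverting $c_4$. What remains is the torsion-free part $\ZZ_{(3)}[c_4^{\pm},c_6,\Delta^{\pm}]/(c_6^2 = c_4^3 - 1728\Delta)$ concentrated in filtration zero. All differentials in the $\TMF_{(3)}$ descent spectral sequence originate from, or land in, torsion classes, so after inverting $c_4$ the spectral sequence collapses at $E_2$. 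No hidden extensions are possible for filtration reasons, yielding the stated answer.

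For $p=2$, the same strategy applies but with more careful bookkeeping. Inverting $c_4$ on the cohomology of $(A'',\Gamma'')$ immediately kills $\nu,\epsilon,\kappa$ by the relations $\nu c_4 = \epsilon c_4 = \kappa c_4 = 0$, while the relations $a_1^2\eta\cdot c_4 = \eta c_6$ and $\bar\kappa c_4 = \eta^4\Delta$ express $a_1^2\eta$ and $\bar\kappa$ as multiples of $\eta$ in the localized ring. The relation $(a_1^2\eta)^2 = c_4\eta^2$ then becomes $\eta^2 c_6^2 = c_4^3\eta^2$, which is automatic modulo the Weierstrass relation and $2\eta=0$. What remains on $E_2[c_4^{-1}]$ is the ring $\ZZ_{(2)}[c_4^{\pm},c_6,\Delta^{\pm},\eta]/(2\eta,\eta c_6,(2c_6)^2 = 4(c_4^3-1728\Delta))$ together with potential extra $\eta$-multiples. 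I would next run the Hopkins--Mahowald differentials with $c_4$ inverted: any differential supported on a torsion class surviving to the ordinary locus must hit another $c_4$-local torsion class, but the only candidates are $\eta$-multiples, and the known Toda-style differentials in the $\TMF_{(2)}$ spectral sequence (e.g.~$d_5(\Delta)$ and $d_7$ differentials) all either involve $\nu,\epsilon,\kappa$-families (and so die after inversion) or impose the relation $\eta^3=0$ and the hidden $2$-extension witnessing that only $2c_6$ (not $c_6$) is a permanent cycle. The main obstacle — the step I expect to require the most care — is precisely this last point: tracking how the differentials and hidden $2$-extensions from \cite{Bauer} specialize under $c_4$-inversion to produce the stated generators $\{c_4^{\pm}, 2c_6, \Delta^{\pm}, \eta\}$ with the relations $\eta^3 = 0$ and $(2c_6)^2 = 4(c_4^3-1728\Delta)$, as opposed to the naive $E_2$-level relations in which $c_6$ itself appears.
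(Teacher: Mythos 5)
Your proposal is correct and follows essentially the same route as the paper, which likewise obtains this proposition by using Lemma~\ref{lem:v1} and (\ref{eq:c4}) to identify $\TMF^{ord}_{(p)}$ with $\TMF_{(p)}[c_4^{-1}]$ and then reading off the $c_4$-localization of the already-computed descent spectral sequence for $\TMF_{(p)}$. One small correction for $p=2$: the passage from $c_6$ to $2c_6$ and the relation $\eta^3=0$ are both forced by the single differential $d_3(c_6) = c_4\eta^3$ (up to a unit), which survives $c_4$-inversion, rather than by a hidden $2$-extension; note also that at the $E_2$-level $\eta c_6 = c_4\cdot(a_1^2\eta)$ is nonzero in the localization, and $\eta\cdot(2c_6)=0$ only follows from $2\eta=0$ once the generator is taken to be $2c_6$.
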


Using the covers
\begin{align*}
\mr{Proj}(\ZZ_{(3)}[a_2^{\pm}, a_4]) \rightarrow (\br{\mc{M}}^{ord}_{ell})_{(3)}, \\
\mr{Proj}(\ZZ_{(2)}[a_1^{\pm}, a_3]) \rightarrow (\br{\mc{M}}^{ord}_{ell})_{(2)}, \\
\end{align*}
the Hopf algebroids $(A',\Gamma')$ and $(A'', \Gamma'')$ have variants where $a_2$ (respectively $a_1$) is inverted and $\Delta$ is not.  Using these, one computes the descent spectral sequence for $\Tmf^{ord}_{(p)}$ at $p = 2, 3$ and finds:

\begin{proposition}
We have:
$$
\pi_*\Tmf^{ord}_{(p)} = \begin{cases}
\frac{\ZZ_{(3)}[c_4^{\pm}, c_6, \Delta]}{c_6^2 = c_4^3-1728\Delta}, & p = 3, \\
\frac{\ZZ_{(2)}[c_4^{\pm}, 2c_6, \Delta, \eta]}{2\eta, \eta^3,  \eta\cdot (2c_6), (2c_6)^2 = 4(c_4^3-1728\Delta)}, & p = 2.
\end{cases}
$$ 
\end{proposition}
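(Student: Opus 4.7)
My plan is to apply the descent spectral sequence to the affine covers
\begin{align*}
U'_{3} &= \mr{Proj}(\ZZ_{(3)}[a_2^{\pm},a_4]) \to (\br{\mc{M}}^{ord}_{ell})_{(3)}, \\
U''_{2} &= \mr{Proj}(\ZZ_{(2)}[a_1^{\pm},a_3]) \to (\br{\mc{M}}^{ord}_{ell})_{(2)},
\end{align*}
whose associated graded Hopf algebroids are the variants of $(A',\Gamma')$ and $(A'',\Gamma'')$ in which $a_2$ (resp.\ $a_1$) is inverted and $\Delta$ is not. The outline follows that of Proposition \ref{prop:piTmf}: first identify the $E_2$-page, then dispose of differentials, and finally resolve hidden multiplicative extensions.

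For the $E_2$-page, Lemma \ref{lem:v1} together with the congruences (\ref{eq:c4}) tells me that inverting $a_2$ modulo $3$ (resp.\ $a_1$ modulo $2$) is the same as inverting $v_1$, and up to a unit in $\ZZ_{(p)}$ the same as inverting $c_4$. Thus the $E_2$-page is obtained from the already-computed $H^{*,*}(A',\Gamma')$ and $H^{*,*}(A'',\Gamma'')$ by inverting $c_4$ (while keeping $\Delta$ non-inverted). At $p=3$, the relations $\alpha c_4 = \beta c_4 = 0$ kill $\alpha$ and $\beta$, so the $E_2$-page collapses onto $s=0$ and reduces to $\ZZ_{(3)}[c_4^{\pm},c_6,\Delta]/(c_6^2 - c_4^3 + 1728\Delta)$. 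No nontrivial differentials or multiplicative extensions are possible, giving the stated formula for $p=3$.

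At $p=2$, the relations $\nu c_4 = \epsilon c_4 = \kappa c_4 = 0$ kill $\nu,\epsilon,\kappa$ after inversion, while $a_1^2\eta \cdot c_4 = \eta c_6$ and $\bar{\kappa}\cdot c_4 = \eta^4\Delta$ express $a_1^2\eta$ and $\bar{\kappa}$ in terms of the generators $c_4^{\pm},c_6,\Delta,\eta$. The remaining relations reduce to $2\eta=0$ and $c_6^2 = c_4^3 - 1728\Delta$ (the relation $(a_1^2\eta)^2 = c_4\eta^2$ becomes $1728\Delta\eta^2 = 0$, automatic from $2\eta=0$), so the $E_2$-page is $\ZZ_{(2)}[c_4^{\pm},c_6,\Delta,\eta]/(2\eta,\, c_6^2 - c_4^3 + 1728\Delta)$, concentrated in filtrations $0$ and $1$. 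To pass from $E_2$ to $\pi_*\Tmf^{ord}_{(2)}$ I import the differentials and hidden extensions from the full $\TMF_{(2)}$ descent spectral sequence of Section \ref{sec:pitmf}: a differential of the form $d_r(c_6) = (\text{unit})\cdot\eta^3 c_4$, present in the \cite{Bauer} computation, simultaneously forces $c_6$ not to be a permanent cycle (leaving $2c_6$, which is a cycle since $2\eta = 0$) and kills $\eta^3$, producing the relations $\eta^3 = 0$ and $(2c_6)^2 = 4(c_4^3 - 1728\Delta)$ (the latter obtained by multiplying $c_6^2 = c_4^3 - 1728\Delta$ by $4$); the relation $\eta \cdot 2c_6 = 0$ is automatic from $2\eta = 0$.

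The main obstacle is correctly importing the $p=2$ differential structure: the twin facts that $c_6$ survives only as $2c_6$ and that $\eta^3 = 0$ both descend from the same differential in the unlocalized $\TMF_{(2)}$ descent spectral sequence, whose existence requires the detailed Adams--Novikov-style analysis summarized in Section \ref{sec:pitmf}. Everything else in the ordinary setting is essentially formal once the $E_2$-page has been pinned down.
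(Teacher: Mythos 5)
Your proposal follows the paper's route exactly: the paper likewise passes to the variants of the Hopf algebroids $(A',\Gamma')$ and $(A'',\Gamma'')$ with $a_2$ (resp.\ $a_1$) inverted and $\Delta$ not inverted, and your identification of the resulting $E_2$-pages, the $d_3$-differential on $c_6$ hitting $\eta^3 c_4$ imported from \cite{Bauer}, and the ensuing relations $\eta^3 = 0$, $\eta\cdot(2c_6)=0$, $(2c_6)^2 = 4(c_4^3-1728\Delta)$ are all consistent with the stated answer. One small internal slip: the $p=2$ $E_2$-page is \emph{not} concentrated in filtrations $0$ and $1$ --- it carries an $\eta$-tower in every filtration (your own $d_3$ lands in filtration $3$, and $\eta^2\neq 0$ in the answer) --- but this remark plays no role in the rest of your argument, which is sound.
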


\subsection*{A homotopy pullback for $\Tmf_{(p)}$}

The spectrum $\Tmf$ can be accessed at the primes $2$ and $3$ in a manner analogous to the case of Proposition~\ref{prop:piTmf}: associated to the cover
$$ \left\{ (\br{\mc{M}}_{ell}^{ord})_{\ZZ_{(p)}}, (\mc{M}_{ell})_{\ZZ_{(p)}} \right\} \rightarrow (\br{\mc{M}}_{ell})_{\ZZ_{(p)}} $$
there is a homotopy pullback (coming from the sheaf condition of $\mc{O}^{top}$)
\begin{equation}\label{eq:Tmfpullback}
\xymatrix{
\Tmf_{(p)} \ar[r] \ar[d] & 
\mc{O}^{top}((\mc{M}_{ell})_{\ZZ_{(p)}}) \ar[d] \\
\mc{O}^{top}((\br{\mc{M}}_{ell}^{ord})_{\ZZ_{(p)}}) \ar[r] &
\mc{O}^{top}((\mc{M}^{ord}_{ell})_{\ZZ_{(p)}}).
}
\end{equation}
Since we have described the homotopy groups of the spectra
\begin{align*}
\TMF_{(p)} & := \mc{O}^{top}((\mc{M}_{ell})_{\ZZ_{(p)}}), \\
\TMF^{ord}_{(p)} & := \mc{O}^{top}(({\mc{M}}_{ell}^{ord})_{\ZZ_{(p)}}), \\
\Tmf^{ord}_{(p)} & := \mc{O}^{top}((\br{\mc{M}}^{ord}_{ell})_{\ZZ_{(p)}})
\end{align*}
at the primes $2$ and $3$, the homotopy groups of $\Tmf_{(p)}$ at these primes may be computed using the pullback square (\ref{eq:Tmfpullback}).  

\subsection*{The homotopy groups of $\tmf_{(p)}$}

Once one computes $\pi_*\Tmf_{(p)}$ it is a simple matter to read off the homotopy groups of the connective cover $\tmf_{(p)}$. We obtain:

\begin{theorem}
The homotopy groups of $\tmf_{(3)}$ are given by the $E_\infty$-page of the spectral sequence of Figure~\ref{fig:tmf3}, and the homotopy groups of $\tmf_{(2)}$ are depicted in Figure~\ref{fig:tmf2}.  These homotopy groups are $\Delta^3$ (respectively $\Delta^8$)-periodic.
\end{theorem}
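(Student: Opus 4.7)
The plan is to assemble the homotopy groups of $\tmf_{(p)}$ for $p=2,3$ from the homotopy pullback square \eqref{eq:Tmfpullback} together with the computations already in hand, and then truncate. Concretely, I would first run the long Mayer--Vietoris sequence associated to \eqref{eq:Tmfpullback},
\[
\cdots \to \pi_n \Tmf_{(p)} \to \pi_n \TMF_{(p)} \oplus \pi_n \Tmf^{ord}_{(p)} \to \pi_n \TMF^{ord}_{(p)} \to \pi_{n-1}\Tmf_{(p)} \to \cdots,
\]
where the right-hand map is the difference of the restriction maps. Each of the three input spectra has its homotopy already described: $\pi_*\TMF_{(p)}$ from the descent spectral sequence worked out in Section~\ref{sec:pitmf} (Figures~\ref{fig:tmf3} and \ref{fig:tmf2}), and $\pi_*\TMF^{ord}_{(p)}$ and $\pi_*\Tmf^{ord}_{(p)}$ from the two propositions of the ordinary locus subsection. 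Since the restriction from the compactified ordinary locus to the ordinary locus is just the localization $\Delta \mapsto \Delta^{\pm 1}$ at the level of graded rings, and the restriction $\TMF_{(p)} \to \TMF^{ord}_{(p)}$ is the $c_4$-localization, the map appearing in Mayer--Vietoris is transparent at the level of named generators.

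Next I would identify $\pi_*\Tmf_{(p)}$ inside $\pi_*\TMF_{(p)} \oplus \pi_*\Tmf^{ord}_{(p)}$ as the equalizer of the two restrictions to $\pi_*\TMF^{ord}_{(p)}$, together with the connecting contributions coming from classes in $\pi_*\TMF^{ord}_{(p)}$ not hit by this difference. Concretely, the classes in $\TMF_{(p)}$ that are $c_4$-torsion survive to $\Tmf_{(p)}$ only when they admit extensions into positive $\Delta$-powers, while classes involving negative $\Delta$-powers are detected in $\TMF^{ord}_{(p)}$ but not lifted from $\Tmf^{ord}_{(p)}$, and hence contribute to lower degrees via the connecting homomorphism. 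At $p=3$ this gives exactly the pattern of Figure~\ref{fig:tmf3} restricted to non-negative powers of $\Delta$ (together with its Anderson-dual shadow in negative degrees, which is outside our range once we truncate), and at $p=2$ the analogous identification against Figure~\ref{fig:tmf2}.

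Having pinned down $\pi_*\Tmf_{(p)}$, I would then take the connective cover $\tmf_{(p)} = \tau_{\ge 0}\Tmf_{(p)}$; by construction $\pi_n \tmf_{(p)} = \pi_n \Tmf_{(p)}$ for $n \ge 0$ and vanishes for $n<0$, which is precisely the non-negative portion displayed in the two figures. Periodicity is then a separate check: $\Delta^3$ at $p=3$ and $\Delta^8$ at $p=2$ are permanent cycles in the descent spectral sequence (established in Section~\ref{sec:pitmf} via the Toda differential analysis at $p=3$ and the Hopkins--Mahowald differentials at $p=2$), and they multiply nontrivially, yielding the claimed $72$- and $192$-fold periodicity of the pattern.

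The main obstacle will be the last two steps of the second paragraph: matching the abstract Mayer--Vietoris output against the explicit pictures requires keeping track of the hidden extensions listed in the figure legends (in particular $\kappa^2$ at $p=2$, and the dashed $\alpha$-extensions at $p=3$). These are not visible in the $E_\infty$-pages of the individual ordinary and periodic descent spectral sequences but become forced in the assembled $\Tmf_{(p)}$ by the exactness of the Mayer--Vietoris sequence and by multiplicativity with permanent cycles such as $\Delta^3$ and $\bar\kappa$. The bookkeeping is routine once one commits to a basis, so I would defer the explicit verification to \cite{Konter}, \cite{Bauer}, \cite{Henriques} and simply note that the resulting pattern is what appears in Figures~\ref{fig:tmf3} and \ref{fig:tmf2}.
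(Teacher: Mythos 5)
Your proposal is correct and follows essentially the same route as the paper: the paper likewise computes $\pi_*\Tmf_{(p)}$ from the homotopy pullback (\ref{eq:Tmfpullback}) using the already-established homotopy of $\TMF_{(p)}$, $\TMF^{ord}_{(p)}$, and $\Tmf^{ord}_{(p)}$, then passes to the connective cover and obtains periodicity from the permanence of $\Delta^3$ (resp.\ $\Delta^8$), deferring the detailed bookkeeping to \cite{Konter} and \cite{Bauer} just as you do.
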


We end this section by stating a very useful folklore theorem which was proven rigorously in \cite{Mathew}.

\begin{theorem}[Mathew]
The mod $2$ cohomology of $\tmf$ is given (as a module over the Steenrod algebra) by
$$ H^*(\tmf; \FF_2) = A\mmod A(2) $$
where $A$ is the mod $2$ Steenrod algebra, and $A(2)$ is the subalgebra generated by $\Sq^1$, $\Sq^2$, and $\Sq^4$. 
\end{theorem}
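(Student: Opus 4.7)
The plan is to produce an $A$-module map $\phi\colon A \mmod A(2) \to H^*(\tmf;\FF_2)$ sending the class of $1$ to a generator $u$ of $H^0(\tmf;\FF_2) \cong \FF_2$, and then to show $\phi$ is an isomorphism. Since $A(2)$ is generated as an algebra by $\Sq^1, \Sq^2, \Sq^4$, the left ideal $A\cdot A(2)^+$ is equal to $A\cdot \Sq^1 + A\cdot \Sq^2 + A\cdot \Sq^4$, so constructing $\phi$ reduces to verifying $\Sq^i u = 0$ for $i = 1, 2, 4$.

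The vanishing $\Sq^1 u = 0$ is automatic: since $\pi_0 \tmf = \ZZ$, there is a factorization $\tmf \to H\ZZ \to H\FF_2$, so $u$ is the mod $2$ reduction of an integral class and its Bockstein vanishes. For $\Sq^2 u = 0$ and $\Sq^4 u = 0$, I would pull back along the Ando-Hopkins-Rezk String orientation $\sigma\colon \mathrm{MString} \to \tmf$: classical results of Giambalvo identify the Thom class of $\mathrm{MString}$ as annihilated by $A(2)$, so $\sigma^*(\Sq^i u) = 0$, and an injectivity argument for $\sigma^*$ in these low degrees, using the low-dimensional computation of $\pi_*\tmf_{(2)}$ from Section~\ref{sec:pitmf}, would transfer the relations back to $\tmf$.

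To show $\phi$ is an isomorphism I would follow Mathew and use Galois descent. At the prime $2$, the map $\tmf^\wedge_2 \to (\tmf_1(3))^\wedge_2$ is a faithful Galois extension for a finite group $G$ related to $GL_2(\FF_3)$, and $(\tmf_1(3))^\wedge_2$ is a form of $BP\langle 2 \rangle$, whose mod $2$ cohomology is the classical $A$-module $A \mmod E[Q_0, Q_1, Q_2]$. The resulting descent spectral sequence
\[ H^s(G; H^t((\tmf_1(3))^\wedge_2;\FF_2)) \Rightarrow H^{s+t}(\tmf;\FF_2) \]
then computes $H^*(\tmf;\FF_2)$, and an explicit calculation of the $G$-action on the $E_2$-page identifies this with $A\mmod A(2)$.

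The main obstacle is this last explicit identification: one must pin down the $G$-action on $A \mmod E[Q_0, Q_1, Q_2]$ and carry out the descent computation cleanly. A less structural alternative is to compare the Adams spectral sequence
\[ E_2^{s,t} = \Ext_A^{s,t}(H^*(\tmf;\FF_2),\FF_2) \Rightarrow \pi_{t-s}\tmf^\wedge_2 \]
against the Iwai-Shimada computation of $\Ext_{A(2)}(\FF_2,\FF_2)$ and the known homotopy $\pi_*\tmf_{(2)}$ from Section~\ref{sec:pitmf} by a rank count. However, a pure rank argument leaves open the possibility that $H^*(\tmf;\FF_2)$ is strictly larger than $A\mmod A(2)$ with the excess cancelled by Adams differentials, so some additional structural input, such as the Galois descent above, appears to be essential.
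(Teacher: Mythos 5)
The paper itself offers no proof of this statement: it is recorded as a folklore theorem ``proven rigorously in \cite{Mathew}'' and the argument is entirely delegated to that reference, so there is no in-text proof to compare against. Measured against Mathew's actual argument, your outline is in the right spirit (build an $A$-module map out of $A\mmod A(2)$, then use the relationship between $\tmf$ and $\tmf_1(3)$ as a form of $BP\langle 2\rangle$ with cohomology $A\mmod E[Q_0,Q_1,Q_2]$), but two of your steps have genuine gaps. First, the $MString$ detour for $\Sq^2 u = \Sq^4 u = 0$ is circular as stated: since $BString = BO\langle 8\rangle$ is $3$-connected with $H^4(BString;\FF_2)=\FF_2$, the groups $H^2(MString;\FF_2)$ and $H^4(MString;\FF_2)$ are too small for $\sigma^*$ to detect anything unless you already know $H^2(\tmf;\FF_2)$ and $H^4(\tmf;\FF_2)$ are (essentially) zero --- which is exactly what you are trying to prove. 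The standard route is to compute $H^i(\tmf;\FF_2)$ for $i\le 4$ directly from the Postnikov tower using $\pi_0=\ZZ$, $\pi_1=\ZZ/2\{\eta\}$, $\pi_2=\ZZ/2\{\eta^2\}$, $\pi_3=\ZZ/24\{\nu\}$, $\pi_4=0$ and the nontriviality of the first $k$-invariants; this gives $H^i(\tmf;\FF_2)=0$ for $1\le i\le 4$, and the vanishing of $\Sq^1 u,\Sq^2 u,\Sq^4 u$ follows for degree reasons.

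Second, $\tmf^{\wedge}_2 \to (\tmf_1(3))^{\wedge}_2$ is not a faithful $G$-Galois extension: the cover $\mc{M}_{ell}(\Gamma_1(3)) \to \mc{M}_{ell}$ is \'etale of degree $8$ but not Galois (its Galois closure is $\mc{M}_{ell}(\Gamma(3))$ with group $GL_2(\FF_3)$), and at the connective level even that fails. So the descent spectral sequence $H^s(G;H^t(\tmf_1(3);\FF_2))\Rightarrow H^{s+t}(\tmf;\FF_2)$ you propose does not exist in that form. What Mathew actually uses is the Hopkins--Mahowald equivalence $\tmf \wedge DA(1) \simeq (\tmf_1(3))_{(2)}$, where $DA(1)$ is an explicit $8$-cell complex, together with the Lawson--Naumann identification of $(\tmf_1(3))_{(2)}$ as a form of $BP\langle 2\rangle$; this yields $H^*(\tmf;\FF_2)\otimes H^*(DA(1);\FF_2) \cong A\mmod E[Q_0,Q_1,Q_2]$ as $A$-modules, and a counting argument (comparing Poincar\'e series, since $\dim A(2)/\dim E[Q_0,Q_1,Q_2] = 8 = \dim H^*(DA(1);\FF_2)$) then forces your map $\phi$ to be an isomorphism. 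Your closing remark --- that a pure Adams spectral sequence rank count against $\pi_*\tmf_{(2)}$ is insufficient --- is correct; the $DA(1)$ input (or some equivalent structural argument) is what closes that loophole, not Galois descent along $\tmf_1(3)$.
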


\begin{corollary}
For a spectrum $X$, the Adams spectral sequence for the $2$-adic $\tmf$-homology of $X$ takes the form
$$ \Ext^{s,t}_{A(2)}(H^*(X; \FF_2), \FF_2) \Rightarrow \pi_{t-s}(\tmf \wedge X)^{\wedge}_2. $$ 
\end{corollary}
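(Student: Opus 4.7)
The plan is to apply the mod 2 Adams spectral sequence to the spectrum $Y := \tmf \wedge X$ and then use Mathew's computation of $H^*(\tmf;\FF_2)$ together with a standard change of rings identification to reduce the $E_2$-term from an $\Ext$ over the Steenrod algebra $A$ to an $\Ext$ over $A(2)$.

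First I would write down the classical Adams spectral sequence
$$ \Ext^{s,t}_{A}(H^*(Y;\FF_2), \FF_2) \Rightarrow \pi_{t-s} Y^{\wedge}_2. $$
Since $\tmf$ is connective and has finite type mod 2 cohomology (by Mathew), $\tmf \wedge X$ is bounded below with degreewise finitely generated mod 2 cohomology for any $X$ of finite type, and the spectral sequence converges strongly to the $2$-completion; in the general case one still gets conditional convergence to $(\tmf \wedge X)^{\wedge}_2$ in the usual sense, which is what is asserted.

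Next, by the Künneth formula we have an isomorphism of $A$-modules
$$ H^*(\tmf \wedge X; \FF_2) \cong H^*(\tmf;\FF_2) \otimes_{\FF_2} H^*(X;\FF_2), $$
and by Mathew's theorem $H^*(\tmf;\FF_2) = A\mmod A(2) = A \otimes_{A(2)} \FF_2$. The key structural input is now the shearing (or ``Milnor-Moore'') isomorphism: since $A$ is free as an $A(2)$-module, there is a natural isomorphism of $A$-modules
$$ (A \otimes_{A(2)} \FF_2) \otimes_{\FF_2} H^*(X;\FF_2) \;\cong\; A \otimes_{A(2)} H^*(X;\FF_2), $$
where on the right $H^*(X;\FF_2)$ is regarded as an $A(2)$-module by restriction and $A$ acts only through its left factor.

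Finally I would apply the change-of-rings formula for $\Ext$ (Frobenius reciprocity): for any $A(2)$-module $M$,
$$ \Ext^{s,t}_{A}\bigl(A \otimes_{A(2)} M,\; \FF_2\bigr) \;\cong\; \Ext^{s,t}_{A(2)}(M, \FF_2), $$
which follows because $A \otimes_{A(2)} (-)$ is left adjoint to restriction and preserves projectives (as $A$ is $A(2)$-free). Combining these identifications with $M = H^*(X;\FF_2)$ gives the stated $E_2$-term. The main conceptual obstacle is entirely absorbed into Mathew's theorem; after that, the computation is a formal application of standard homological algebra, with convergence being the only thing that requires care and following from connectivity of $\tmf$ in the usual manner.
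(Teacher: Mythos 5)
Your proposal is correct and is exactly the standard argument the paper has in mind: the corollary is stated without proof as an immediate consequence of Mathew's theorem, via the Adams spectral sequence for $\tmf \wedge X$, the K\"unneth isomorphism, the shearing isomorphism $(A \otimes_{A(2)} \FF_2) \otimes_{\FF_2} H^*(X;\FF_2) \cong A \otimes_{A(2)} H^*(X;\FF_2)$, and change of rings. You also correctly flag the only points needing care (freeness of $A$ over $A(2)$ via Milnor--Moore, and convergence to the $2$-completion).
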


\section{Tmf from the chromatic perspective}\label{sec:chromatic}

We outline the essential algebro-geometric ideas behind chromatic homotopy theory, as originally envisioned by Morava \cite{Morava} (see also \cite{coctalos}, \cite[Ch.~9]{tmf}, \cite{Goerss}), and apply it to understand the chromatic localizations of $\Tmf$.  We will find that the pullback (\ref{eq:Tmfpullback}) used to access $\Tmf$ is closely related to its chromatic fracture square.

\subsection*{Stacks associated to ring spectra}

The perspective in this section is closely aligned with that of Mike Hopkins's lecture ``From spectra to stacks'' in \cite[Ch.~9]{tmf}.
The complex cobordism spectrum $\MU$ has a canonical complex orientation. 
To conform better to the even periodic set-up, we utilize the even periodic variant\footnote{Just as $\MU$ is the Thom spectrum of the universal virtual bundle over $BU$, $\MUP$ is the Thom spectrum of the universal virtual bundle over $BU \times \ZZ$.}
$$ \MUP := \bigvee_{i \in \ZZ} \Sigma^{2i} \MU $$
so that $\pi_0 \MUP \cong \pi_*MU$.
Quillen proved \cite{Quillen} (see also \cite{Adams}) that the associated formal group law $F_{\MUP}$ is the universal formal group law:
$$ \mr{spec}(\pi_0\MUP)(R) = \{ \text{formal group laws over $R$} \} $$  
In particular
$$ \mr{spec} (\pi_0\MUP) \rightarrow \mc{M}_{fg} $$
is a flat cover.  In fact, we have \cite{Quillen}, \cite{Adams}
\begin{align*}
 \mr{spec}(\MUP_0\MUP) 
 & = \{ \text{isomorphisms $f: F \rightarrow F'$ between formal group laws over $R$} \} \\
 & = \mr{spec}(\pi_0\MUP) \times_{\mc{M}_{fg}} \mr{spec}(\pi_0 \MUP). 
 \end{align*}

Suppose that $E$ is a complex oriented even periodic ring spectrum whose formal group law classifying map
$$ \mr{spec}(\pi_0E) \xrightarrow{F_{E}} \mc{M}_{fg}. $$
is flat.  We shall call such ring spectra \emph{Landweber exact}\index{Landweber exact ring spectrum} (see Theorem~\ref{thm:LEFT}).  The formal group law $F_E$ of such $E$ determines $E$ in the following sense: the classifying map
$$ \pi_0 \MUP \xrightarrow{F_{E}} \pi_0E $$
lifts to a map of ring spectra
$$ \MUP \rightarrow E $$
and the associated map
\begin{equation}\label{eq:LEFT}
\pi_0E \otimes_{\pi_0 \MUP} \MUP_{*}X \rightarrow E_*X
\end{equation}
is an isomorphism for all spectra $X$ \cite{Landweber}, \cite{coctalos}.

We shall say that a commutative ring spectrum $E$ is \emph{even} if 
$$ \MUP_{\mr{odd}}E = 0. $$
It follows that $\MUP \wedge E$ is an even periodic ring spectrum, and in particular is complex orientable.  Moreover, the canonical complex orientation on $\MUP$ induces one on $\MUP \wedge E$.  

We shall say that an even ring spectrum $E$ is \emph{Landweber}\index{Landweber ring spectrum} if
the associated classifying map
$$ \mr{spec}(\MUP_0E) \xrightarrow{F_{\MUP \wedge E}} \mc{M}_{fg} $$
is flat (i.e. $\MUP \wedge E$ is Landweber exact).  It follows from (\ref{eq:LEFT}) that $\MUP_0E$ is an $\MUP_0\MUP$-comodule algebra, and hence $$ \mr{spec} \MUP_0E \rightarrow \mr{spec} \MUP_0 $$ 
comes equipped with descent data to determine a stack
\begin{equation}\label{eq:X_E}
 \mc{X}_E \rightarrow \mc{M}_{fg}.
\end{equation}
We shall call $\mc{X}_E$ the \emph{stack associated to $E$}.\index{stack, associated to ring spectrum}  Let $\omega$ denote the line bundle over $\mc{M}_{fg}$ whose fiber over a formal group law $F$ is the cotangent space at the identity
$$ \omega_F = T^*_0F. $$
We abusively also let $\omega$ denote the pullback of this line bundle to $\mc{X}_E$ under (\ref{eq:X_E}).  Then an analysis similar to that of Section~\ref{sec:pitmf} (see \cite{Devalapurkar}) shows that the spectral sequence associated to the canonical Adams-Novikov resolution 
$$ E^{\wedge}_{MUP} := \mr{Tot} \left( \MUP \wedge E \Rightarrow \MUP \wedge \MUP \wedge E \Rrightarrow \cdots  \right) $$
takes the form
$$  H^s(\mc{X}_{E}, \omega^{\otimes k}) \Rightarrow \pi_{2k-s}E^{\wedge}_{MUP}. $$

\begin{example}
The spectrum $\TMF$ is Landweber \cite[Sec.~20]{Rezk}, \cite[Sec.~5.1]{Mathew}, with
$$ \mc{X}_{\TMF} = \mc{M}_{ell}. $$
The Adams-Novikov spectral sequence is the descent spectral sequence.  In fact, the computations of \cite[Sec.~20]{Rezk} also show that $\tmf$ is Landweber, with
$$ \mc{X}_{\tmf} = \mc{M}_{\mr{weier}}. $$
This is the moduli stack of \emph{Weierstrass curves},\index{Weierstrass curve} curves which locally take the form
$$  y^2 + a_1 xy + a_3 y = x^3 + a_2 x^2 + a_4x + a_6. $$
The associated Adams-Novikov spectral sequence takes the form
$$ H^s(\mc{M}_{weier}, \omega^{\otimes k}) \Rightarrow \pi_{2k-s}\tmf $$
is computed in \cite{Bauer}.  The spectra $\Tmf^{ord}_{(p)}$ and $\TMF^{ord}_{(p)}$ are also Landweber, with 
\begin{align*} 
\mc{X}_{\Tmf^{ord}_{(p)}} & = (\br{\mc{M}}^{ord}_{ell})_{\ZZ_{(p)}}. \\
\mc{X}_{\TMF^{ord}_{(p)}} & = (\mc{M}^{ord}_{ell})_{\ZZ_{(p)}}.
\end{align*}
\end{example}

Unfortunately, the spectrum $\Tmf$ is not Landweber, but the pullback (\ref{eq:Tmfpullback}) does exhibit it as a pullback of Landweber ring spectra.  The pushout of the corresponding diagram of stacks
$$
\xymatrix{
\mc{X}_{\TMF^{ord}_{(p)}} \ar[r] \ar[d] & \mc{X}_{\TMF_{(p)}} \ar[d]  \\
\mc{X}_{\Tmf^{ord}_{(p)}} \ar[r] & (\br{\mc{M}}_{ell})_{(\ZZ_{(p)})}
}
$$
motivates us to consider $(\br{\mc{M}}_{ell})_{(\ZZ_{(p)})}$ as the appropriate stack $\mc{X}_{\Tmf_{(p)}}$ over $\mc{M}_{ell}$ to associate to $\Tmf_{(p)}$.  This motivates the following definition.

\begin{definition}
We shall call a ring spectrum $E$ \emph{locally Landweber}\index{locally Landweber ring spectrum} if it is given as a homotopy limit
$$ E = \mr{holim}_{i\in \mc{I}} E_i $$
of Landweber ring spectra where $\mc{I}$ is a category whose nerve has finitely many non-degenerate simplices.\footnote{We specify this condition so that homotopy limits taken over $\mc{I}$ commute with homotopy colimits in the category of spectra --- see the proof of Proposition~\ref{prop:Landwebersmash}.} The colimit
$$ \mc{X}_{E} := \colim_i \mc{X}_{E_i} $$
is the \emph{stack associated to $E$}. 
\end{definition}

\begin{remark}
The stack $\mc{X}_E$ in the above definition a priori seems to to depend on the diagram $\{E_i\}$.  In general, the $E_2$-term of the Adams-Novikov spectral sequence for $E$ is \emph{not} isomorphic to $H^s(\mc{X}_E, \omega^{\otimes k})$ (as happens in the case of Landweber spectra).  \end{remark}

\begin{proposition}\label{prop:Landwebersmash}
Suppose that $E$ and $E'$ are locally Landweber.  Then so is $E \wedge E'$, and 
$$ \mc{X}_{E \wedge E'} \simeq \mc{X}_{E} \times_{\mc{M}_{fg}} \mc{X}_{E'}. $$
\end{proposition}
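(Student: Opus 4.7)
The plan is to reduce the locally Landweber case to the Landweber case by moving the (finite) homotopy limits outside the smash product, and then to match the stacks via the fact that fiber products of affines over $\mr{spec}(\MUP_0)$ descend to fiber products of the associated stacks over $\mc{M}_{fg}$.

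First I would settle the Landweber case: assume $E$ and $E'$ are even Landweber. Since $\MUP \wedge E$ is Landweber exact, formula (\ref{eq:LEFT}) applied to the spectrum $E'$ gives
\[ \MUP_*(E \wedge E') = (\MUP \wedge E)_*(E') \cong \MUP_0 E \otimes_{\MUP_0} \MUP_*E'. \]
Taking $\pi_0$ and using that $E'$ is even yields $\MUP_0(E \wedge E') \cong \MUP_0 E \otimes_{\MUP_0} \MUP_0 E'$ and $\MUP_{\mr{odd}}(E \wedge E')=0$. The classifying map of $F_{\MUP \wedge E \wedge E'}$ factors as the composition $\mr{spec}\, \MUP_0(E\wedge E') \to \mr{spec}\,\MUP_0E' \to \mc{M}_{fg}$ of two flat maps (the first by flat base change along the flat map for $E$), so $E \wedge E'$ is Landweber. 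At the level of stacks, the comodule algebra structure on $\MUP_0E \otimes_{\MUP_0} \MUP_0 E'$ is the product of those on the factors, so taking the quotient by the $\MUP_0\MUP$-coaction commutes with the fibered tensor product and gives $\mc{X}_{E \wedge E'} \simeq \mc{X}_E \times_{\mc{M}_{fg}} \mc{X}_{E'}$.

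Next I would extend to the locally Landweber case. Write $E = \holim_{\mc{I}} E_i$ and $E' = \holim_{\mc{J}} E'_j$ with $\mc{I},\mc{J}$ having nerves with finitely many non-degenerate simplices. The finiteness hypothesis is exactly what is needed for smash products, which are formed from homotopy colimits, to commute with these homotopy limits, yielding
\[ E \wedge E' \simeq \holim_{(i,j)\in \mc{I}\times \mc{J}} (E_i \wedge E'_j). \]
Since $\mc{I}\times \mc{J}$ again has finitely many non-degenerate simplices in its nerve and each $E_i\wedge E_j'$ is Landweber by the first step, this exhibits $E\wedge E'$ as locally Landweber. The associated stack is
\[ \mc{X}_{E\wedge E'} = \colim_{(i,j)} \mc{X}_{E_i \wedge E'_j} \simeq \colim_{(i,j)} \mc{X}_{E_i}\times_{\mc{M}_{fg}} \mc{X}_{E'_j}. \]
Finally, fiber products over a fixed base commute with colimits of stacks, so this colimit identifies with $(\colim_i \mc{X}_{E_i}) \times_{\mc{M}_{fg}} (\colim_j \mc{X}_{E'_j}) \simeq \mc{X}_E \times_{\mc{M}_{fg}} \mc{X}_{E'}$.

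The main obstacle will be the commutation of smash product with the finite homotopy limits; this is the sole place where the assumption on $\mc{I}$ is used, and it must be verified carefully (e.g.\ by induction on the cells of the nerve, reducing to the fact that smash product with a fixed spectrum preserves homotopy fiber sequences and finite products). Everything else, including the flat base-change argument and the passage from affine fibered tensor products to fibered products of stacks over $\mc{M}_{fg}$, is formal once the Landweber case is in hand.
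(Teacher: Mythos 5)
Your proposal follows the paper's own argument essentially verbatim: the Landweber case is handled by the same computation $\MUP_0(E\wedge E') \cong (\MUP\wedge E)_0(E') \cong \MUP_0E \otimes_{\MUP_0}\MUP_0E'$ via the Landweber exactness of $\MUP\wedge E$, and the locally Landweber case by commuting the finite homotopy limits past the smash product and then commuting the colimit of stacks past the fiber product over $\mc{M}_{fg}$. Your added verifications (vanishing of $\MUP_{\mathrm{odd}}(E\wedge E')$ and flatness of the classifying map by base change) are correct details that the paper leaves implicit.
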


\begin{proof}
Suppose first that $E$ and $E'$ are Landweber.  The result then follows from the fact that we have
\begin{align*}
\MUP_0(E \wedge E') & \cong (\MUP \wedge E)_0(E') \\
& \cong (\MUP \wedge E)_0 \otimes_{\MUP_0} \MUP_0(E') \\
& = \MUP_0(E) \otimes_{\MUP_0} \MUP_0(E').
\end{align*}
Now suppose that $E$ and $E'$ are locally Landweber, given as limits
\begin{align*}
E & \simeq \holim_{i \in \mc{I}} E_i, \\
E' & \simeq \holim_{j \in \mc{J}} E'_j.
\end{align*}
Then the finiteness conditions on $\mc{I}$ and $\mc{J}$ allow us to compute
\begin{align*}
\holim_{i,j} (E_i \wedge E'_j) & \simeq (\holim_i E_i) \wedge (\holim_j E'_j) \\
& \simeq E \wedge E'_j
\end{align*}
and we have
\begin{align*}
 \mc{X}_{E} \times_{\mc{M}_{fg}} \mc{X}_{E'} = & (\colim_i \mc{X}_{E_i}) \times_{\mc{M}_{fg}} (\colim_j \mc{X}_{E_j}) \\
 & = \colim_{i,j} \mc{X}_{E_i} \times_{\mc{M}_{fg}} \mc{X}_{E_j} \\
 & \simeq \colim_{i,j} \mc{X}_{E_i \wedge E'_j} \\
 & = \mc{X}_{E \wedge E'}.
 \end{align*}
\end{proof}

\subsection*{The stacks associated to chromatic localizations}

Let
$$ (\mc{M}_{fg})^{\le n}_{\ZZ_{(p)}} \subset \mc{M}_{fg} $$
denote the substack which classifies $p$-local formal group laws of height $\le n$.  Let 
$$ (\mc{M}_{fg})^{[n]}_{\ZZ_{(p)}} \subset (\mc{M}_{fg})^{\le n}_{\ZZ_{(p)}} $$ 
denote the formal neighborhood\footnote{$(\mc{M}_{fg})^{[n]}_{\ZZ_{(p)}}$ is technically a \emph{formal stack}.} of the locus of formal group laws in characteristic $p$ of exact height $n$.  

Over $\bar{\FF}_p$, any two formal groups of height $n$ are isomorphic.
Lubin and Tate showed that there is a formally affine Galois cover\footnote{The cover $\mc{X}_n$ depends on a choice of height $n$ formal group over $\FF_{p^n}$, but we suppress the role of this choice to simplify the exposition.}
$$ \mr{Spf}(\ZZ_p[\zeta_{p^n-1}][[u_1, \ldots, u_{n-1}]]) = \mc{X}_n \rightarrow (\mc{M}_{fg})^{[n]}_{\ZZ_{(p)}} $$
with profinite Galois group
$$ \GG_n = \MS_n \rtimes \mr{Gal}(\FF_{p^n}/\FF_p) $$
where $\MS_n$ is the \emph{Morava stabilizer group}.\index{Morava stablizer group}

Fix a prime $p$, and let $E(n)$ denote the $n$th Johnson-Wilson spectrum,\index{Johnson-Wilson spectrum}\index{$E(n)$} $K(n)$ the $n$th Morava $K$-theory spectrum,\index{Morava K-theory}\index{$K(n)$} and $E_n$ the $n$th Morava $E$-theory spectrum,\index{Morava E-theory}\index{$E_n$} with
\begin{align*}
\pi_* E(n) & = \ZZ_{(p)}[v_1, \ldots, v_{n-1}, v_n^{\pm}], \\
\pi_* K(n) & = \FF_p[v_n^{\pm}], \\
\pi_* E_n & = \ZZ_p[\zeta_{p^n-1}][[u_1, \ldots, u_{n-1}][u^{\pm}].
\end{align*}
Here, $\abs{v_i} = 2(p^i-1)$, $\abs{u_i} = 0$, and $\abs{u} = -2$.

For spectra $X$ and $E$, let $X_E$ denote the $E$-localization of the spectrum $X$ \cite{Bousfield}.
Then we have the following correspondence between locally Landweber spectra and associated stacks over $\mc{M}_{fg}$:
\vspace{12pt}

\begin{tabular}{c|c}
spectrum $E$ & stack $\mc{X}_E$ \\
\hline
\\
$S$ & $\mc{M}_{fg}$ \\ \\
$S_{E(n)}$ & $(\mc{M}_{fg})^{\le n}_{\ZZ_{(p)}}$  \\ \\
$S_{K(n)}$ & $(\mc{M}_{fg})^{[n]}_{\ZZ_{(p)}}$   \\ \\
$E_n$ & $\mc{X}_n$ \\ \\
\end{tabular} 
\vspace{12pt}

\begin{remark}
The spectrum $E = S_{K(n)}$ is really only Landweber in the $K(n)$-local category, in the sense that $(\MUP \wedge E)_{K(n)}$ is Landweber exact.  However, similar considerations associate formal stacks $\mc{X}_E$ to such $K(n)$-local ring spectra, and an analog of Proposition~\ref{prop:Landwebersmash} holds where
$$ \mc{X}_{(E \wedge E')_{K(n)}} \simeq \mc{X}_{E} \widehat{\times}_{\mc{M}_{fg}} \mc{X}_{E'}. $$
The spectrum $S_{E(n)}$ is a limit of spectra which are Landweber in the above $K(i)$-local sense $i \le n$ \cite{CamarenaBarthel}.
\end{remark}

The spectrum $E_n$ is in fact Landweber exact.
Galois descent is encoded in the work of Goerss-Hopkins-Miller \cite{GoerssHopkins} and Devinatz-Hopkins \cite{DevinatzHopkins}, who showed that the group $\GG_n$ acts on $E_n$ with
$$ S_{K(n)} \simeq E_n^{h\GG_n}. $$

The following proposition follows from Proposition~\ref{prop:Landwebersmash} (or its $K(n)$-local variant), and is closely related to localization forulas which appear in \cite{GreenleesMay}.

\begin{proposition}\label{prop:localizedLandweber}
Suppose that $E$ is locally Landweber.  Then so is $E_{E(n)}$ and $E_{K(n)}$, and the associated stacks are given as the pullbacks 
$$
\xymatrix{
\mc{X}_{E_{E(n)}} \ar[r] \ar[d] &
(\mc{M}_{fg})^{\le n}_{\ZZ_{(p)}} \ar[d] &
\mc{X}_{E_{K(n)}} \ar[r] \ar[d] &
(\mc{M}_{fg})^{[n]}_{\ZZ_{(p)}} \ar[d] 
\\
\mc{X}_E \ar[r] &
\mc{M}_{fg} &
\mc{X}_E \ar[r] &
\mc{M}_{fg}
}
$$
and 
$$ E_{K(n)} = (E_{E(n)})^{\wedge}_{I_n} $$
where $I_n = (p, v_1, \ldots, v_{n-1})$ is the ideal corresponding to the locus of height $n$ formal groups in $(\mc{M}_{fg})^{\le n}_{\ZZ_{(p)}}$.
\end{proposition}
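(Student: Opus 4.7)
The plan is to reduce everything to Proposition~\ref{prop:Landwebersmash} applied to the sphere localizations $S_{E(n)}$ and $S_{K(n)}$, using the correspondence spelled out in the preceding table.

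First I would handle the $E(n)$-local case. Since Bousfield localization at $E(n)$ is smashing (by Ravenel's smashing conjecture, proved by Hopkins-Ravenel), we have $X_{E(n)} \simeq X \wedge S_{E(n)}$ for every spectrum $X$. Writing a locally Landweber presentation $E \simeq \holim_{i \in \mc{I}} E_i$, the finiteness condition on $\mc{I}$ combined with the smashing property give
$$ E_{E(n)} \simeq \holim_i (E_i \wedge S_{E(n)}), $$
so it is enough to treat the Landweber case. Applying Proposition~\ref{prop:Landwebersmash} to each $E_i$ and $S_{E(n)}$ (which is locally Landweber with $\mc{X}_{S_{E(n)}} = (\mc{M}_{fg})^{\le n}_{\ZZ_{(p)}}$ by the table) produces the left pullback square, and the homotopy limit recovers $\mc{X}_{E_{E(n)}}$ as a colimit of such pullbacks, which by universal property is the pullback of $\mc{X}_E = \colim_i \mc{X}_{E_i}$.

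For the $K(n)$-local case the argument runs identically, except I would invoke the $K(n)$-local variant of Proposition~\ref{prop:Landwebersmash} described in the remark immediately preceding the statement: the $K(n)$-localized smash with $S_{K(n)}$ computes the formal pullback $\mc{X}_E \widehat{\times}_{\mc{M}_{fg}} \mc{X}_{S_{K(n)}}$. Since $\mc{X}_{S_{K(n)}} = (\mc{M}_{fg})^{[n]}_{\ZZ_{(p)}}$, this produces the right pullback square.

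Finally I would establish the completion formula $E_{K(n)} \simeq (E_{E(n)})^{\wedge}_{I_n}$. By construction $(\mc{M}_{fg})^{[n]}_{\ZZ_{(p)}}$ is the formal neighborhood of the height-exactly-$n$ locus inside $(\mc{M}_{fg})^{\le n}_{\ZZ_{(p)}}$, and that locus is the vanishing locus of the invariant ideal $I_n = (p, v_1, \ldots, v_{n-1})$. Pulling back this formal completion along $\mc{X}_{E_{E(n)}} \to (\mc{M}_{fg})^{\le n}_{\ZZ_{(p)}}$ is $I_n$-adic completion at the level of stacks, so on spectra it corresponds to $I_n$-adic completion interpreted via the tower of generalized type-$n$ Moore spectra. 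The main obstacle, and the step I would spend the most care on, is justifying this last passage from the algebro-geometric completion to the spectrum-level one: for $E(n)$-local ring spectra one needs the geometrically-defined formal completion along the height-$n$ locus to agree with homotopical $I_n$-adic completion, which is the chromatic-fracture / Hopkins-Ravenel input underlying the identity $L_{K(n)} = (-)^{\wedge}_{I_n} \circ L_{E(n)}$ on $E(n)$-local ring spectra.
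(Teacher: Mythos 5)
Your proposal is correct and follows essentially the same route as the paper, which offers no detailed proof but states that the result ``follows from Proposition~\ref{prop:Landwebersmash} (or its $K(n)$-local variant)'' and is related to the localization formulas of Greenlees--May. Your expansion --- using the smashing property of $L_{E(n)}$ to rewrite the localizations as (suitably localized) smash products with $S_{E(n)}$ and $S_{K(n)}$, whose associated stacks are $(\mc{M}_{fg})^{\le n}_{\ZZ_{(p)}}$ and $(\mc{M}_{fg})^{[n]}_{\ZZ_{(p)}}$ per the table, and then invoking $L_{K(n)} = (-)^{\wedge}_{I_n}\circ L_{E(n)}$ for the completion formula --- is exactly the intended argument.
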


For a general spectrum $X$, the square
$$
\xymatrix{
X_{E(n)} \ar[d] \ar[r] &  X_{K(n)} \ar[d] \\
X_{E(n-1)} \ar[r] & (X_{K(n)})_{E(n-1)}.
}
$$ 
is a homotopy pullback (the \emph{chromatic fracture square}).\index{chromatic fracture square}  If $X$ is a locally Landweber ring spectrum, the chromatic fracture square can be regarded as the being associated to
the ``cover''
$$ \left\{ (\mc{M}_{fg})^{\le n-1}_{\ZZ_{(p)}}, (\mc{M}_{fg})^{[n]}_{\ZZ_{(p)}} \right\}  \rightarrow (\mc{M}_{fg})^{\le n}_{\ZZ_{(p)}}. $$

\subsection*{$K(1)$-local $\Tmf$}

Applying Proposition~\ref{prop:localizedLandweber}, we find
$$ \Tmf_{K(1)} \simeq \tmf_{K(1)} \simeq (\Tmf^{ord}_{(p)})^{\wedge}_p. $$

We explain the connection of $K(1)$-local $\Tmf$ to the Katz-Serre theory of $p$-adic modular forms.

The ring of \emph{divided congruences}\index{divided congruences, ring of} is defined to be
$$ D := \left\{ \sum f_k \in \bigoplus_k MF_k \otimes \QQ \: : \: \sum f_k(q) \in \ZZ[[q]] \right\}. $$
This ring was studied extensively by Katz \cite{Katzdivided}, who showed in \cite{Katz} that there is an isomorphism
$$ D \cong \Gamma \mc{O}_{\br{\mc{M}}^{triv}_{ell}} $$
where $\br{\mc{M}}^{triv}_{ell}$ is the pullback
$$
\xymatrix{
\br{\mc{M}}^{triv}_{ell} \ar[r] \ar[d] & \br{\mc{M}}_{ell} \ar[d] \\
\mr{spec}(\ZZ) \ar[r]_{F_{\GG_m}} & \mc{M}_{fg}
}
$$

Since complex $K$-theory is the Landweber exact ring spectrum associated to $F_{\GG_m}$,  Proposition~\ref{prop:Landwebersmash} recovers the following theorem of Laures \cite{Laures}.

\begin{proposition}[Laures]
The complex $K$-theory of $\Tmf$ is given by
$$ K_0\Tmf \cong D. $$
\end{proposition}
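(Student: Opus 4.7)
The plan is to apply Proposition~\ref{prop:Landwebersmash} to the pair $(KU, \Tmf)$ and then invoke Katz's identification $D \cong \Gamma\mc{O}_{\br{\mc{M}}^{triv}_{ell}}$ cited just above the statement. First I would recall that complex $K$-theory is an even periodic Landweber ring spectrum with $\pi_0 KU = \ZZ$ and associated formal group $F_{\GG_m}$, so $\mc{X}_{KU} = \mathrm{spec}(\ZZ)$ via the classifying map $F_{\GG_m}\colon \mathrm{spec}(\ZZ) \to \mc{M}_{fg}$. Next, $\Tmf$ is locally Landweber: the rational case is handled by Proposition~\ref{prop:piTmf}, and at each prime $p$ the pullback square (\ref{eq:Tmfpullback}) exhibits $\Tmf_{(p)}$ as a homotopy pullback of Landweber ring spectra, whose associated stacks glue to give $\mc{X}_{\Tmf} = \br{\mc{M}}_{ell}$.

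Applying Proposition~\ref{prop:Landwebersmash} then gives
$$
\mc{X}_{KU\wedge \Tmf}\;\simeq\;\mc{X}_{KU}\times_{\mc{M}_{fg}}\mc{X}_{\Tmf}\;=\;\mathrm{spec}(\ZZ)\times_{\mc{M}_{fg}}\br{\mc{M}}_{ell}\;=\;\br{\mc{M}}^{triv}_{ell}.
$$
By Katz's theorem already quoted in the text, $\br{\mc{M}}^{triv}_{ell}$ is affine with $\Gamma\mc{O}_{\br{\mc{M}}^{triv}_{ell}}=D$. So if I can show that for this particular locally Landweber spectrum $\pi_0(KU\wedge \Tmf)$ agrees with the global sections of its associated stack, I am done.

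The main obstacle is exactly this last step: as the remark after the definition of locally Landweber spectra emphasizes, in general the homotopy groups of a locally Landweber spectrum are \emph{not} computed by the cohomology of its stack. Here, however, three features conspire in our favor: (i) the smash of the pullback square (\ref{eq:Tmfpullback}) with $KU$ is a pullback of even periodic Landweber spectra $KU\wedge \TMF_{(p)}$, $KU\wedge \TMF^{ord}_{(p)}$, $KU\wedge \Tmf^{ord}_{(p)}$, whose stacks are the obvious base changes of $(\mc{M}_{ell})_{(p)}$, $(\mc{M}^{ord}_{ell})_{(p)}$, $(\br{\mc{M}}^{ord}_{ell})_{(p)}$ along $F_{\GG_m}$; (ii) these base changes are affine, so the corresponding spectra are computed on $\pi_0$ by global sections; and (iii) by Katz the assembled stack $\br{\mc{M}}^{triv}_{ell}$ is itself affine, so its Čech/descent spectral sequence for $\mc{O}$ degenerates in positive cohomological degree. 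Combining the degeneration with the Mayer--Vietoris long exact sequence of the pullback square reduces the computation of $\pi_0(KU\wedge \Tmf)$ to taking global sections over $\br{\mc{M}}^{triv}_{ell}$, yielding $K_0\Tmf = D$ as claimed.
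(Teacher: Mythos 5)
Your argument follows the paper's own route exactly: the paper deduces this proposition in a single sentence by applying Proposition~\ref{prop:Landwebersmash} to $KU$ (Landweber exact with $\mc{X}_{KU} = \mathrm{spec}(\ZZ) \xrightarrow{F_{\GG_m}} \mc{M}_{fg}$) and the locally Landweber spectrum $\Tmf$, and then invoking Katz's identification $D \cong \Gamma\mc{O}_{\br{\mc{M}}^{triv}_{ell}}$. Your extra care about why $\pi_0(KU \wedge \Tmf)$ is actually computed by global sections of the associated stack --- via affineness of the base changes and degeneration of the descent spectral sequence --- correctly fills in a step the paper leaves implicit.
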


The ring of \emph{generalized $p$-adic modular functions}\index{p-adic modular forms} \cite{Katz}, \cite{Katzdivided} is the $p$-completion of the ring $D$:
$$ V_p := D^{\wedge}_p $$  
and the proposition above implies that there is an isomorphism
$$ \pi_0 (K \wedge \Tmf)^{\wedge}_p \cong V_p. $$
For $p \nd \ell$, the action of the $\ell$th Adams operation $\psi^{\ell}$ on this space coincides with the action of $\ell \in \ZZ^\times_p$ on $V$ described in \cite{Katzdivided}, and
$$ V_p\bra{k} = \{ f \in V \: : \: \psi^\ell f = \ell^k f \} $$
is isomorphic to Serre's space of $p$-adic modular forms of weight $k$.  

Letting $p$ be odd, and 
choosing $\ell$ to be a topological generator of $\ZZ_p^\times$, we deduce from the fiber sequence \cite[Lem.~2.5]{HopkinsMahowaldSadofsky}
$$ S^{2k}_{K(1)} \rightarrow K^{\wedge}_p \xrightarrow{\psi^\ell - \ell^k} K^{\wedge}_p $$
the following theorem of Baker \cite{Baker}.

\begin{proposition}
For $p \ge 3$, the homotopy groups of $\Tmf_{K(1)}$ are given by the spaces of $p$-adic modular forms
$$ \pi_{2k} \Tmf_{K(1)} \cong V_p\bra{k}. $$
\end{proposition}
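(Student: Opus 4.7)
The plan is to smash the Hopkins--Mahowald--Sadofsky fiber sequence
$$ S^{2k}_{K(1)} \to K^{\wedge}_p \xrightarrow{\psi^\ell - \ell^k} K^{\wedge}_p $$
with $\Tmf$, identify the middle terms with $V_p$ via Laures' proposition, and read off $\pi_{2k}\Tmf_{K(1)}$ from the long exact sequence in homotopy. Since $K^{\wedge}_p$ is $K(1)$-local at odd primes, so is the module spectrum $(\Tmf \wedge K)^{\wedge}_p$; thus the smashed fiber sequence
$$ \Sigma^{2k}\Tmf_{K(1)} \to (\Tmf\wedge K)^{\wedge}_p \xrightarrow{\psi^\ell - \ell^k} (\Tmf \wedge K)^{\wedge}_p $$
lives entirely in the $K(1)$-local category, with leftmost term a suspension of $\Tmf_{K(1)}$.

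First I would identify the middle $\pi_{2k}$ with $V_p$. Laures' proposition gives $K_0 \Tmf \cong D$; combining this with Bott periodicity on the $K$-factor and $p$-completion yields $\pi_{2k}(\Tmf \wedge K)^{\wedge}_p \cong V_p$ for every integer $k$. Under this identification the action of $\psi^\ell$ translates, as reviewed in the text following Katz \cite{Katzdivided}, to the action of $\ell \in \ZZ_p^\times$ on $V_p$, so the self-map becomes exactly the operator $\psi^\ell - \ell^k$ on $V_p$ whose kernel defines $V_p\bra{k}$. Because $K$ is even periodic and $\Tmf$ has even $K$-homology, $(\Tmf \wedge K)^{\wedge}_p$ has homotopy concentrated in even degrees, so the long exact sequence collapses to a short exact sequence
$$ 0 \to \pi_{2k}\Tmf_{K(1)} \to V_p \xrightarrow{\psi^\ell - \ell^k} V_p \to \pi_{2k-1}\Tmf_{K(1)} \to 0, $$
and the kernel is $V_p\bra{k}$ by definition, giving the claimed isomorphism.

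The hard part will be cleanly identifying the leftmost term of the smashed sequence with $\Sigma^{2k}\Tmf_{K(1)}$, since $K(1)$-localization is not smashing and one cannot naively commute it with $\Tmf \wedge (-)$. The key observation to nail down is that $(\Tmf \wedge K)^{\wedge}_p$, as a module spectrum over the $K(1)$-local ring spectrum $K^{\wedge}_p$, is itself $K(1)$-local; this forces the fiber of the self-map to be $K(1)$-local and hence to agree with $\Sigma^{2k}\Tmf_{K(1)}$, since the $K(1)$-localization of the fiber is pinned down by the two $K(1)$-local objects between which the map is taken. Once this is in place, the remainder of the argument is routine and reproduces Baker's calculation \cite{Baker}.
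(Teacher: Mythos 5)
Your proposal is correct and follows essentially the same route as the paper: the text deduces the proposition precisely by smashing the Hopkins--Mahowald--Sadofsky fiber sequence $S^{2k}_{K(1)} \to K^{\wedge}_p \xrightarrow{\psi^\ell - \ell^k} K^{\wedge}_p$ with $\Tmf$, identifying $\pi_0(K\wedge\Tmf)^{\wedge}_p \cong V_p$ via Laures' result together with the compatibility of $\psi^\ell$ with the $\ZZ_p^\times$-action from Katz, and reading off $\pi_{2k}$ from the resulting long exact sequence. The details you supply (evenness forcing the collapse to a short exact sequence, and the $K(1)$-locality of $(\Tmf\wedge K)^{\wedge}_p$ pinning down the fiber as $\Sigma^{2k}\Tmf_{K(1)}$) are exactly the steps the paper leaves implicit.
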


\begin{remark}
At the prime $p = 2$, Hopkins \cite{HopkinsK1} and Laures \cite{LauresK1} studied the spectrum $\Tmf_{K(1)}$, and showed that it has a simple construction as a finite cell object in the category of $K(1)$-local $E_\infty$-ring spectra.
\end{remark}

\subsection*{$K(2)$-local $\Tmf$}

An elliptic curve $C$ over a field of characteristic $p$ is called \emph{supersingular}\index{supersingular elliptic curve} if its formal group $F_{C}$ has height $2$.  Over $\br{\FF}_p$ there are only finitely many isomorphism classes of supersingular elliptic curves.  We shall let $(\mc{M}^{ss}_{ell})_{\ZZ_{(p)}}$ denote the formal neighborhood of the supersingular locus in $(\mc{M}_{ell})_{\ZZ_{(p)}}$.

Serre-Tate theory \cite{LST} implies that the following square is a pullback, and that for each supersingular elliptic curve $C$ there exists a lift:
$$
\xymatrix{
& \mc{X}_{2} \ar[d] \ar@{.>}[dl]_{\td{C}}
\\
(\mc{M}^{ss}_{ell})_{\ZZ_{(p)}} \ar[r] \ar[d] &
(\mc{M}_{fg})^{[n]}_{\ZZ_{(p)}} \ar[d]  \\
\br{\mc{M}}_{ell} \ar[r] & \mc{M}_{fg}
}
$$
By Proposition~\ref{prop:localizedLandweber}, we have
$$ \mc{X}_{\Tmf_{K(2)}} \simeq \mc{X}_{\tmf_{K(2)}} \simeq (\mc{M}^{ss}_{ell})_{\ZZ_{(p)}}. $$ 
For $p = 2,3$ there is only one supersingular curve, and $\td{C}$ is a Galois cover, with Galois group equal to 
$$ \aut(C) \rtimes \mr{Gal}(\FF_{p^2}/\FF_p) \le \GG_2 $$
where
$$ \abs{\aut(C)}  = \begin{cases} 24, & p = 2, \\ 12, & p = 3. \end{cases} $$
Since the Morava $E$-theory $E_2$ is Landweber exact with $\mc{X}_{E_2} = \mc{X}_2$, we have the following.

\begin{proposition}
For $p = 2,3$ there are equivalences
$$ \Tmf_{K(2)} \simeq \TMF_{K(2)} \simeq \tmf_{K(2)} \simeq E_2^{h\aut(C) \rtimes Gal(\FF_{p^2}/\FF_p)}. $$
\end{proposition}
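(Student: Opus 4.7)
The plan is to reduce all four spectra to sections of $\mc{O}^{top}$ over the supersingular locus, and then exhibit that locus as the quotient of $\mc{X}_2$ by a finite subgroup of $\GG_2$, after which Galois descent supplies the homotopy fixed point description.

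\textbf{Step 1: the first three equivalences.} I would apply Proposition~\ref{prop:localizedLandweber} with $n=2$ to each of the locally Landweber ring spectra $\Tmf, \TMF, \tmf$. Using the identifications $\mc{X}_{\Tmf} = \br{\mc{M}}_{ell}$, $\mc{X}_{\TMF} = \mc{M}_{ell}$, and $\mc{X}_{\tmf} = \mc{M}_{weier}$ from the preceding examples, the relevant pullback stacks are the formal completions of each along the height $2$ locus of $\mc{M}_{fg}$. The cusp of $\br{\mc{M}}_{ell}$ is a N\'eron polygon whose formal group is $\widehat{\GG}_m$ (height $1$), and the non-smooth Weierstrass curves in $\mc{M}_{weier}\setminus \mc{M}_{ell}$ have formal group $\widehat{\GG}_a$ or $\widehat{\GG}_m$; in either case, height $2$ is avoided. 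Hence all three pullbacks coincide with $(\mc{M}^{ss}_{ell})_{\ZZ_{(p)}}$, and so $\Tmf_{K(2)} \simeq \TMF_{K(2)} \simeq \tmf_{K(2)}$, all naturally equivalent to $\mc{O}^{top}((\mc{M}^{ss}_{ell})_{\ZZ_{(p)}})$.

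\textbf{Step 2: the Serre-Tate Galois cover.} For $p = 2, 3$ the unique supersingular $j$-invariant (namely $j=0$, noting that $1728 \equiv 0 \pmod{3}$) yields a single supersingular elliptic curve $C$ over $\bar{\FF}_p$, defined over $\FF_{p^2}$, with automorphism group of order $24$ and $12$ respectively. By Serre-Tate theory, the universal deformation of $C$ is controlled by the universal deformation of its formal group $F_{\bar{C}}$, which produces the lift $\td{C}:\mc{X}_2 \to (\mc{M}^{ss}_{ell})_{\ZZ_{(p)}}$ in the diagram just preceding the proposition. Both $\aut(C)$ (via its action on $F_{\bar{C}}$) and $\mr{Gal}(\FF_{p^2}/\FF_p)$ (via the Frobenius) act on the deformation space $\mc{X}_2$, and since $C$ is unique and defined over $\FF_{p^2}$, the map $\td{C}$ is a Galois cover with Galois group $H := \aut(C) \rtimes \mr{Gal}(\FF_{p^2}/\FF_p)$, realized as a finite subgroup of $\GG_2$.

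\textbf{Step 3: Galois descent.} Lurie's extension of the Goerss-Hopkins-Miller sheaf to the \'etale site of the moduli of $p$-divisible groups gives $\mc{O}^{top}(\mc{X}_2) \simeq E_2$, and the sheaf condition along the finite Galois cover $\td{C}$ identifies sections over the base with the homotopy fixed points of sections over the cover. In the $K(n)$-local setting, these homotopy fixed points are precisely the Devinatz-Hopkins spectra $E_n^{hH}$ for closed $H\le \GG_n$. Applying this to $H = \aut(C)\rtimes \mr{Gal}(\FF_{p^2}/\FF_p)$ gives
$$\Tmf_{K(2)} \simeq \mc{O}^{top}\!\bigl((\mc{M}^{ss}_{ell})_{\ZZ_{(p)}}\bigr) \simeq E_2^{h\aut(C)\rtimes \mr{Gal}(\FF_{p^2}/\FF_p)},$$
which combined with Step 1 yields all four equivalences. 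The main technical obstacle is justifying Galois descent for $\mc{O}^{top}$ along $\td{C}$; this is precisely where one invokes Lurie's theorem (to get $\mc{O}^{top}$ on the $p$-divisible groups side) together with the Devinatz-Hopkins construction of $E_n^{hH}$ as continuous homotopy fixed points. The fact that $p\in\{2,3\}$ gives a unique supersingular curve is essential, since otherwise $(\mc{M}^{ss}_{ell})_{\ZZ_{(p)}}$ would break up into several components, one for each supersingular $j$-invariant, and the right-hand side would be a product of such fixed-point spectra.
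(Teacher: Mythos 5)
Your proposal is correct and follows essentially the same route as the paper: Proposition~\ref{prop:localizedLandweber} identifies all three localizations with $\mc{O}^{top}$ of the supersingular locus (the cusps and cuspidal Weierstrass curves contributing only height $1$ or infinite height), Serre--Tate theory exhibits $\td{C}:\mc{X}_2 \to (\mc{M}^{ss}_{ell})_{\ZZ_{(p)}}$ as a Galois cover with group $\aut(C)\rtimes \mr{Gal}(\FF_{p^2}/\FF_p)$ since there is a unique supersingular curve at $p=2,3$, and Galois descent together with $\mc{O}^{top}(\mc{X}_2)\simeq E_2$ gives the homotopy fixed point description. The extra detail you supply on why the boundary strata avoid height $2$ and on the descent mechanism is consistent with what the paper leaves implicit.
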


\begin{remark}
For a maximal finite subgroup $G \le \GG_n$, the homotopy fixed point spectrum spectrum $E_2^{hG}$ is denoted $EO_n$.  Therefore the proposition above is stating that at the primes $2$ and $3$ there is an equivalence
$$ \Tmf_{K(2)} \simeq EO_2. $$
\end{remark}

Finally, we note that combining Proposition~\ref{prop:localizedLandweber} with Lemma~\ref{lem:v1} and (\ref{eq:c4}) yields the following.

\begin{proposition}\label{prop:K2TMF}
There is an isomorphism
$$ \pi_* (\TMF_{K(2)}) \cong \pi_*(\TMF)^{\wedge}_{p,c_4} $$
for $p = 2,3$.
\end{proposition}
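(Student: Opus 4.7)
The plan is to apply Proposition~\ref{prop:localizedLandweber} to $E = \TMF$ and then identify the resulting completion ideal $(p, v_1)$ with $(p, c_4)$ via Lemma~\ref{lem:v1} and equation~(\ref{eq:c4}).

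First, I would observe that the classifying map $(\mc{M}_{ell})_{\ZZ_{(p)}} \to \mc{M}_{fg}$ lands in $(\mc{M}_{fg})^{\le 2}_{\ZZ_{(p)}}$, since an elliptic curve over a field of characteristic $p$ has formal group of height $1$ or $2$. Hence $\TMF_{(p)}$ is already $E(2)$-local, and Proposition~\ref{prop:localizedLandweber} gives
$$ \TMF_{K(2)} \simeq (\TMF_{(p)})^{\wedge}_{I_2}, \qquad I_2 = (p, v_1). $$

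Next, I would compare the ideals $(p, v_1)$ and $(p, c_4)$ in $\pi_* \TMF_{(p)}$. Lemma~\ref{lem:v1} identifies $v_1$ with $-a_2$ modulo $3$ and with $a_1$ modulo $2$, while equation~(\ref{eq:c4}) gives $c_4 \equiv 16 a_2^2 \pmod 3$ and $c_4 \equiv a_1^4 \pmod 2$. Thus, modulo $p$, $c_4$ is a unit multiple of $v_1^2$ (at $p = 3$) or of $v_1^4$ (at $p = 2$). In particular each of $c_4$ and $v_1$ has a power in the ideal generated by the other modulo $p$, so
$$ \sqrt{(p, v_1)} = \sqrt{(p, c_4)} $$
as ideals of $\pi_* \TMF_{(p)}$.

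Since $I$-adic completion of a module depends only on $\sqrt{I}$, this will yield
$$ \pi_* \TMF_{K(2)} \;\cong\; (\pi_* \TMF_{(p)})^{\wedge}_{(p,v_1)} \;\cong\; (\pi_* \TMF)^{\wedge}_{(p, c_4)}, $$
as required. The main obstacle is to justify passing from the spectrum-level completion $(\TMF_{(p)})^{\wedge}_{I_2}$ of Proposition~\ref{prop:localizedLandweber} to the algebraic completion of $\pi_* \TMF_{(p)}$ at $(p, v_1)$. For this I would build the completion as the homotopy inverse limit of the tower of generalized Moore spectra $\TMF_{(p)}/(p^i, v_1^j)$ and check that the associated $\lim^1$ vanishes. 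This follows from the fact that the quotients $\pi_* \TMF_{(p)} / (p^i, v_1^j)$ are degreewise finite (as can be read off from the descent spectral sequences described in Section~\ref{sec:pitmf}, using that $c_4$ acts nilpotently on the $v_1$-torsion), so that algebraic and homotopy completion coincide in each degree.
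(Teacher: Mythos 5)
Your proposal is correct and follows essentially the same route as the paper, which obtains this proposition in one line by combining Proposition~\ref{prop:localizedLandweber} (giving $\TMF_{K(2)} \simeq (\TMF_{(p)})^{\wedge}_{I_2}$ with $I_2=(p,v_1)$) with Lemma~\ref{lem:v1} and (\ref{eq:c4}) to identify $(p,v_1)$ and $(p,c_4)$ up to radical. The extra care you take in justifying that the spectrum-level $I_2$-adic completion induces the algebraic completion on homotopy groups (via towers of generalized Moore spectra and a $\lim^1$ argument) is a detail the paper elides but is a reasonable and correct addition.
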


\subsection*{Chromatic fracture of $\Tmf$}

Since every elliptic curve in characteristic $p$ has height $1$ or $2$, we deduce that the square
$$
\xymatrix{
(\br{\mc{M}}_{ell})_{\ZZ_{(p)}} \ar[r] \ar[d] & (\mc{M}_{fg})^{\le 2}_{\ZZ_{(p)}} \ar[d]  \\
\br{\mc{M}}_{ell} \ar[r] & \mc{M}_{fg}
}  
$$
is a pullback.  We deduce from Proposition~\ref{prop:localizedLandweber} that $\Tmf_{(p)}$ and $\TMF_{(p)}$ are $E(2)$-local.\footnote{The spectrum $\tmf_{(p)}$ is not $E(2)$-local, as cuspidal Weierstrass curves in characteristic $p$ have formal groups of infinite height.}

The $p$-completion of the chromatic fracture square for $\Tmf$
$$
\xymatrix{
\Tmf^{\wedge}_p \ar[r] \ar[d] & \Tmf_{K(2)} \ar[d] \\
\Tmf_{K(1)} \ar[r] & (\Tmf_{K(2)})_{K(1)}
}
$$
therefore takes the form
\begin{equation}\label{eq:Tmfchromatic}
\xymatrix{
\Tmf^{\wedge}_p \ar[r] \ar[d] & \TMF^{\wedge}_{(p,c_4)} \ar[d] \\
(\Tmf^{ord})^{\wedge}_p \ar[r] & (\TMF^{\wedge}_{c_4}[c_4^{-1}])^{\wedge}_p
}
\end{equation}
and corresponds to the cover
$$ \{ (\mc{M}_{ell}^{ss})^{\wedge}_{p}, (\br{\mc{M}}^{ord}_{ell})^{\wedge}_{p} \} \rightarrow (\br{\mc{M}}_{ell})^{\wedge}_p.
$$
The $p$-completed chromatic fracture square for $\Tmf$ is therefore a completed version of the homotopy pullback (\ref{eq:Tmfpullback}).

\begin{remark}
For $p \ge 5$ we have
$$ v_1 = E_{p-1} $$
where $E_{p-1}$ is the normalized Eisenstein series of weight $p-1$.  We therefore have analogs of Proposition~\ref{prop:K2TMF} and (\ref{eq:Tmfchromatic}) for $p \ge 5$ where we replace $c_4$ with $E_{p-1}$.
\end{remark}

\section{Topological automorphic forms}

\subsection*{$p$-divisible groups}  

Fix a prime $p$.

\begin{definition}
A \emph{$p$-divisible group of height $n$}\index{p-divisible group} over a ring $R$ is a sequence of commutative group schemes
$$ 0 = G_0 \le G_1 \le G_2 \le \cdots $$
so that each $G_i$ is locally free of rank $p^{in}$ over $R$, and such that for each $i$ we have
$$ G_i = \ker ([p^i]:G_{i+1} \rightarrow G_{i+1}) $$
\end{definition}

\begin{example}
Suppose that $A$ is an abelian variety over $R$ of dimension $n$.  Then the sequence of group schemes $A[p^\infty] := \{A[p^i]\}$ given by the $p^i$-torsion points of $A$
$$ A[p^i] := \ker([p^i]: A \rightarrow A) $$ 
is a $p$-divisible group of height $2n$.
\end{example}

\begin{example}
Suppose that $F$ is a formal group law over a $p$-complete ring $R$ of height $n$.  Then the sequence of group schemes $F[p^\infty] = \{ F[p^i] \}$
where 
$$ F[p^i] := \mr{spec}(R[[x]]/([p^i]_F(x))) $$
is a $p$-divisible group of height $n$.
\end{example}

Given a $p$-divisible group $G = \{G_i\}$ of height $n$ over a $p$-complete ring $R$, the formal neighborhood of the identity
$$ F_G \le \colim_{i} G_i $$
is a formal group of height $\le n$ \cite{Messing}.  We define the \emph{dimension}\index{dimension, of p-divisible group} of $G$ to be the dimension of the formal group $F_G$.
We shall say $G$ is trivializable if the line bundle $T^*_0F_G$ is trivial.

If $A$ is an abelian variety of dimension $n$ over $R$, then we have
$$ F_{A[p^\infty]} = F_{A} $$
and the dimension of $A[p^\infty]$ is $n$.  

\subsection*{Lurie's theorem}

Let $\mc{M}^n_{pd}$ denote the moduli stack of $1$-dimensional $p$-divisible groups of height $n$, and let $(\mc{M}^n_{pd})^{DM}_{et}$ denote the site of formally \'etale maps
\begin{equation}\label{eq:Gclass}
\xymatrix{
 \mc{X} \xrightarrow{G} \mc{M}^n_{pd}
 }
\end{equation}
where $\mc{X}$ is a locally Noetherian separated Deligne-Mumford stack over a complete local ring with perfect residue field of characteristic $p$.

\begin{remark}\label{rmk:etale}
One typically checks that a map (\ref{eq:Gclass}) is formally \'etale by checking that for each closed point $x \in \mc{X}$, the formal neighborhood of $x$ is isomorphic to the universal deformation space of the fiber $G_x$.
\end{remark}

Lurie proved the following seminal theorem \cite{LurieI}.

\begin{theorem}[Lurie]\label{thm:Lurie}
There is a sheaf $\mc{O}^{top}$\index{$\mc{O}^{top}$} of $E_\infty$ ring spectra on $(\mc{M}^n_{pd})^{DM}_{et}$ with the following property: the ring spectrum
$$ E := \mc{O}^{top}\left( 
\mr{spec}(R) \xrightarrow{G} \mc{M}^n_{pd}\right) $$
(associated to an affine formal \'etale open with $G$ trivializable) is even periodic, with
$$ F_E = F_G. $$
\end{theorem}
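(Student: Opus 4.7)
The plan is to realize $\mc{O}^{top}$ by combining Goerss-Hopkins obstruction theory with Lubin-Tate deformation theory of $p$-divisible groups, bootstrapping off the Goerss-Hopkins-Miller construction of Morava $E$-theory as an $E_\infty$ ring spectrum. By Remark \ref{rmk:etale}, any object $\mr{spec}(R) \xrightarrow{G} \mc{M}^n_{pd}$ of the site is formally controlled at each closed point $x$ by the universal deformation of the fiber $G_x$, a $1$-dimensional $p$-divisible group of height $n$ over the perfect residue field $k(x)$. By the Serre-Tate theorem, deformations of $G_x$ reduce to deformations of its connected formal component $G_x^\circ$ (whose height $m \le n$ dictates the picture), since the étale quotient is rigid; this identifies the universal deformation ring of $G_x$ with (a finite étale extension of) the Lubin-Tate ring of $G_x^\circ$. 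The Goerss-Hopkins-Miller theorem then supplies a functorial $E_\infty$ realization — Morava $E$-theory $E_m$ — whose formal group is the universal deformation of $G_x^\circ$, and whose height $m$ formal group extends to a $p$-divisible group of height $n$ in the essentially unique way dictated by the étale complement.

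To globalize, I would produce, for each affine formal étale chart $\mr{spec}(R) \xrightarrow{G} \mc{M}^n_{pd}$, an $E_\infty$ ring $E_G$ whose underlying formal group is $F_G$. The strategy is to write $\mr{spec}(R)$ as a limit over its closed points of formal neighborhoods, apply the local Morava-$E$ construction on each piece, and then invoke Goerss-Hopkins obstruction theory to paste: obstructions to existence, uniqueness, and functoriality of $E_\infty$ refinements of a Landweber realization of $F_G$ live in a tower of topological André-Quillen cohomology groups of the form $H^s_{E_\infty}(E_G; \Omega^t E_G)$ with coefficients in shifts of the cotangent complex. The essential algebraic input — and the main technical step — is that these obstruction groups are computed, via a Goerss-Hopkins-type spectral sequence, by cohomology controlling deformations of the $p$-divisible group $G$ itself, so that the formal étaleness hypothesis on $G$ forces all obstructions to vanish. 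Conceptually, this is a relative version of the Goerss-Hopkins-Miller argument with formal groups replaced by $1$-dimensional $p$-divisible groups.

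The hard part will be exactly this vanishing: one must identify the topological André-Quillen cohomology of the presumptive $E_G$ with Dieudonné-theoretic cohomology of $G$, and exploit the formal smoothness of the Lubin-Tate tower (together with rigidity of the étale part) to conclude that every obstruction dies. Once the functor $G \mapsto E_G$ is established on affine formal étale opens, extension to the full Deligne-Mumford site $(\mc{M}^n_{pd})^{DM}_{et}$ proceeds by étale hyperdescent, which is automatic for the presheaf of $E_\infty$ rings by virtue of the same vanishing of obstruction groups along the Čech nerves of étale covers. Even periodicity of each $E_G$ and the identification $F_E = F_G$ are built into the local Lubin-Tate construction via Morava $E$-theory and are preserved under the gluing, giving the stated properties of $\mc{O}^{top}$.
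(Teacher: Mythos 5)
First, a point of comparison: the paper does not prove this theorem at all --- it is quoted as a black box from Lurie's work, and Lurie's actual argument proceeds through spectral algebraic geometry (oriented deformation rings and the spectral moduli of $p$-divisible groups), not through Goerss--Hopkins obstruction theory. So your proposal is necessarily a reconstruction rather than a match, and it should be judged on whether the sketch could be completed. I do not think it can as written, for two concrete reasons.

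The first gap is your Serre--Tate step. It is true that the \'etale quotient $G_x^{et}$ is rigid, but the extension $0 \to G_x^{\circ} \to G_x \to G_x^{et} \to 0$ is not: if $G_x$ has height $n$ and its formal part has height $m$, the universal deformation space of $G_x$ has dimension $n-1$, of which only $m-1$ dimensions come from Lubin--Tate deformations of $G_x^{\circ}$; the remaining $n-m$ parameters deform the extension class. Consequently the local model at a point of formal height $m<n$ is \emph{not} Morava $E_m$-theory, and the formal group $F_G$ over the chart is a nonuniversal height-$m$ family over an $(n-1)$-dimensional base. (This is visible already for $n=2$: at an ordinary point of $\mc{M}_{ell}$ the relevant deformation ring is one-dimensional --- the Serre--Tate coordinate --- not the zero-dimensional Lubin--Tate ring of a height-one formal group.) The second gap is that you assert the Goerss--Hopkins obstruction groups are ``computed by cohomology controlling deformations of the $p$-divisible group $G$ itself,'' so that formal \'etaleness over $\mc{M}^n_{pd}$ kills them. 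But the Goerss--Hopkins machinery sees only the formal group, i.e.\ the map $\mr{spec}(R)\to \mc{M}_{fg}$, and that map is merely flat --- not formally \'etale --- wherever the formal height drops below $n$. Transferring the obstruction-vanishing from the formal-group direction (where it fails to be automatic) to the $p$-divisible-group direction (where the \'etaleness lives) is precisely the content of the theorem; in your sketch it appears as an unproved identification of topological Andr\'e--Quillen cohomology with Dieudonn\'e-theoretic deformation cohomology. Until that identification is supplied, the argument only re-proves the height-$n$ (purely formal) case, which is the Goerss--Hopkins--Miller theorem you started from.
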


This theorem generalizes the Goerss-Hopkins-Miller theorem \cite{GoerssHopkins}.  Consider the Lubin-Tate universal deformation space
$$ \mc{X}_n \xrightarrow{\td{F}} \mc{M}^{[n]}_{fg}. $$
The map classifying the $p$-divisible group $\td{F}[p^\infty]$ 
$$ \mc{X}_n \xrightarrow{\td{F}[p^\infty]} \mc{M}^n_{pd} $$ 
is formally \'etale, simply because the data of a formal group is the same thing as the data of its associated $p$-divisible group over a $p$-complete ring, so they have the same deformations (see Remark~\ref{rmk:etale}). The associated ring spectrum is Morava $E$-theory:
$$ \mc{O}^{top}(\mc{X}_n) \simeq E_n. $$ 
The functoriality of $\mc{O}^{top}$ implies that $\GG_n$ acts on $E_n$.

Theorem~\ref{thm:Lurie} also generalizes (most of) Theorem~\ref{thm:GHM}.  Serre-Tate theory states that deformations of abelian varieties are in bijective correspondence with deformations of their $p$-divisible groups.  Again using Remark~\ref{rmk:etale}, this implies that the map
\begin{align*}
(\mc{M}_{ell})_{\ZZ_p} & \rightarrow \mc{M}^n_{fg}, \\
C & \mapsto C[p^\infty]
\end{align*}
is formally \'etale.  We deduce the existence of $\mc{O}^{top}$ on $(\mc{M}_{ell})_{\ZZ_p}$.

\subsection*{Cohomology theories associated to certain PEL Shimura stacks}

The main issue which prevents us from associating cohomology theories to general $n$-dimensional abelian varieties is that their associated $p$-divisible groups are not $1$-dimensional (unless $n = 1$, of course).

\emph{PEL Shimura stacks}\index{Shimura stack} are moduli stacks of abelian varieties with the extra structure of \underline{P}olarization, \underline{E}ndomorphisms, and \underline{L}evel structure.  We will now describe a class of PEL Shimura stacks (associated to a rational form of the unitary group $U(1,n-1)$) whose PEL data allow for the extraction of a $1$-dimensional $p$-divisible group satisfying the hypotheses of Theorem~\ref{thm:Lurie}.

In order to define our Shimura stack $\mc{X}_{V,L}$, we need to fix the following data.
\begin{align*}
F & = 
\text{quadratic imaginary extension of $\QQ$, such that $p$ splits as $u
\bar{u}$.}
\\
\mc{O}_F & = 
\text{ring of integers of $F$.}
\\
V & = 
\text{$F$-vector space of dimension $n$.}
\\
\bra{-,-} & = 
\text{$\QQ$-valued non-degenerate hermitian alternating form on $V$} \\
& \qquad \text{(i.e. $\bra{\alpha x, y} = \bra{x, \bar{\alpha} y}$ for $\alpha \in F$) of signature $(1,n-1)$.}\\
L & = 
\text{$\mc{O}_F$-lattice in $V$,
$\bra{-,-}$ restricts to give integer values on $L$,} \\
& \qquad \text{and makes $L_{(p)}$ self-dual.}
\end{align*}

Assume that $S$ is a locally Noetherian scheme on which $p$ is locally
nilpotent.
The groupoid $\mathcal{X}_{V,L}(S)$ consist of
tuples of data
$(A,i,\lambda)$, and isomorphisms of such, defined as 
follows.
\vspace{12pt}

\begin{tabular}{lp{17pc}}
$A$ & is an abelian scheme over $S$ of dimension $n$.\\ \\
$\lambda: A \rightarrow A^\vee$& is a polarization (principle at $p$),
 with Rosati involution $\dag$ on $\mr{End}(A)_{(p)}$.\\ \\
$i: \mc{O}_{F,(p)} \hookrightarrow \mr{End}(A)_{(p)}$ & is an
  inclusion of rings, satisfying $i(\bar{z}) = i(z)^\dag$.
\end{tabular}
\vspace{12pt}

We impose the following two conditions (one at $p$, one away from $p$) which basically amount to saying that the tuple $(A,i,\lambda)$ is locally modeled on $(L, \bra{-,-})$:
\begin{enumerate}
\item The splitting $p = u\bar{u}$ in $\mc{O}_F$ induces a splitting $\mc{O}_{F,p} = \mc{O}_{F,u} \times \mc{O}_{F, \bar{u}}$, and hence a splitting of $p$-divisible groups
$$ A[p^\infty] \cong A[u^\infty] \oplus A[\bar{u}^\infty]. $$
We require that $A[u^\infty]$ is $1$-dimensional.

\item
Choose a geometric point $s$ in each 
component of $S$. We require that for each of these points
there \emph{exists} an $\mc{O}_F$-linear integral uniformization
$$
\eta : \prod_{\ell \ne p}L^{\wedge}_{\ell} \xrightarrow{\cong} \prod_{\ell \ne p} T_{\ell}(A_s)
$$
(where $T_\ell A = \lim_i A[\ell^i]$ is the $\ell$-adic Tate module) which, when tensored with $\QQ$, sends $\bra{-,-}$ to an 
$(\AF^{p,\infty})^\times$-multiple of
the $\lambda$-Weil pairing.\footnote{Here, $\AF^{p,\infty} := \left( \prod_{\ell \ne p} \ZZ_\ell \right)\otimes \QQ$ are the \emph{adeles away from $p$ and $\infty$}.} 
\end{enumerate}

Given a tuple $(A, i, \lambda) \in \mc{X}_{V,L}(S)$, the conditions on $i$ and $\lambda$ imply that the polarization induces an isomorphism
\begin{equation}\label{eq:polarization}
\lambda_*: A[u^\infty] \xrightarrow{\cong} A[\bar{u}^\infty]^\vee
\end{equation}
(where $(-)^\vee$ denotes the Cartier dual).  This implies that the $p$-divisible group $A[u^\infty]$ has height $n$.  Serre-Tate theory \cite{LST} implies that deformations of an abelian variety are in bijective correspondence with the deformations of its $p$-divisible group.  The isomorphism (\ref{eq:polarization}) therefore implies that deformations of  a tuple $(A,i,\lambda)$ are in bijective correspondence with deformations of $A[u^\infty]$.  By Remark~\ref{rmk:etale}, the map
\begin{align*}
\mc{X}_{V,L} & \rightarrow \mc{M}^n_{pd}
\end{align*}
is therefore formally etale.  Applying Lurie's theorem, we obtain

\begin{theorem}[\cite{taf}]
There exists a sheaf $\mc{O}^{top}$\index{$\mc{O}^{top}$} of $E_\infty$ ring spectra on the site $(\mc{X}_{V,L})_{et}$, such that for each affine \'etale open
$$ \mr{spec}(R) \xrightarrow{(A,i,\lambda)} \mc{X}_{V,L} $$
with $A[u^\infty]$ trivializable, the associated ring spectrum
$$ E := \mc{O}^{top}(\mr{spec}(R) \xrightarrow{(A,i,\lambda)} \mc{X}_{V,L})
$$
is even periodic with
$$ F_E = F_{A[u^\infty]}. $$
\end{theorem}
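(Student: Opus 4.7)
The plan is to apply Lurie's theorem (Theorem~\ref{thm:Lurie}) to a canonical classifying map
$$ \mc{X}_{V,L} \xrightarrow{\Phi} \mc{M}^n_{pd}, \qquad \Phi(A,i,\lambda) := A[u^\infty]. $$
Once $\Phi$ is checked to be a formally \'etale map of the appropriate sort, the sheaf on $\mc{X}_{V,L}$ is simply $\Phi^* \mc{O}^{top}$, and the identification of $F_E$ follows tautologically. The substantive content of the proof is therefore to verify (i) that $\Phi$ lands in the site $(\mc{M}^n_{pd})^{DM}_{et}$, (ii) that $\Phi$ is formally \'etale, and (iii) that for trivializable $A[u^\infty]$, the formal group of $E$ is $F_{A[u^\infty]}$.

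For (i), I would first note that $\Phi(A,i,\lambda)$ is $1$-dimensional by the defining condition on $A[u^\infty]$. To see its height is $n$, use the polarization: condition (1) on $\mc{X}_{V,L}$ gives a $p$-divisible splitting $A[p^\infty] \simeq A[u^\infty] \oplus A[\bar u^\infty]$, and the Rosati involution exchanges the two factors so that $\lambda$ induces an isomorphism $A[u^\infty] \xrightarrow{\cong} A[\bar u^\infty]^\vee$. Since $A[p^\infty]$ has height $2n$, each summand has height $n$. To land $\Phi$ in the Deligne-Mumford site of Lurie's theorem, I would either (a) pass to a level-structure cover $\mc{X}_{V,L}(K^p)$ for sufficiently deep compact open $K^p \le GU(V)(\AF^{p,\infty})$ which, by the Kottwitz-style representability results used in \cite{taf}, is a locally Noetherian separated DM stack, then build $\mc{O}^{top}$ on $\mc{X}_{V,L}$ by Galois descent under $GU/K^p$; or (b) directly verify that after $p$-completion $\mc{X}_{V,L}$ is Deligne-Mumford by the usual argument that $\aut(A,i,\lambda)$ injects into $\GL(T_\ell A)$ for $\ell \ne p$. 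The complete-local-ring hypothesis is handled formal-locally at each geometric point.

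For (ii), I would invoke Serre-Tate theory \cite{LST}: over a $p$-complete base the deformation theory of an abelian scheme is equivalent to the deformation theory of its $p$-divisible group. Combined with the polarization isomorphism $A[u^\infty] \cong A[\bar u^\infty]^\vee$, this says deformations of the entire tuple $(A,i,\lambda)$ are determined by deformations of $A[u^\infty]$ alone (the action of $\mc{O}_{F,(p)}$ is rigid once split by $u, \bar u$, and the away-from-$p$ uniformization is locally constant). By Remark~\ref{rmk:etale}, this is exactly the criterion for $\Phi$ to be formally \'etale. This is the step I expect to be the main obstacle, because the careful bookkeeping needs to show that the entire PEL datum is recovered, not just $A[u^\infty]$; the polarization and $\mc{O}_F$-action must be reconstructed from the $p$-divisible piece together with the rigidifying integral uniformization away from $p$.

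Finally, for (iii), set $E := (\Phi^*\mc{O}^{top})(\mr{spec}(R) \to \mc{X}_{V,L})$ which by construction equals $\mc{O}^{top}(\mr{spec}(R) \xrightarrow{A[u^\infty]} \mc{M}^n_{pd})$. Under the trivializability hypothesis on $A[u^\infty]$, Lurie's theorem gives that $E$ is even periodic with $F_E = F_{A[u^\infty]}$, which is the desired conclusion. The sheaf property on $(\mc{X}_{V,L})_{et}$ is inherited from the sheaf property of $\mc{O}^{top}$ on $(\mc{M}^n_{pd})^{DM}_{et}$ via the formal \'etaleness of $\Phi$ (pullback of covers to covers), so no additional descent argument is required beyond what is already in Lurie's theorem.
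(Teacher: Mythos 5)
Your proposal follows essentially the same route as the paper: deduce height $n$ and $1$-dimensionality of $A[u^\infty]$ from the polarization isomorphism $\lambda_*: A[u^\infty] \xrightarrow{\cong} A[\bar{u}^\infty]^\vee$ and condition (1), use Serre--Tate theory together with that isomorphism to identify deformations of $(A,i,\lambda)$ with deformations of $A[u^\infty]$ (hence formal \'etaleness via Remark~\ref{rmk:etale}), and then apply Lurie's theorem. The extra care you take about $\mc{X}_{V,L}$ being a locally Noetherian separated Deligne--Mumford stack, and about recovering the full PEL datum rather than just $A[u^\infty]$, is detail the paper delegates to \cite{taf} but is correctly placed.
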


The spectrum of \emph{topological automorphic forms} (TAF)\index{topological automorphic forms} for the Shimura stack $\mc{X}_{V,L}$ is defined to be the spectrum of global sections
$$ \TAF_{V,L} := \mc{O}^{top}(\mc{X}_{V,L}). $$\index{$\TAF$}
Let $\omega$ be the line bundle over $\mc{X}_{V,L}$ with fibers given by
$$ \omega_{A,i,\lambda} = T_0^*F_{A[u^\infty]}. $$
Then the construction of the descent spectral sequence (\ref{eq:dss}) goes through verbatim to give a descent spectral sequence
$$ E_2^{s,2k} = H^s(\mc{X}_{V,L}, \omega^{\otimes k}) \Rightarrow \pi_{2k-s}\TAF_{V,L}. $$
The motivation behind the terminology ``topological automorphic forms'' is that the space of sections
$$ AF_k(U(V), L)_{\ZZ_p} := H^0(\mc{X}_{V,L}, \omega^{\otimes k}) $$
is the space of scalar valued weakly holomorphic automorphic forms for the unitary group $U(V)$ (associated to the lattice $L$) of weight $k$ over $\ZZ_p$.

\begin{remark}
Similar to the modular case, the space of holomorphic automorphic forms has an additional growth condition which is analogous to the requirement that a modular form be holomorphic at the cusp.  The term ``weakly holomorphic'' means that we drop this requirement.  However, for $n \ge 3$, it turns out that every weakly holomorphic automorphic form is holomorphic \cite[Sec.~5.2]{Shimura}. 
\end{remark}

The spectra $\TAF_{V,L}$ are locally Landweber, with
$$ \mc{X}_{\TAF_{V,L}} \simeq \mc{X}_{V,L}. $$
The height of the formal groups $F_{A[u^\infty]}$ associated to mod $p$ points $(A,i,\lambda)$ of the Shimura stack $\mc{X}_{V,L}$ range from $1$ to $n$.  We deduce from Proposition~\ref{prop:localizedLandweber} that $\TAF_{V,L}$ is $E(n)$-local, and an analysis similar to that in the $\Tmf$ case (see Section~\ref{sec:chromatic}) yields the following.

\begin{proposition}[\cite{taf}]
The $K(n)$-localization of $\TAF_{V,L}$ is given by
$$ (\TAF_{V,L})_{K(n)} \simeq \left( \prod_{(A,i,\lambda)} E^{h\aut(A,i,\lambda)}_n \right)^{h\mr{Gal}} $$
where the product ranges over the (finite, non-empty) set of mod $p$ points $(A,i,\lambda)$ of $\mc{X}_{V,L}$ with $F_{A[u^\infty]}$ of height $n$.
\end{proposition}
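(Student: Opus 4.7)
The plan is to mirror the computation of $\Tmf_{K(2)}$ carried out in Section~\ref{sec:chromatic}, combining the local-Landweber machinery of Proposition~\ref{prop:localizedLandweber} with Lurie's theorem and the Lubin--Tate Galois cover $\mc{X}_n \to (\mc{M}_{fg})^{[n]}_{\ZZ_{(p)}}$.

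First, since $\TAF_{V,L}$ is locally Landweber with $\mc{X}_{\TAF_{V,L}} \simeq \mc{X}_{V,L}$, the $K(n)$-local version of Proposition~\ref{prop:localizedLandweber} identifies the stack associated to $(\TAF_{V,L})_{K(n)}$ as the pullback
$$
\mc{X}_{(\TAF_{V,L})_{K(n)}} \simeq \mc{X}_{V,L} \widehat{\times}_{\mc{M}_{fg}} (\mc{M}_{fg})^{[n]}_{\ZZ_{(p)}},
$$
that is, the formal neighborhood $\mc{X}_{V,L}^{[n]}$ in $\mc{X}_{V,L}$ of the locus of mod-$p$ points $(A,i,\lambda)$ for which $F_{A[u^\infty]}$ has exact height $n$.

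Next, I would cover $\mc{X}_{V,L}^{[n]}$ by copies of the Lubin--Tate space $\mc{X}_n$. The splitting $p = u\bar{u}$ gives $A[p^\infty] \cong A[u^\infty] \oplus A[\bar{u}^\infty]$, and the polarization isomorphism $\lambda_* : A[u^\infty] \xrightarrow{\cong} A[\bar{u}^\infty]^\vee$ of (\ref{eq:polarization}) reduces the deformation theory of $(A,i,\lambda)$ to that of the one-dimensional $p$-divisible group $A[u^\infty]$ via Serre--Tate theory. When $F_{A[u^\infty]}$ has exact height $n$, the deformation space is precisely $\mc{X}_n$. Consequently, over $\br{\FF}_p$,
$$
\mc{X}^{[n]}_{V,L, \br{\FF}_p} \simeq \coprod_{(A,i,\lambda)} \mc{X}_n / \aut(A,i,\lambda),
$$
equipped with a residual $\mr{Gal}(\br{\FF}_p/\FF_p)$-action permuting the components. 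Applying the functoriality of $\mc{O}^{top}$ from Theorem~\ref{thm:Lurie}, together with Lurie's identification $\mc{O}^{top}(\mc{X}_n) \simeq E_n$ and the Devinatz--Hopkins fact that $\mc{O}^{top}(\mc{X}_n/G) \simeq E_n^{hG}$ for finite $G \le \GG_n$, followed by taking Galois descent, then yields the claimed formula.

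The main obstacle is the geometric claim that the set of $\br{\FF}_p$-isomorphism classes appearing in $\mc{X}^{[n]}_{V,L}$ is both \emph{finite} and \emph{non-empty}. Finiteness follows from the Newton stratification on $\mc{X}_{V,L}$: the exact-height-$n$ stratum is a closed zero-dimensional substack and $\mc{X}_{V,L}$ is of finite type over $\ZZ_{(p)}$, so this stratum is represented by a finite set of geometric points. Non-emptiness, on the other hand, is a genuinely arithmetic statement --- one must exhibit a polarized abelian variety over $\br{\FF}_p$ with the prescribed $\mc{O}_F$-action whose $u$-component has height exactly $n$ --- and in \cite{taf} this is established by invoking the Honda--Tate/Kottwitz classification of mod-$p$ points of PEL Shimura stacks of type $U(1,n-1)$. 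Granting these inputs, the argument otherwise proceeds by direct translation of the $\TMF_{K(2)}$ case treated earlier.
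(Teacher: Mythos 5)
Your proposal is correct and follows essentially the same route the paper (and \cite{taf}) takes: identify $\mc{X}_{(\TAF_{V,L})_{K(n)}}$ via the $K(n)$-local variant of Proposition~\ref{prop:localizedLandweber} as the formal neighborhood of the height-$n$ locus, use Serre--Tate theory and the polarization isomorphism to recognize that neighborhood as a finite disjoint union of quotients $\mc{X}_n/\aut(A,i,\lambda)$ with a residual Galois action, and apply the functoriality of $\mc{O}^{top}$ together with $\mc{O}^{top}(\mc{X}_n)\simeq E_n$ and Galois descent --- exactly the argument used for $\Tmf_{K(2)}$ in Section~\ref{sec:chromatic}. You also correctly isolate the one genuinely arithmetic input (finiteness and non-emptiness of the basic locus), which is where \cite{taf} must be invoked.
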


The groups $\aut(A,i,\lambda)$ are finite subgroups of the Morava stabilizer group.  The structure of these subgroups, and the conditions under which they are maximal finite subgroups, is studied in \cite{BehrensHopkins}.

\section{Further reading}\label{sec:further}

\begin{description}
\item{\bf Elliptic genera:} One of the original motivations behind $\tmf$ was Ochanine's definition of a genus of spin manifolds which takes values in the ring of modular forms for $\Gamma_0(2)$, which interpolates between the $\widehat{A}$-genus and the signature.  Witten gave an interpretation of this genus in terms of $2$-dimensional field theory, and produced a new genus (the \emph{Witten genus})\index{Witten genus} of \emph{string manifolds}\index{string manifold} valued in modular forms for $SL_2(\ZZ)$ \cite{WittenI}, \cite{WittenII}.  These genera were refined to an orientation of elliptic spectra by Ando-Hopkins-Strickland \cite{AHS}, and were shown to give $E_\infty$ orientations
\begin{align*}
MString & \rightarrow \tmf, \\
MSpin & \rightarrow \tmf_0(2)
\end{align*}
by Ando-Hopkins-Rezk \cite{AHR} and Wilson \cite{Wilson}, respectively.
 
\item{\bf Geometric models:} The most significant outstanding problem in the theory of topological modular forms is to give a geometric interpretation of this cohomology theory (analogous to the fact that $K$-theory classes are represented by vector bundles).  Motivated by the work of Witten described above, Segal proposed that $2$-dimensional field theories should represent $\tmf$-cocycles \cite{Segal}.  This idea has been fleshed out in detail by Stolz and Teichner, and concrete conjectures are proposed in \cite{StolzTeichner}. 

\item{\bf Computations of the homotopy groups of $\TMF_0(N)$:} Mahowald and Rezk computed the descent spectral sequence for $\pi_*\TMF_0(3)$ in \cite{MahowaldRezk}, and a similar computation of $\pi_*\TMF_0(5)$ was performed by Ormsby and the author in \cite{BehrensOrmsby} (see also \cite{HHR}).  Meier gave a general additive description of $\pi_*\TMF_0(N)^{\wedge}_2$ for all $N$ with $4 \not\vert \phi(N)$ in \cite{Meier}.  

\item{\bf Self-duality:} Stojanoska showed that Serre duality for the stack $\br{\mc{M}}_{ell}$ lifts to a self-duality result for $\Tmf[1/2]$.  This result was extended to $\Tmf_1(N)$ by Meier \cite{Meier}.

\item{\bf Detection of the divided $\beta$-family:} Adams used $K$-theory to define his $e$-invariant, and deduced that the order of the image of the $J$-homomorphism in degree $2k-1$ is given by the denominator of the Bernoulli number $\frac{B_k}{2k}$.  The divided $\beta$-family, a higher chromatic generalization of the image of $J$, was constructed on the $2$-line of the Adams-Novikov spectral sequence by Miller-Ravenel-Wilson \cite{MillerRavenelWilson}.  Laures used $\tmf$ to construct a generalization of the $e$-invariant, called the $f$-invariant \cite{Laures}.  This invariant relates the divided beta family to certain congruences between modular forms \cite{Behrenscong}, \cite{BehrensLaures}.  

\item{\bf Quasi-isogeny spectra:}  The author showed that the Goerss-Henn-Mahowald-Rezk resolution of the $3$-primary $K(2)$-local sphere  \cite{GHMR} can be given a modular interpretation in terms of isogenies of elliptic curves \cite{Behrens}, and conjectured that something similar happens at all primes \cite{Behrensbldg}.

\item{\bf The tmf resolution:} Generalizing his seminal work on ``$\bo$-resolutions,'' Mahowald initiated the study of the $\tmf$-based Adams spectral sequence.  This was used in \cite{BHHMI} to lift the $192$-periodicity in $\tmf_{(2)}$ to a periodicity in the $2$-primary stable homotopy groups of spheres, and in \cite{BHHMII} to show coker J is non-trivial in ``most'' dimensions less than 140.  The study of the $\tmf$-based Adams spectral sequence begins with an analysis of the ring of cooperations $\tmf_*\tmf$.  With $6$ inverted, this was studied by Baker and Laures \cite{Bakertmfcoop}, \cite{Laures}.  The $2$-primary structure of $\tmf_*\tmf$ was studied in \cite{BOSS}.

\item{\bf Dyer-Lashof operations:}  Ando observed that power operations for elliptic cohomology are closely related to isogenies of elliptic curves \cite{Ando}.  Following this thread, Rezk used the geometry of elliptic curves to compute the Dyer-Lashof algebra for the Morava $E$-theory $E_2$ at the prime $2$ \cite{RezkDL}.  This was generalized by Zhu to all primes \cite{ZhuDL}.  Using Rezk's ``modular isogeny complex'' \cite{RezkMIC}, Zhu was able to derive information about unstable homotopy groups of spheres \cite{Zhu}.

\item{\bf Spectral algebraic geometry:}  As mentioned in the introduction, Lurie introduced the notion of \emph{spectral algebraic geometry},\index{spectral algebraic geometry} and used it to give a revolutionary new construction of $\tmf$ \cite{Lurie} (see also \cite{LurieI} and \cite{LurieII}).

\item{\bf Equivariant TMF:} Grojnowski introduced the idea of complex analytic \emph{equivariant elliptic cohomology}\index{equivariant elliptic cohomology} \cite{Grojnowski}.  This idea was refined in the rational setting by Greenlees \cite{Greenlees}.  Lurie used his spectral algebro-geometric construction of $\TMF$ to construct equivariant $\TMF$ (this is outlined in \cite{Lurie}, see \cite{LurieI} and \cite{LurieII} for more details).

\item{\bf K3 cohomology:} Morava and Hopkins suggested that cohomology theories should be also be associated to K3 surfaces.  Szymik showed this can be done in \cite{Szymik}. \index{K3 cohomology}

\item{\bf Computations of $\pi_*\TAF$:} Very little is known about the homotopy groups of spectra of topological automorphic forms, for the simple reason that, unlike the modular case, very few computations of rings of classical integral automorphic forms exist in the literature.  Nevertheless, special instances have been computed in \cite{taf}, \cite{HillLawsonTAF}, \cite{BehrensLawson}, \cite{LawsonNaumann}, \cite{LawsonTAF}, \cite{vBT1}, \cite{vBT2}.
\end{description}
  
\bibliographystyle{amsalpha}
\nocite{*}
\bibliography{ch1}



\printindex

\end{document}